\documentclass[11pt]{article}
\usepackage{amsmath,amsthm,amsfonts,amscd,bezier}
\usepackage{amssymb}
\usepackage{enumerate}
\usepackage[T1]{fontenc}
\usepackage[usenames]{color}

\newtheorem{lemma}{Lemma}[section]

\newtheorem{theorem}[lemma]{Theorem}
\newtheorem{remark}[lemma]{Remark}
\newtheorem{proposition}[lemma]{Proposition}
\newtheorem{corollary}[lemma]{Corollary}
\newtheorem{definition}[lemma]{Definition}
\newtheorem{example}[lemma]{Example}

\usepackage{hyperref}

\hypersetup{colorlinks=true,linkcolor=blue,filecolor=blue,urlcolor=blue,citecolor=blue}

\setlength{\topmargin}{-1cm}\setlength{\textwidth}{16cm}
\setlength{\oddsidemargin}{0cm} \setlength{\textheight}{23cm}
\setlength{\evensidemargin}{0cm}

\title{{\bf Toric varieties with isolated singularity and smooth normalization}}

\author{Thaís Maria Dalbelo, Maria Elenice Rodrigues Hernandes and \\  Maria Aparecida Soares Ruas\thanks{T. M. Dalbelo is partially supported by FAPESP grants 2019/21181-0 and 2024/22060-0 and by CNPq grant 403959/2023-3. M. E. Rodrigues Hernandes is partially supported by FAPESP grant 2019/07316-0 and CNPq grant 407454/2023-3, and M. A. S. Ruas is partially supported by CNPq grants 407454/2023-3 and 305695/2019-3 and by FAPESP grant 2019/21181-0.}}

\date{}
\begin{document}
\maketitle

\markboth{T. M. Dalbelo and M. E. Rodrigues
Hernandes and M. A. S. Ruas}{Toric varieties with isolated singularity and smooth normalization}

\begin{center} 2020 Mathematics Subject Classification: 14B05; 32S05; 13F65; 14M25
	
keywords: toric varieties, smooth normalization, semigroup. \end{center}

\noindent {}

\begin{abstract}
In this work, we describe a prenormal form for the generators of the semigroup of a toric variety $X \subset \mathbb{C}^p$ with isolated singularity at the origin and smooth normalization. A complete description of the semigroup is given when $X$ is a variety of dimension $n$ in $\mathbb{C}^{2n}$. Moreover, for toric surfaces in $\mathbb{C}^4$, we provide a set of generators of the ideal $I$ defining $X$.
\end{abstract}

\section{Introduction}

Toric varieties are algebraic varieties defined by prime ideals generated by binomials. Several properties of binomial ideals are explored by Eisenbud and Sturmfels in \cite{ES}, and this theory became an useful tool for studying toric varieties (see \cite{F}, \cite{Cox}, \cite{O}, and \cite{BSR}).

A relevant aspect in the study of the toric varieties is the fact that many difficult questions for general varieties admit concrete solutions in the toric case, using the combinatorics that reside in these varieties. In the affine case, this combinatorics comes from a semigroup  associated with the action of the torus. 

The aim of this work is the study of non normal toric varieties with isolated singularity at the origin whose normalization is smooth, based on their semigroup classification. For this approach we combine tools from the global theory of toric varieties with the local theory of singular map-germs. In Proposition \ref{embedding}, we prove that if $X \subset \mathbb{C}^p$ is an $s$-dimensional variety (not necessarily toric) with isolated singularity admitting smooth normalization $f:\mathbb{C}^s \to X$, then $p\geq 2s$. For $p=2s$, we get a complete classification of the semigroup for this class of toric variety (Theorem \ref{global-2s}). More precisely, an $s$-dimensional toric variety $X(S)\subset \mathbb{C}^{2s}$ has isolated singularity at the origin and smooth normalization if and only if the semigroup $S$ is generated (up to isomorphism) by the set $$\{e_1, \ldots, e_{s-1},\lambda_1e_1+e_s, \ldots,\lambda_2e_2+e_s,\ldots, \lambda_{s-1}e_{s-1}+e_s,ne_s,me_s\},$$ where $\lambda_i,n,m \in \mathbb{N}$, $gcd(n,m)=1$ and $\{e_1, \ldots, e_s\}$ is the canonical basis of $\mathbb{Z}^s$, for $i \in \{1, \ldots, s\}$. For $p>2s$ we obtain a prenormal form for the semigroup $S$ (Theorem \ref{global} for toric surfaces and Theorem \ref{global-s-p} for toric varieties of higher dimension). 

In particular, we prove that a toric surface $X(S)$ in $\mathbb{C}^4$ has isolated singularity and smooth normalization if and only if $S$ is generated by the set $\{(1,0),(\lambda, 1),(0,n),(0,m) \}$ with $\lambda, n,m \in \mathbb{N}$ with $gcd(n,m)=1$. Based on this normal form of the semigroup, we also provide a set of generators of the ideal $I_S$ defining $X(S)$, using the classical process of successive division of $n$ and $m$. Our motivation was the work of Riemenschneider in \cite{Rie2} on the study of normal toric surfaces. It is well known that for affine normal toric varieties, the semigroup can be given as $S = {\check{\sigma}} \cap \mathbb{Z}^s$, where $\check{\sigma} \subset \mathbb{R}^s$ is the dual of a strongly convex cone $\sigma \subset \mathbb{R}^s$. Moreover there exist $p>0$ and $0 \leq q < p$ co-prime integers such that $\sigma$ is generated by the vectors $e_2$ and $pe_1 - qe_2$, where $\{e_1, e_2\}$ are the vectors of the canonical basis of $\mathbb{R}^2$. Many geometrical, topological and algebraic properties of $X(S)$ can be described by the combinatorics coming from this normal form.  Riemenschneider in \cite{Rie2} explicitly calculates a set of minimal generators for the ideal $I_S$ of a normal toric surface $X(S)$, using a process of division by continued fractions of $p/(p-q)$. See also \cite{RGBT, DanielMartinez, ES} for recent results related to this work.

The paper is structured as follows. In Section $2$, we present the basic concepts of toric varieties, their normalization, and singular map-germs. In Section $3$, we give a prenormal form for the semigroup of toric varieties with isolated singularity having smooth normalization, and a normal form for some classes of these varieties. In Section $4$, we use the normal form of the semigroup of a toric surface in $\mathbb{C}^4$ (Theorem \ref{global-2s}) and its combinatorics to compute a generating set of the ideal defining such surfaces. 

\section{Basic notions and results}

For the convenience of the reader and to fix some notations, we first review some general facts.

\subsection{Toric Varieties}

In what follows, we define toric variety and give necessary conditions to obtain the normalization of an affine toric variety. For more details see, for instance, \cite{Cox, F, O}.

Let $S \subset \mathbb{Z}^s$ be a finitely generated semigroup with respect to addition, satisfying $0 \in S$ and $\mathbb{Z}S = \mathbb{Z}^s$.

Denote by $\mathbb{C}[z, z^{-1}]$ the Laurent polynomial ring $\mathbb{C}[z_1, \dots, z_s, z_1^{-1}, \dots, z_s^{-1}]$. We define the support of $f = \sum \lambda_a z^a \in \mathbb{C}[z, z^{-1}]$ by $supp(f) = \{ a \in \mathbb{Z}^s;  \ \ \lambda_a \neq 0\}$. The set
$$
\mathbb{C}[S] = \{f \in \mathbb{C}[z, z^{-1}]; \ \ supp(f) \subset S \}
$$ is an integral domain and finitely generated as a $\mathbb{C}$-algebra.

\begin{definition}
	The $s$-dimensional affine toric variety $X(S)$ is defined by the maximal spectrum of $\mathbb{C}[S]$, that is, $X(S)=\rm{Spec}(\mathbb{C}[S])$.
\end{definition}

The affine toric variety $X(S)$ admits a natural action of the algebraic torus $T = (\mathbb{C}^*)^s$. This action has an open and dense orbit (in the Zariski topology), denoted by $\mathcal{O}$, 
which is homeomorphic to the torus $T$. Denote by $C(S)=\mathbb{R}_{\geq 0} S$ the cone generated by $S$ in ${\mathbb{R}}^s$, and assume that $C(S)$ is a strongly convex polyhedral cone in ${\mathbb{R}}^s$ $(C(S)\cap \{-C(S)\}=\{0\})$ with $dim C(S) = s$. Thus, this action has an unique $0$-dimensional orbit, which is the origin.

Let $\{\gamma_1,\dots,\gamma_p\} \subset \mathbb{Z}^s$ be a set of generators of the semigroup $S$. This set induces a homomorphism of groups $\pi_{S} : \mathbb{Z}^p \to \mathbb{Z}^s$ given by
$$
\pi_{S}(\alpha_1,\dots,\alpha_p) = \alpha_1 \gamma_1 + \dots + \alpha_p \gamma_p.
$$

Given $\alpha = (\alpha_1, \dots, \alpha_p) \in Ker(\pi_{S})$ we set 
$$
{\alpha}_+ = \displaystyle \sum_{\alpha_i > 0} \alpha_i e_i \ \ \ \ \text{and} \ \ \ \ {\alpha}_{-} = -\displaystyle \sum_{\alpha_i < 0} \alpha_i e_i,
$$
where $e_1, \dots, e_p$ are the elements of the standard basis of $\mathbb{R}^p$. Let us observe that $\alpha = \alpha_{+} - \alpha_{-}$ and $\alpha_{+}, \alpha_{-} \in \mathbb{N}^p$.

Consider the ideal 
\begin{equation}\label{nucleo}
	I_S = \langle x^{\alpha_{+}} - x^{\alpha_{-}}; \ \ \alpha \in Ker(\pi_{S}) \rangle \subset \mathbb{C}[x_1,\dots,x_p],
\end{equation}
where $x^{\beta} = x_1^{\beta_1} \dots x_p^{\beta_p}$ with $\beta = (\beta_1, \dots, \beta_p) \in \mathbb{N}^p$. The ideal $I_S$ is a prime ideal generated by binomials (see \cite[Proposition $1.1.11$]{Cox}). Moreover, $X(S) $ is the irreducible affine variety $\mathcal{V}(I_S)$ defined by the zero set of $I_S$, which is not necessarily
normal. 

\begin{definition}
	A semigroup $S \subset \mathbb{Z}^s $ is saturated if for all $k \in \mathbb{N}\setminus\{0\}$ and
	$m \in \mathbb{Z}^s$, $km \in S$ implies $m \in S$.
\end{definition}

For instance, if $\sigma \subset \mathbb{R}^s$ is a strongly convex rational polyhedral cone, then the semigroup $S_{\sigma} = \check{\sigma} \cap \mathbb{Z}^s$ is saturated, where $\check{\sigma}$ is the dual cone of $\sigma$.

In the next theorem, we can see the relation between normal toric varieties and strongly convex rational polyhedral cones. A proof of this result can be found in \cite[Theorem $1.3.5$]{Cox}.

\begin{theorem}
	Let $X(S)$ be an affine toric variety. Then the following are equivalent:
	\begin{itemize}
		\item[1)] $X(S)$ is normal;
		
		\item[2)] $S \subset \mathbb{Z}^s$ is a saturated semigroup;
		
		\item[3)] $X(S) = \rm{Spec}(\mathbb{C}[S])$, where $S= \check{\sigma} \cap \mathbb{Z}^s$ and $ \sigma \subset \mathbb{R}^s$ is a strongly convex
		rational polyhedral cone.
	\end{itemize}
\end{theorem}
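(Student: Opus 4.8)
The plan is to establish the two equivalences $(1)\Leftrightarrow(2)$ and $(2)\Leftrightarrow(3)$ separately, using at every step the standing hypothesis $\mathbb{Z}S=\mathbb{Z}^s$, which identifies the field of fractions of $\mathbb{C}[S]$ with the function field $\mathbb{C}(z_1,\dots,z_s)$ of the torus $T$.

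The equivalence $(2)\Leftrightarrow(3)$ is purely convex-geometric. If $S=\check\sigma\cap\mathbb{Z}^s$, then $S$ is saturated because $\check\sigma$ is a cone: $km\in\check\sigma$ with $k\geq 1$ already forces $m\in\check\sigma$, and $m\in\mathbb{Z}^s$ by hypothesis (finite generation being Gordan's lemma). Conversely, if $S$ is saturated, and, as in the standing setup, $C(S)=\mathbb{R}_{\geq 0}S$ is strongly convex of dimension $s$, I would put $\sigma:=C(S)^{\vee}$; since $C(S)$ is a rational polyhedral cone, biduality gives $\check\sigma=C(S)$, while full-dimensionality and strong convexity of $C(S)$ translate into strong convexity and full-dimensionality of $\sigma$. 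Then $S=\check\sigma\cap\mathbb{Z}^s$: one inclusion is clear, and for the other, any $m\in C(S)\cap\mathbb{Z}^s$ is a nonnegative \emph{rational} combination of the generators of $S$, so $km\in S$ for some $k\geq 1$, hence $m\in S$ by saturation; by definition this yields $X(S)=\mathrm{Spec}(\mathbb{C}[S])$ of the required form.

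For $(2)\Rightarrow(1)$, write $\check\sigma=\bigcap_{j=1}^{r}H_{m_j}^{+}$ as a finite intersection of half-spaces with primitive rational normals $m_j$. Intersecting with $\mathbb{Z}^s$ and passing to monoid algebras inside $\mathbb{C}[z,z^{-1}]$ gives $\mathbb{C}[S]=\bigcap_{j=1}^{r}\mathbb{C}[\mathbb{Z}^s\cap H_{m_j}^{+}]$. After a $\mathrm{GL}_s(\mathbb{Z})$-change of lattice basis sending $m_j$ to a coordinate functional, $\mathbb{C}[\mathbb{Z}^s\cap H_{m_j}^{+}]\cong\mathbb{C}[y_1,\dots,y_s][y_2^{-1},\dots,y_s^{-1}]$, a localization of a polynomial ring and hence normal; an intersection of normal domains with common fraction field $\mathbb{C}(z_1,\dots,z_s)$ is normal, so $\mathbb{C}[S]$ is normal. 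For $(1)\Rightarrow(2)$: given $km\in S$ with $k\geq 1$ and $m\in\mathbb{Z}^s$, the element $z^m\in\mathbb{C}(z_1,\dots,z_s)$ satisfies the monic equation $T^k-z^{km}=0$ with $z^{km}\in\mathbb{C}[S]$, so $z^m$ is integral over $\mathbb{C}[S]$; normality forces $z^m\in\mathbb{C}[S]$, i.e. $m\in S$, and $S$ is saturated.

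The main obstacle I expect is the implication $(2)\Rightarrow(1)$: one must correctly express $\check\sigma$ as an intersection of rational half-spaces, verify that $\mathbb{C}[S]$ is \emph{equal} to (not merely contained in) the intersection of the associated monoid algebras, and carry out the unimodular changes of lattice basis identifying each factor with a localized polynomial ring. The convex-geometry part $(2)\Leftrightarrow(3)$ is comparatively routine once Gordan's lemma and cone biduality are in hand.
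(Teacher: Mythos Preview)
The paper does not supply its own proof of this statement: it is quoted as a background result, with the remark ``A proof of this result can be found in \cite[Theorem $1.3.5$]{Cox}.'' Your argument is correct and is essentially the standard proof given in that reference (the implication $(2)\Rightarrow(1)$ via writing $\check\sigma$ as an intersection of rational half-spaces and identifying each factor with a localized polynomial ring, and $(1)\Rightarrow(2)$ via the monic equation $T^k-z^{km}=0$, are exactly the steps in Cox--Little--Schenck).
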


In order to describe the normalization of an affine toric variety $X(S)$, let $\check{\sigma} = C(S) \subset {\mathbb{R}}^s$ be the convex cone of any
finite generating set of $S$ and denote by $\sigma = \check{C}(S) \subset {\mathbb{R}}^s$ its dual cone. For a proof of the following proposition see, for instance, \cite[Proposition 1.3.8]{Cox}.

\begin{proposition}\label{normalization}
	With the above notations, the cone $\sigma$ is a strongly convex rational polyhedral cone
	in ${\mathbb{R}}^s$ and the inclusion $\mathbb{C}[S] \subset \mathbb{C}[\check{\sigma} \cap \mathbb{Z}^s]$ induces a morphism $ f: X(\check{\sigma} \cap \mathbb{Z}^s) \to X(S)$ which is the
	normalization map of $X(S)$.
\end{proposition}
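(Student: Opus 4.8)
The plan is to reduce the statement to the equivalences of the previous theorem together with three standard facts about rational cones. Write $\tau := C(S) = \check\sigma$; the substance of the proposition is that $\mathbb{C}[\tau \cap \mathbb{Z}^s]$ is the integral closure of $\mathbb{C}[S]$ in its field of fractions, plus the auxiliary observation that $\sigma$ is a strongly convex rational polyhedral cone.

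First I would verify the assertion about $\sigma$. Since $S$ is finitely generated by the integral vectors $\gamma_1,\dots,\gamma_p$, the cone $\tau = C(S) = \mathbb{R}_{\geq 0}S$ is a rational polyhedral cone, and the hypothesis $\mathbb{Z}S = \mathbb{Z}^s$ forces $\mathbb{R}S = \mathbb{R}^s$, so $\dim \tau = s$. By the Minkowski--Weyl/Farkas duality the dual cone $\sigma = \tau^{\vee}$ is again rational polyhedral, and it is strongly convex precisely because $\tau$ is full-dimensional (a nonzero $v \in \sigma \cap (-\sigma)$ would be orthogonal to all of $\tau$, hence to $\mathbb{R}^s$, a contradiction). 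The bipolar theorem for closed convex cones gives $\check\sigma = \sigma^{\vee} = \tau$, so $\check\sigma \cap \mathbb{Z}^s = \tau \cap \mathbb{Z}^s$; by Gordan's lemma this is a finitely generated semigroup, it is saturated by the remark following the definition of saturated semigroup, and it generates $\mathbb{Z}^s$ as a group since it contains $S$. Hence $X(\check\sigma \cap \mathbb{Z}^s)$ is a well-defined affine toric variety, and by the previous theorem (implication $3)\Rightarrow 1)$) it is normal, i.e. $\mathbb{C}[\check\sigma \cap \mathbb{Z}^s]$ is integrally closed in its field of fractions.

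Next I would show that the inclusion $\mathbb{C}[S] \hookrightarrow \mathbb{C}[\check\sigma \cap \mathbb{Z}^s]$ is an integral extension with the same field of fractions. Both semigroups generate $\mathbb{Z}^s$ as a group, so both coordinate rings have field of fractions the rational function field $\mathbb{C}(z_1,\dots,z_s)$ of the torus $T$. For integrality, take any $m \in \check\sigma \cap \mathbb{Z}^s = \tau \cap \mathbb{Z}^s$ and write $m = \sum_i r_i \gamma_i$ with $r_i \in \mathbb{Q}_{\geq 0}$; clearing denominators yields a positive integer $k$ with $km \in S$, so $z^m$ is a root of the monic polynomial $T^k - z^{km} \in \mathbb{C}[S][T]$. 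Since $\mathbb{C}[\check\sigma \cap \mathbb{Z}^s]$ is generated as a $\mathbb{C}$-algebra by finitely many such monomials $z^m$ (Gordan again), it is integral, and in fact module-finite, over $\mathbb{C}[S]$.

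Putting these together, inside the common field of fractions we have $\mathbb{C}[S] \subseteq \mathbb{C}[\check\sigma \cap \mathbb{Z}^s] \subseteq \overline{\mathbb{C}[S]}$, and since $\mathbb{C}[\check\sigma \cap \mathbb{Z}^s]$ is already integrally closed it coincides with $\overline{\mathbb{C}[S]}$; therefore the morphism $f \colon X(\check\sigma \cap \mathbb{Z}^s) \to X(S)$ induced by this inclusion is the normalization map of $X(S)$ (and is finite, by the module-finiteness above). I expect the only genuine subtlety to be the bookkeeping with the three duality facts for rational cones --- in particular the bipolar identity $\check\sigma = C(S)$, which is exactly what makes the previous theorem applicable --- whereas the ring-theoretic part is routine once the equality of fraction fields is recorded.
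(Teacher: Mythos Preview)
Your argument is correct and is essentially the standard proof of this fact. Note, however, that the paper does not supply its own proof of this proposition: it simply refers the reader to \cite[Proposition~1.3.8]{Cox}. What you have written is precisely the argument one finds there, namely: (i) duality of rational polyhedral cones gives that $\sigma$ is strongly convex rational polyhedral with $\check\sigma = C(S)$; (ii) Gordan's lemma and the previous theorem give that $\mathbb{C}[\check\sigma\cap\mathbb{Z}^s]$ is a normal ring; (iii) the inclusion $\mathbb{C}[S]\subset\mathbb{C}[\check\sigma\cap\mathbb{Z}^s]$ is integral because every lattice point of $C(S)$ has a multiple in $S$, and the two rings share the same fraction field since both semigroups generate $\mathbb{Z}^s$. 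One small remark: when you write $m=\sum_i r_i\gamma_i$ with $r_i\in\mathbb{Q}_{\ge 0}$, this uses that a rational linear system with a real nonnegative solution also has a rational nonnegative one; you might make that explicit, but it is routine.
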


As a direct consequence of this result, the normalization $f$ is a monomial map (see \cite[Example $1.3.9$]{Cox}).

Moreover, the restriction $f: X(\check{\sigma} \cap \mathbb{Z}^s) \setminus f^{-1}(\Sigma(X(S))) \to X(S) \setminus \Sigma(X(S))$ is an isomorphism, where $\Sigma(X(S))$ denotes the singular set of $X(S)$.

\subsection{Map-germs singularities} In Singularity Theory, an important class of map-germs $f:(\mathbb{C}^s,0) \to (\mathbb{C}^p,0)$ is the so-called finitely determined with respect to some group acting on the space of these map-germs.

Denote by $\mathcal{R}$ and $\mathcal{L}$ the groups of germs of analytic diffeomorphisms of $\mathbb{C}^s$ and $\mathbb{C}^p$ at the origin, respectively. The right-left equivalence group is defined as $$\mathcal{A}=\{(\rho,\psi); \ \rho \in \mathcal{R}\ \ \mbox{and}\ \ \psi \in \mathcal{L}\}.$$ We say that two map-germs $f$ and $g$ from $(\mathbb{C}^s,0)$ to $(\mathbb{C}^p,0)$ are $\mathcal{A}$-equivalent if there exists $(\rho,\psi) \in  \mathcal{A}$ such that $g=\psi \circ f \circ \rho^{-1}$.
\begin{definition}
	A map-germ $f:(\mathbb{C}^s,0)\to (\mathbb{C}^p,0)$ is $k$-$\mathcal{A}$-determined if for every $g:(\mathbb{C}^s,0)\to (\mathbb{C}^p,0)$ with the same $k$-jet of $f$, denoted by $j^kg(0)=j^kf(0)$, it follows that $f$ is $\mathcal{A}$-equivalent to $g$. We say that $f$ is $\mathcal{A}$-finitely determined or $\mathcal{A}$-finite if $f$ is $k$-$\mathcal{A}$-determined for some $k< \infty$.
\end{definition}

If $f:(\mathbb{C}^s,0)\to (\mathbb{C}^p,0)$ is a map-germ and $p \geq 2s$, then it follows from Mather-Gaffney geometric criterion of finite determinacy (see \cite{Wall}) that $f$ is $\mathcal{A}$-finite if and only if there exist a neighbourhood $U \subset \mathbb{C}^s$ of the origin and a representative $f: U  \to \mathbb{C}^p$ such that $f$ has isolated singularity at the origin and $f|_{U \setminus \{0\}}$ is an embedding. As a consequence the variety $X=f(U)\subset \mathbb{C}^p$ has an isolated singularity at the origin. 

Since the normalization map of a toric variety $X(S)$ is a monomial map, we relate monomial $\mathcal{A}$-finite map-germs obtained in \cite{EleniceCidinha1} and \cite{EleniceCidinha2} to properties of toric varieties with isolated singularity and smooth normalization map.

The proofs of the results in the next section are based on the general form of an $\mathcal{A}$-finite monomial map-germ from $(\mathbb{C}^s,0)$ to $(\mathbb{C}^p,0)$ for $p\geq 2s$, given in \cite[Theorem $4.4$]{EleniceCidinha2}.
When $p=2s$, Corollary $4.3$ in \cite{EleniceCidinha2} gives a complete classification result. More precisely, any singular monomial map-germ from $(\mathbb{C}^s,0)$ to $(\mathbb{C}^{2s},0)$ is $\mathcal{A}$-finite if and only if it is $\mathcal{A}$-equivalent to $$f(x_1,\ldots, x_{s-1},y)=(x_1, \ldots, x_{s-1},y^{n},y^{m}, x_1^{\lambda_1}y, \ldots, x_{s-1}^{\lambda_{s-1}}y),$$ for $\lambda_i,n,m\in \mathbb{N}$, with $i\in \{1,\ldots, s-1\}$, $1<n < m$, and $gcd(n,m)=1$. However, when $p > 2s$, the operations defined in \cite{EleniceCidinha2} produce, in general, families of $\mathcal{A}$-finite monomial singularities. Some monomial components of the prenormal form of the germ are essential to produce an $\mathcal{A}$-finite singularity, but we have freedom to choose monomials for the other components. In the next example, we illustrate this fact.
\begin{example}
	\label{Ex-Surf-C5}
	By \cite[Theorem $4.4$]{EleniceCidinha2}, a singular $\mathcal{A}$-finite monomial map-germ from $(\mathbb{C}^2,0)$ to $(\mathbb{C}^5,0)$ is $\mathcal{A}$-equivalent to one of the following map-germs:
	\begin{enumerate}
		\item $(x,y^{m_1},y^{m_2},x^{\lambda}y,0)$
		\item $(x,y^{m_1},y^{m_2},x^{\lambda}y,x^ay^b)$, with $x^ay^b \notin \mathbb{C}\{x,y^{m_1},y^{m_2},x^{\lambda}y\}$
		\item $(x,y^n,y^m,y^l,x^{\lambda}y)$
		\item $(x^{n_1},x^{n_2},y^{m_1},y^{m_2},xy)$     
	\end{enumerate}
	with $1<m_1<m_2$, $1< n<m<l$, and $1<n_1<n_2$ satisfying $gcd(n,m,l)=gcd(n_1,n_2)=gcd(m_1,m_2)=1$, $\lambda \geq 1$, and $a,b \neq 0$.
\end{example}

\section{Semigroup of isolated toric singularities with smooth normalization}

In this section, we give a prenormal form for the semigroup of an $s$-dimensional toric variety $X(S) \subset \mathbb{C}^p,$  with isolated singularity at the origin and smooth normalization (Theorems \ref{global} and \ref{global-s-p}). With these hypotheses, we prove that $p \geq 2s$ (Proposition \ref{embedding}), and when $p=2s$ we give a complete classification for $S$ (Theorem \ref{global-2s}).

The next result holds in general for any  $s$-dimensional analytic variety $X \subset \mathbb C^p, $ with isolated singularity at the origin and whose normalization map is
$f: \mathbb C^s \to X.$

\begin{proposition}\label{embedding} Let $X \subset \mathbb C^p $ be an $s$-dimensional  variety with isolated singularity at the origin and admitting smooth normalization $f: \mathbb C^s \to X.$ Then, $p \geq 2s.$ 
\end{proposition}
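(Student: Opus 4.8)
The plan is to analyze the local structure of the normalization map $f \colon \mathbb{C}^s \to X \subset \mathbb{C}^p$ near the origin and show that its differential forces the target dimension to be at least $2s$. Since $X$ has an isolated singularity at the origin and $f$ is an isomorphism over $X \setminus \{0\}$, the key point is that $f$ is \emph{finite} and generically one-to-one, but cannot be an immersion at $0$ (otherwise, by the rank theorem, $X$ would be smooth at the origin, contradicting the isolated singularity — assuming $X$ is genuinely singular there; the case where $X$ is smooth at $0$ is trivial since then $p \ge s$ suffices but the interesting content, and the statement, presumes a singularity). So $\mathrm{rank}\, df(0) \le s-1$. I would combine this rank drop with the requirement that $f$ be one-to-one on a punctured neighbourhood (which holds because $f$ is the normalization and $X\setminus\{0\}$ is smooth, hence already normal, so $f$ restricts to an isomorphism there) to bound $p$ from below.

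The first concrete step is to pass to the Mather--Gaffney picture: $f \colon (\mathbb{C}^s,0) \to (\mathbb{C}^p,0)$ has isolated instability, $f|_{\mathbb{C}^s \setminus \{0\}}$ is a one-to-one immersion, and $f$ has isolated singularity at $0$. By the criterion recalled in the excerpt (Mather--Gaffney, via \cite{Wall}), \emph{for $p \ge 2s$} this is equivalent to $\mathcal{A}$-finiteness; but here I must argue the inequality $p \ge 2s$ itself, so I cannot invoke that equivalence directly. Instead I would argue as follows. Consider the set $\Sigma_1(f) \subset \mathbb{C}^s$ where $\mathrm{rank}\, df < s$; since $f$ is an immersion away from $0$ and fails to be an immersion at $0$, we have $\Sigma_1(f) = \{0\}$, which is a point, i.e. has codimension $s$ in the source. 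On the other hand, the Thom--Boardman / Morin theory for the stratum $\Sigma^{1}$ gives the expected codimension of $\Sigma^1(f)$ as $(p - s + 1)\cdot 1 = p - s + 1$ when $p \ge s$. For $\Sigma^1(f)$ to be able to be a point (codimension $s$ in $\mathbb{C}^s$) in a stable/generic situation, one needs $p - s + 1 \ge s$ wherever the singularity is isolated; more carefully, since $f$ restricted to a sphere $S^{2s-1}$ around $0$ is an embedding into $\mathbb{C}^p = \mathbb{R}^{2p}$, a real embedding of a $(2s-1)$-sphere imposes no dimension constraint by itself, so the embedding alone is not enough — the extra input is that this embedding bounds (it is the boundary of the image of the ball, a real $2s$-dimensional set) and that $f$ is \emph{holomorphic}.

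The cleanest route, and the one I would actually write, uses the link and a dimension count on the singular locus of a generic deformation, or equivalently the double point formula. Let $f_t$ be a stabilization of $f$ (exists in the nice range, or one argues directly with the Hilbert--Samuel multiplicity). The source has dimension $s$; the image of $f$ near $0$ is an $s$-dimensional analytic set in $\mathbb{C}^p$ with an isolated singularity, and $f$ identifies two distinct source points only over the isolated singular point, so the "double point locus" $D(f) = \overline{\{x \ne x' : f(x) = f(x')\}} \subset \mathbb{C}^s$ is supported at $\{0\}$, hence has dimension $0$. But the double point locus of a holomorphic map $\mathbb{C}^s \to \mathbb{C}^p$ that is generically an embedding has expected dimension $2s - p$ (each coincidence $f_i(x) = f_i(x')$, $i = 1,\dots,p$, cuts down by one from the $2s$-dimensional space of pairs). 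For this expected dimension to be $\le 0$ — which is forced, since $D(f)$ is a point — and for $D(f)$ to be nonempty (it is, because $f$ is not an immersion at $0$, so the normalization genuinely glues something, or there is a double point infinitely near), one needs $2s - p \le 0$, i.e. $p \ge 2s$.

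The main obstacle I anticipate is justifying rigorously that $D(f)$ is nonempty and that the expected-dimension inequality goes the right way without circular appeal to the $p \ge 2s$ hypothesis elsewhere. The subtle point is distinguishing honest double points (two preimages) from "infinitely near" double points (ramification), since a normalization of an isolated singularity can be injective set-theoretically while still not being an immersion — e.g. a cusp-type gluing. To handle this I would phrase the count scheme-theoretically using the ideal sheaf of the diagonal, or invoke that a finite birational map which is an isomorphism in codimension $0$ onto a non-normal-point-free variety must have its non-isomorphism locus of codimension $\le$ something, together with the fact that a holomorphic map-germ $(\mathbb{C}^s,0)\to(\mathbb{C}^p,0)$ with $p < 2s$ and isolated instability would be stable (hence an immersion or with smooth image), contradicting the singularity. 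Once the nonemptiness of the genuine instability locus and the correct codimension bookkeeping are pinned down, the inequality $p \ge 2s$ falls out immediately.
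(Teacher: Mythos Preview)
Your proposal takes a genuinely different route from the paper's proof, and the detour stems from a misreading of the Mather--Gaffney criterion. The paper's argument is short: since the normalization is finite and an isomorphism over $X\setminus\{0\}$, the composite $\tilde f=\phi\circ f:(\mathbb{C}^s,0)\to(\mathbb{C}^p,0)$ is an embedding on a punctured neighbourhood; an embedding is stable, so by the Mather--Gaffney geometric criterion $\tilde f$ is $\mathcal{A}$-finite (indeed $\mathcal{L}$-finite), and then Theorem~2.5 in \cite{Wall} forces $p\geq 2s$. You discarded this route because the paper states the criterion in the form ``for $p\geq 2s$, $\mathcal{A}$-finite $\Leftrightarrow$ isolated singularity and embedding off $0$''. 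But only the implication $\Rightarrow$ uses $p\geq 2s$ (to rule out stable multi-germs with self-intersections); the implication you need, namely ``embedding on a punctured neighbourhood $\Rightarrow$ locally stable off $0$ $\Rightarrow$ $\mathcal{A}$-finite'', is valid for all $(s,p)$. So there is no circularity, and the paper simply invokes the two black boxes.

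Your double-point approach is in effect an attempt to reprove the content of Wall's Theorem~2.5 by hand, and the outline is sound: the double-point scheme $D^2(f)\subset\mathbb{C}^s\times\mathbb{C}^s$ (defined residually to the diagonal, or via the Fitting/divided-difference ideal as in \cite{BallesterosMond}) has every component of dimension $\geq 2s-p$, it contains the non-immersive locus on the diagonal, and here it is supported at a single point, yielding $2s-p\leq 0$. The obstacle you flag---nonemptiness when $f$ is set-theoretically injective, e.g.\ for cusp-type normalizations---is real but standard: it is precisely why one uses the scheme-theoretic $D^2(f)$ rather than the naive closure of off-diagonal coincidences, and the inclusion of the ramification locus in $D^2(f)\cap\Delta$ is a basic property of that construction. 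Note also that your Thom--Boardman count on $\Sigma^1$ alone gives only codimension $p-s+1$ in the source, hence $p\geq 2s-1$, which is one short; the double-point argument is genuinely needed for the sharp bound. What your approach buys is a self-contained argument avoiding the citation to \cite{Wall}; what the paper's approach buys is a two-line proof.
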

\begin{proof}
	Let  $f: \mathbb{C}^s \to X$ be the normalization map and $\phi : X \to \mathbb C^p$ an embedding.
	Thus, the composite $\tilde{f}=\phi \circ f : \mathbb C^s \to \mathbb C^p$ is also finite and generically one-to-one. Since $X$ has isolated singularity, these properties imply respectively that $\tilde{f}$ has isolated singularity at the origin and its restriction to $U \setminus \{0\}$ is an embedding, where $U$ is a neighbourhood of the origin. By Mather-Gaffney geometric criterion of finite determinacy, the germ of $\tilde{f}$ at the origin is $\mathcal{L}$-finitely
	determined (hence also $\mathcal A$-finitely determined). Then, it follows from Theorem 2.5 in \cite{Wall} that  $p \geq 2s$. This conclusion does not depend on the maps $f$ and $\phi$, since the normalization is unique.
\end{proof}

Let $S \subset \mathbb{Z}^s$ be the semigroup defining $X(S)$. As we said before, $C(S) = \mathbb{R}_{\geq 0} S$ is a strongly convex cone. This fact and the hypothesis of smooth normalization allow us to assume, up to a change of coordinates, that $S \subset \mathbb{N}^s$ and
$C(S)\cap \mathbb{Z}^s = \mathbb{N}^s$ and therefore, in this case, the normalization map is given by $f: \mathbb{C}^s \to X(S)$. We use the same notation $\tilde{f}:(\mathbb{C}^s,0) \to (\mathbb{C}^p,0)$ for the germ at the origin of $\tilde{f}$, as in the proof of Proposition \ref{embedding}.

\begin{proposition}\label{Afinitamentedeterminada} Let $X(S) \subset \mathbb{C}^p$ be an $s$-dimensional toric variety with isolated singularity at the origin and $p \geq 2s$. If $f: \mathbb{C}^s \to X(S)$ is the normalization map of $X(S),$ then  the following conditions hold:
	
	\begin{itemize} 
		\item [(1)] $\tilde{f} :  (\mathbb{C}^s,0) \to (\mathbb{C}^p,0)$ is an $\mathcal{A}$-finite monomial map-germ. 
		\item [(2)]	 The singular set of $X(S),$ denoted $\Sigma X(S),$ reduces to $0.$
		\item [(3)]  $f^{-1}(0)=\{0\}$.
	\end{itemize}	
\end{proposition}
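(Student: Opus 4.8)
The plan is to establish items (2) and (3) first, using the torus action on $X(S)$, and then to deduce (1) from the Mather--Gaffney criterion, following the proof of Proposition~\ref{embedding}.

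For (2), I would use that $\Sigma X(S)$ is a closed, $T$-invariant subset of $X(S)$, since $T$ acts by automorphisms. Picking a lattice point $u$ in the interior of $\sigma = \check{C}(S)$ (which is nonempty since $C(S)$ is strongly convex and $s$-dimensional) produces a one-parameter subgroup $\lambda_u\colon \mathbb{C}^*\to T$ which scales the $i$-th coordinate of $\mathbb{C}^p\supset X(S)$ by $w^{\langle u,\gamma_i\rangle}$, with every exponent $\langle u,\gamma_i\rangle>0$; hence $\lambda_u(w)\cdot x\to 0$ as $w\to 0$ for each $x\in X(S)$, so the origin lies in the closure of every $T$-orbit. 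Consequently a singular point $x_0\neq 0$ would force the positive-dimensional orbit $T\cdot x_0\subset\Sigma X(S)$ to accumulate at the origin, contradicting the hypothesis that the singularity at $0$ is isolated; since $0\in\Sigma X(S)$ by hypothesis, this gives $\Sigma X(S)=\{0\}$.

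For (3), the normalization map $f$ is finite and $T$-equivariant, so $f^{-1}(0)$ is a finite, $T$-invariant subset of $\mathbb{C}^s=X(\mathbb{N}^s)$; under the coordinatewise action the only finite orbit in $\mathbb{C}^s$ is the fixed point $\{0\}$, so $f^{-1}(0)\subset\{0\}$, and equality holds as $f$ is surjective. (Alternatively, since $C(S)=\mathbb{R}^s_{\geq 0}$, each coordinate ray is spanned by one of the $\gamma_i$, so among the components of $\tilde f$ one finds the pure powers $z_1^{c_1},\dots,z_s^{c_s}$, and $\tilde f^{-1}(0)=f^{-1}(0)\subset\mathcal{V}(z_1^{c_1},\dots,z_s^{c_s})=\{0\}$.) Then, for (1): by Proposition~\ref{normalization} the normalization is a monomial map, and composing it with the toric embedding $x_i\mapsto z^{\gamma_i}$ exhibits $\tilde f$ as the monomial map-germ $z\mapsto(z^{\gamma_1},\dots,z^{\gamma_p})$. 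This germ is finite and generically one-to-one; by (2), (3) and the remark following Proposition~\ref{normalization}, $f$ is an isomorphism from $\mathbb{C}^s\setminus\{0\}$ onto $X(S)\setminus\{0\}$, so $\tilde f|_{\mathbb{C}^s\setminus\{0\}}$ is an embedding and $0$ is necessarily a singular point of $\tilde f$ (otherwise $X(S)$ would be smooth at the origin), hence its only one, and it is isolated. As $p\geq 2s$, the Mather--Gaffney criterion then gives that $\tilde f$ is $\mathcal{A}$-finite.

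I expect item (2) to be the main obstacle: excluding singularities away from the origin is a global statement that the local finite-determinacy input of Proposition~\ref{embedding} does not see, and the geometric point that makes it work is that in an affine toric variety with strongly convex, full-dimensional cone every orbit has the torus-fixed point in its closure. With (2) and (3) in hand, item (1) reduces to repeating the Mather--Gaffney argument of Proposition~\ref{embedding}.
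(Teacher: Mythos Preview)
Your proof is correct, but the logical order differs from the paper's. The paper proves (1) first by simply pointing back to the proof of Proposition~\ref{embedding} (where the Mather--Gaffney argument was already carried out) together with Proposition~\ref{normalization} for the monomial nature of $f$; then (2) via the torus action, in one line, using that the origin is the unique $0$-dimensional orbit; and finally deduces (3) from (1), arguing that for a monomial map $f^{-1}(0)$ is a union of linear subspaces through the origin, which would contradict $\mathcal{A}$-finiteness of $\tilde f$ were it strictly larger than $\{0\}$. Your route inverts this: you obtain (3) independently of (1), via finiteness and $T$-equivariance of the normalization (or the pure-power argument, which is legitimate since the extremal rays of $C(S)=\mathbb{R}^s_{\geq 0}$ must lie among the rays spanned by the $\gamma_i$), and then use (2) and (3) to feed into Mather--Gaffney for (1). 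Both orderings are sound; the paper's has the advantage of reusing Proposition~\ref{embedding} verbatim, while your independent argument for (3) is a clean toric statement that does not depend on finite determinacy. Your expectation that (2) would be the main obstacle is slightly off: the paper dispatches it in a sentence from the standard orbit structure, and it is really in (3) that the two approaches diverge.
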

\begin{proof}
	We proved in Proposition \ref{embedding} that the map-germ $\tilde{f}$ is $\mathcal A$-finite and, as a consequence of Proposition \ref{normalization}, the normalization map $f$ is monomial. Hence, (1) holds. 
	
	To prove (2), recall that the action from $(\mathbb{C}^*)^s$ on $X(S)$ has an unique $0$-dimensional orbit, which is the origin of $\mathbb{C}^p$. As $X(S)$ has an isolated singularity, then $0$ is actually its only singularity. 
	
	Now, (1) and (2) imply (3). In fact, suppose that  $f^{-1}(0) \varsupsetneq \{0\}.$ As $f$ is a monomial map,
	it follows that $f^{-1}(0)$ is a union of linear spaces  passing through the origin, which contradicts the 
	$\mathcal A$-finiteness of the germ $\tilde{f}$ at the origin. For another proof, see for instance \cite[Lemma 2.3]{ArturoDaniel}.
\end{proof}

\begin{proposition}
	\label{Proposition-Isom-Semig}
	Let $f,g:(\mathbb{C}^s,0) \to (\mathbb{C}^p,0)$ be $\mathcal{A}$-finite monomial map-germs with $s < p$. Denote by $S_f$ and $S_g$ the semigroups generated by the exponents of the components of $f$ and $g$, respectively. Then, $f \underset{\mathcal{A}}{\sim}g$ if and only if $S_f$ and $S_g$ are isomorphic. 
\end{proposition}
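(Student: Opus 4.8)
The plan is to prove both implications separately, with the forward direction being essentially trivial and the reverse direction requiring the structural information about $\mathcal{A}$-finite monomial germs that comes from the classification results cited in the excerpt.

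First, the direction $f \underset{\mathcal{A}}{\sim} g \Rightarrow S_f \cong S_g$. If $f$ and $g$ are $\mathcal{A}$-equivalent monomial map-germs, I would first invoke the fact (which follows from the normal-form results of \cite{EleniceCidinha1, EleniceCidinha2}, or more elementarily from the theory of monomial maps) that an $\mathcal{A}$-equivalence between two monomial germs can be realized by a monomial change of coordinates: there is a permutation and rescaling of source variables $x \mapsto (c_1 x_{\rho(1)}, \ldots, c_s x_{\rho(s)})$ together with a permutation and linear change of target coordinates carrying the components of $f$ to those of $g$. Under such a monomial coordinate change, the exponent vectors of the components of $f$ are carried, by the linear map induced by $\rho$ on $\mathbb{Z}^s$, to the exponent vectors of the components of $g$; hence this linear isomorphism of $\mathbb{Z}^s$ restricts to a semigroup isomorphism $S_f \to S_g$. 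So the key step here is establishing that $\mathcal{A}$-equivalence among monomial germs is the same as monomial $\mathcal{A}$-equivalence.

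Second, the direction $S_f \cong S_g \Rightarrow f \underset{\mathcal{A}}{\sim} g$. Suppose $\varphi : S_f \to S_g$ is a semigroup isomorphism. Since $\mathbb{Z}S_f = \mathbb{Z}S_g = \mathbb{Z}^s$ (using that the cone $C(S_f)$ is strongly convex $s$-dimensional, which holds for these germs), $\varphi$ extends to a group automorphism $\Phi$ of $\mathbb{Z}^s$, given by an integer matrix $A \in GL_s(\mathbb{Z})$. The point is that $A$ sends the minimal generating set of $S_f$ bijectively onto that of $S_g$ — a semigroup isomorphism must carry irreducible elements to irreducible elements, so the minimal generators correspond. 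Now observe that $f(x) = (x^{\gamma_1}, \ldots, x^{\gamma_p})$ where $\{\gamma_1, \ldots, \gamma_p\}$ (with multiplicities/orderings) is the generating set used to write $f$; composing $f$ with the monomial substitution $x_i \mapsto$ (monomial corresponding to the $i$-th row of $A^{-1}$) — which is legitimate as an analytic change of coordinates of $(\mathbb{C}^s,0)$ precisely when $A \in GL_s(\mathbb{Z})$ and the relevant monomials involve no negative exponents, which I would need to arrange by combining with Proposition \ref{Afinitamentedeterminada}(3) and the fact that $C(S) \cap \mathbb{Z}^s = \mathbb{N}^s$ — transforms $f$ into a monomial germ whose exponents are exactly $A\gamma_1, \ldots, A\gamma_p$, i.e. the generators of $S_g$ up to reordering. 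A permutation of target coordinates then identifies this with $g$. This yields $f \underset{\mathcal{A}}{\sim} g$.

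**The main obstacle** I anticipate is the positivity/invertibility bookkeeping in the reverse direction: an arbitrary $A \in GL_s(\mathbb{Z})$ does not by itself give a well-defined \emph{germ} automorphism of $(\mathbb{C}^s,0)$, since $A^{-1}$ may have negative entries and $x \mapsto x^{A^{-1}}$ would then involve Laurent monomials. The resolution uses the normalizing hypothesis already set up in the excerpt — that coordinates have been chosen so $C(S) \cap \mathbb{Z}^s = \mathbb{N}^s$, so the cone generated by $S$ is (the dual of) the positive orthant. A semigroup isomorphism $S_f \to S_g$ must then carry the set of lattice points of this saturated cone to itself, forcing $A$ to be a signed permutation matrix (equivalently, $A$ permutes the extremal rays $\mathbb{R}_{\geq 0}e_i$), and such $A$ \emph{do} induce honest monomial automorphisms of $(\mathbb{C}^s,0)$. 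Making this reduction precise — that both $S_f$ and $S_g$ have the same saturation $\mathbb{N}^s$ and that an isomorphism between them lifts to a permutation of $\mathbb{N}^s$ — is the real content, and once it is in hand the rest is routine substitution. An alternative, perhaps cleaner, route is to bypass matrices entirely and argue directly from the classification: by \cite[Theorem $4.4$]{EleniceCidinha2} each of $f$ and $g$ is $\mathcal{A}$-equivalent to a germ in explicit normal form, the normal form is determined by the discrete data of the semigroup (the exponents appearing), and isomorphic semigroups produce identical normal-form data, hence $f \underset{\mathcal{A}}{\sim} g$; the drawback is that this would lean on a case analysis rather than a uniform argument.
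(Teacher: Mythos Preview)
Your reverse direction is workable in principle, but your forward direction has a genuine gap. The claim that an $\mathcal{A}$-equivalence between two monomial germs can always be realized by monomial coordinate changes is not established by the normal-form results you cite: those results classify $\mathcal{A}$-orbits of monomial germs, but the equivalences produced in their proofs are not asserted to be monomial, and in any case knowing that $f$ and $g$ share a normal form $N$ only gives $S_f\cong S_g$ once you already know that the passage $f\rightsquigarrow N$ preserves the semigroup up to isomorphism---which is exactly the forward implication you are trying to prove. So this step is either circular or requires an independent argument you have not supplied.

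The paper avoids this entirely by working at the level of algebras and images rather than coordinate changes. For the forward direction it invokes Gaffney's result that $\mathcal{A}$-equivalence of finite map-germs implies isomorphism of the pullback algebras $f^*(\mathcal{O}_p)\cong g^*(\mathcal{O}_p)$; since these are the (analytic) semigroup algebras $\mathbb{C}\{S_f\}$ and $\mathbb{C}\{S_g\}$, the semigroup isomorphism is extracted from the algebra isomorphism without ever needing the equivalence itself to be monomial. For the reverse direction the paper is also quite different from yours: rather than building an explicit source automorphism from a $GL_s(\mathbb{Z})$ matrix (and wrestling with positivity), it observes that $f$ and $g$ are the normalization maps of the toric germs $(X(S_f),0)$ and $(X(S_g),0)$; isomorphic semigroups give isomorphic toric varieties, hence isomorphic image germs, and another result of Gaffney then yields $f\underset{\mathcal{A}}{\sim}g$ directly. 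This bypasses all the cone/permutation-matrix bookkeeping you identified as the main obstacle.

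Your alternative route via the classification could in principle be made to work, but it would require checking, case by case, that distinct normal forms in \cite[Theorem~4.4]{EleniceCidinha2} have non-isomorphic semigroups---not just that they are $\mathcal{A}$-inequivalent---and that is a separate computation the classification does not provide for free.
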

\begin{proof} First of all, we observe that if $f$ is $\mathcal{A}$-finite, $s < p$, and $(f(\mathbb{C}^s),0)$ is the germ of the image of $f$ at the origin, then $f:(\mathbb{C}^s,0) \to (f(\mathbb{C}^s),0)$ is the normalization map (see \cite[Proposition $D.4$]{BallesterosMond}). Therefore, $(f(\mathbb{C}^s),0)=(X(S_f),0)$. 
	
	If the semigroups $S_f$ and $S_g$ are isomorphic, then the toric varieties $X(S_f)$ and $X(S_g)$ are isomorphic. Consequently, so are their germs $(X(S_f),0)$ and $(X(S_g),0)$. Therefore, the map-germs $f$ and $g$ are ${\mathcal{A}}$-equivalent (see \cite[Corollary 5.11]{ThesisGaffney}).
	
	If $f$ and $g$ are ${\mathcal{A}}$-equivalent, then by \cite[Corollary 5.10]{ThesisGaffney} the $\mathbb{C}$-algebras $f^*(\mathcal{O}_p) = \mathbb{C}\{f_1,\dots,f_p\}$ and $g^*(\mathcal{O}_p) = \mathbb{C}\{g_1,\dots,g_p\}$ are isomorphic, with $f_i$ and $g_i$ monic monomials, for $i\in \{1,\ldots , p\}$. Thus, there exists an isomorphism between the multiplicative groups of monic monomials of $\mathbb{C}\{f_1,\dots,f_p\}$ and $\mathbb{C}\{g_1,\dots,g_p\}$. Moreover, this one induces an isomorphism from $S_f$ to $S_g$. 
\end{proof}

\vspace{0.2cm} The next theorem provides a prenormal form for the semigroup of toric surfaces with isolated singularity and smooth normalization.

\begin{theorem}\label{global}
	Let $X(S) \subset \mathbb{C}^p$ be a toric surface. Then, $X(S)$ has isolated singularity at the origin and  normalization map $f: \mathbb{C}^2 \to X(S)$ if and only if $p \geq 4$ and $S$ is generated (up to isomorphism) by one of the following sets:
	\begin{itemize}
		\item[1.] $ \{(1,0), (0,m_1), \dots, (0,m_l), (\lambda, 1), (a_1,b_1), \dots, (a_{p-2-l}, b_{p-2-l})\}$, for some integer $l$ with $2\leq l \leq p-2$ or 
		
		\item[2.] $
		\{(n_1,0), \dots, (n_k,0), (0,m_1), \dots, (0,m_l), (\lambda, 1), (1,b_1), (a_2,b_2), \dots, (a_{t}, b_{t}),\ \}$ for some $k, l \geq 2$ with $k +l \leq p-1$ and  $t= p-1-(k+l)$,
	\end{itemize}
	where $gcd(m_1, \dots,m_l)=1 = gcd (n_1, \dots, n_k)$, $a_i, b_i, \lambda \in \mathbb{N}$ with $n_k > \dots > n_1 > 1$ and $m_l > \dots > m_1 > 1$. Moreover, if $k+l=p-1$, then $\lambda=1$.
\end{theorem}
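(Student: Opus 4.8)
The plan is to pass from the toric setting to the classification of $\mathcal{A}$-finite monomial map-germs $(\mathbb{C}^2,0)\to(\mathbb{C}^p,0)$, $p\ge 4$, given in \cite[Theorem $4.4$]{EleniceCidinha2}, using the three propositions already proved in this section as a dictionary. Concretely: Proposition \ref{embedding} forces $p\ge 4$; Proposition \ref{Afinitamentedeterminada} shows that, under our hypotheses, a choice of generators $\gamma_1,\dots,\gamma_p$ of $S$ yields an $\mathcal{A}$-finite monomial map-germ $\tilde f=(z^{\gamma_1},\dots,z^{\gamma_p}):(\mathbb{C}^2,0)\to(\mathbb{C}^p,0)$; and Proposition \ref{Proposition-Isom-Semig} shows that two $\mathcal{A}$-finite monomial map-germs are $\mathcal{A}$-equivalent if and only if the semigroups generated by the exponents of their components are isomorphic. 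Thus $S$ is determined, up to isomorphism, by the $\mathcal{A}$-equivalence class of $\tilde f$, and the statement to be proved is the ``semigroup translation'' of the germ classification.

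For the direct implication, assume $X(S)$ has an isolated singularity at the origin and smooth normalization $f:\mathbb{C}^2\to X(S)$. Since $C(S)=\mathbb{R}_{\ge 0}S$ is strongly convex and the normalization is smooth, after an automorphism of $\mathbb{Z}^2$ (harmless, since the conclusion is stated up to isomorphism) we may assume $C(S)\cap\mathbb{Z}^2=\mathbb{N}^2$, as explained before Proposition \ref{Afinitamentedeterminada}; then the germ $\tilde f$ above is defined. By Proposition \ref{embedding}, $p\ge 4$, and by Proposition \ref{Afinitamentedeterminada}, $\tilde f$ is an $\mathcal{A}$-finite monomial map-germ, so \cite[Theorem $4.4$]{EleniceCidinha2} applies with $s=2$: $\tilde f$ is $\mathcal{A}$-equivalent to a monomial germ whose exponents form one of the two patterns in the statement --- roughly, either one source coordinate survives as a component (producing the generator $(1,0)$ together with a tie $(\lambda,1)$, the pure powers $(0,m_i)$, and the remaining monomials $(a_i,b_i)$), or no coordinate survives but pure powers appear along both coordinate axes (producing case $2$, with its distinguished generator $(1,b_1)$). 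By Proposition \ref{Proposition-Isom-Semig}, $S$ is isomorphic to the semigroup generated by these exponents, and the numerical conditions ($\gcd(m_1,\dots,m_l)=\gcd(n_1,\dots,n_k)=1$, the strict inequalities $n_k>\dots>n_1>1$ and $m_l>\dots>m_1>1$, and $\lambda=1$ whenever $k+l=p-1$) are the ones recorded in \cite[Theorem $4.4$]{EleniceCidinha2}.

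For the converse, suppose $p\ge 4$ and $S$ is generated by one of the two listed sets. All generators lie in $\mathbb{N}^2$ and, in either case, the set contains a generator on each coordinate ray (namely $(1,0)$ or some $(n_j,0)$, and some $(0,m_i)$), so $C(S)=\mathbb{R}_{\ge 0}^2$; since $\check\sigma=C(S)$ by Proposition \ref{normalization}, it follows that $\check\sigma\cap\mathbb{Z}^2=\mathbb{N}^2$, hence the normalization of $X(S)$ is $X(\mathbb{N}^2)=\mathbb{C}^2$ with normalization map $f:\mathbb{C}^2\to X(S)$; in particular the normalization is smooth. It remains to see that $X(S)$ has an isolated singularity at the origin; by the Mather--Gaffney criterion recalled in Section $2$ this is equivalent to the monomial germ $\tilde f=(z^{\gamma_1},\dots,z^{\gamma_p})$ attached to the generators being $\mathcal{A}$-finite. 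But, after discarding any component that is zero or redundant as a semigroup generator --- an operation that changes neither the isomorphism type of $X(S)$ nor the $\mathcal{A}$-finiteness of the germ --- and using the stated numerical restrictions, $\tilde f$ is exactly one of the normal forms produced in \cite[Theorem $4.4$]{EleniceCidinha2}, hence $\mathcal{A}$-finite. Therefore $X(S)$, being its image, has an isolated singularity at the origin.

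The main obstacle is the faithful matching in both directions: one must check that the list of $\mathcal{A}$-finite normal forms of \cite[Theorem $4.4$]{EleniceCidinha2} for $s=2$ is reproduced \emph{exactly} by the two displayed families of generating sets --- nothing missing and nothing spurious --- keeping careful track of which components of the normal form are essential and which are free, of the special generator $(1,b_1)$ in case $2$, and of the dichotomy ``$k+l=p-1\Rightarrow\lambda=1$''. Secondary, routine points are: justifying the reduction of $C(S)$ to the first quadrant in the direct implication, and verifying in the converse that allowing arbitrary (possibly zero or redundant) exponents among the $(a_i,b_i)$ neither changes $X(S)$ up to isomorphism nor spoils $\mathcal{A}$-finiteness --- both rest on the fact that $X(S)=\mathrm{Spec}\,\mathbb{C}[S]$ is intrinsic and that having an isolated singularity is a property of the germ, independent of the embedding.
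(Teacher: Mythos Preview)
Your proposal is correct and follows essentially the same route as the paper: Propositions \ref{embedding}, \ref{Afinitamentedeterminada}, and \ref{Proposition-Isom-Semig} reduce both directions to \cite[Theorem~4.4]{EleniceCidinha2}, and Mather--Gaffney closes the converse. One point you gloss over that the paper makes explicit: Mather--Gaffney only tells you the \emph{germ} $(X(S),0)$ has an isolated singularity, and the paper invokes the torus action (the singular locus is a union of orbit closures, each containing the origin) to upgrade this to the global statement $\Sigma X(S)=\{0\}$; your phrase ``this is equivalent to $\tilde f$ being $\mathcal{A}$-finite'' elides that passage, and your digression about discarding zero or redundant components is unnecessary since the listed generators already match the normal forms of \cite[Theorem~4.4]{EleniceCidinha2} directly.
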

\begin{proof} 
	Let $X(S)$ be a toric surface with isolated singularity at the origin and normalization map $f: \mathbb{C}^2 \to X(S)$. By Proposition \ref{embedding}, it follows that $p \geq 4$. Denote by $\tilde{f}: (\mathbb{C}^2,0) \to (\mathbb{C}^p,0)$ the map-germ of $f$ at the origin and consider $V$ an open neighbourhood of the origin in $\mathbb{C}^2$ satisfying $$\tilde{f}(\mathbb{C}^2 \cap V) = f(\mathbb{C}^2 \cap V).$$
	
	By Proposition \ref{Afinitamentedeterminada}, the map-germ $\tilde{f}$ is monomial and $\mathcal{A}$-finite. If $\tilde{f}$ has corank $1$, by \cite[Theorem $4.4$]{EleniceCidinha2} there exists $l$ with $2\leq l \leq p-2$ such that $\tilde{f}|_{V}$ is $\mathcal{A}$-equivalent to
	\begin{equation}
		\label{Map-g-Corank-1}
		g(x,y) = (x, y^{m_1}, \dots, y^{m_l}, x^{\lambda}y, x^{a_1} y^{b_1}, \dots, x^{a_{p-l-2}} y^{b_{p-l-2}})   
	\end{equation}
	for some $m_i,a_j, b_j, \lambda \in \mathbb{N}$, where $gcd(m_1, \dots,m_l)=1$, and $m_l > \dots > m_1 > 1$ for $i\in \{1,\ldots, l\}$ and $j\in \{1,\ldots, p-l-2\}$. 
	
	Consider $S_g$ the semigroup generated by the exponents of the components of $g$ and $X(S_g) \subset \mathbb{C}^p$ the toric surface generated by $S_g$. By Proposition \ref{Proposition-Isom-Semig}, $S_g$ and $S_{\tilde{f}|_{V}}$ are isomorphic. Since $(1,0) \in S_g$ and $gcd(m_1, \dots,m_l)=1$, we have $\mathbb{Z}{S_g} = \mathbb{Z}^2$. Moreover the cone $\check{\sigma} =
	C(S_g)$ is the first quadrant of $\mathbb{R}^2$. It follows from Proposition \ref{normalization} that the normalization map $F: \mathbb{C}^2 \to X(S_g)$ of $X(S_g)$ is given by
	$$F(x,y) = (x, y^{m_1}, \dots, y^{m_l}, x^{\lambda}y, x^{a_1} y^{b_1}, \dots, x^{a_{p-l-2}} y^{b_{p-l-2}}).$$ 
	Moreover, $F|_V =g \underset{\mathcal{A}}{\sim}\tilde{f}|_V= f|_V$. As consequence of Proposition \ref{normalization}, $f((\mathbb{C}^*)^2)$ and $F((\mathbb{C}^*)^2)$ are the dense orbits of the torus action in $X(S)$ and $X(S_g)$, respectively. Therefore, if they coincide in $V \cap (\mathbb{C}^*)^2$, they are indeed equal. Then $X(S) = X(S_g)$ and $f = F$.
	
	On the other hand, if $\tilde{f}:(\mathbb{C}^2,0) \to (\mathbb{C}^p,0)$ has corank $2$, by \cite[Theorem $4.4$]{EleniceCidinha2} 
	there exist $k, l \geq 2$ with $ k +l \leq p-1$ such that $\tilde{f}|_{V}$ is $\mathcal{A}$-equivalent to
	$$
	g(x,y)=(x^{n_1}, \dots, x^{n_k},y^{m_1}, \dots, y^{m_l}, x^{\lambda}  y,
	x  y^{b_1}, x^{a_2}  y^{b_2}, \dots, x^{a_{p-1-(k+l)}} y^{ b_{p-1-(k+l)}}),
	$$
	where $gcd(n_1, \dots, n_k)=1=gcd(m_1, \dots,m_l)$, $a_i, b_i, \lambda \in \mathbb{N}$,  $n_k > \dots > n_1 > 1$ and $m_l > \dots > m_1 > 1$, in which if $p-1=k+l$, then $\lambda=b_1$. Proceeding exactly in the same way as in the previous item, we can prove that $F = f$.
	
	Now suppose that the semigroup $S$ is given as in item $1$  (the argument for the semigroup of item $2$ is analogous). Then $\mathbb{Z} S = \mathbb{Z}^2$ and, by Proposition \ref{normalization}, the monomial map $f: \mathbb{C}^2 \to \mathbb{C}^p$ as in (\ref{Map-g-Corank-1}) is the normalization map of $X(S)=f(\mathbb{C}^2)$. Thus the map-germ of $f$ at the origin $\tilde{f}: (\mathbb{C}^2,0) \to (\mathbb{C}^p,0)$ is $\mathcal{A}$-finite (\cite[Theorem $4.4$]{EleniceCidinha2}). By Mather-Gaffney geometric criterion of finite determinacy, there exists a neighborhood $U$ of $0$ such that it is an isolated singularity of $\tilde{f}$ and $\tilde{f}|_{U \setminus \{0\}}$ is an embedding. Using the torus action as above, we conclude that $X(S)$ has an isolated singularity at the origin.\end{proof}	

\begin{example}
	With the same notations as in Example \ref{Ex-Surf-C5} and by previous theorem, a toric surface $X(S)$ in $\mathbb{C}^5$ has isolated singularity and smooth normalization map if and only if the semigroup $S$ is isomorphic to the one generated by the sets:
	\begin{enumerate}
		\item $\{(1,0),(0,m_1),(0,m_2),(\lambda, 1),(a,b)\}$
		\item $\{(1,0),(0,n),(0,m),(0,l),(\lambda, 1)\}$
		\item $\{(n_1,0),(n_2,0),(0,m_1),(0,m_2),(1, 1)\}$, with $a,b,\lambda \in \mathbb{N}$.
	\end{enumerate}
\end{example}

The next results generalize Theorem \ref{global}. More precisely, we give a prenormal form for the generators of the semigroup $S$ of a toric variety with isolated singularity and smooth normalization. In some cases, according to the dimension of the variety and its embedding dimension, a normal form for the generators of $S$ is obtained, as in the following theorem.

\begin{theorem}\label{global-2s}
	Let $X(S) \subset \mathbb{C}^{2s}$ be an $s$-dimensional toric variety. Then, $X(S)$ has isolated singularity at the origin and  normalization map $f: \mathbb{C}^s \to X(S)$ if and only if the semigroup $S\subset \mathbb{Z}^s$ is generated (up to isomorphism) by the set 
	\begin{equation*} 
		\begin{array}{c}
			\{(1,0, \dots,0), \dots, (0, \dots,1,0), (\lambda_1, 0, \dots, 0, 1), \dots,
			(0, \lambda_2, 0, \dots, 0, 1), \\ 
			(0, \dots, \lambda_{s-2},0, 1), (0, \dots, 0, \lambda_{s-1}, 1), (0, \dots, 0, n), (0, \dots, 0, m) \}
		\end{array}
	\end{equation*}
	where $\lambda_i,n,m \in \mathbb{N}$ with $1 < n< m$, and $gcd(n,m)=1$ for $i=1, \dots, s-1$. For $s=2$, the semigroup $S$ is generated by $\{(1,0),(\lambda,1),(0,n),(0,m)\}$ with $\lambda \in \mathbb{N}$.
\end{theorem}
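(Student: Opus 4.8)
The plan is to reduce Theorem \ref{global-2s} to the already-available classification of $\mathcal{A}$-finite monomial map-germs into $\mathbb{C}^{2s}$ (Corollary $4.3$ of \cite{EleniceCidinha2}, recalled in the excerpt) via Propositions \ref{embedding}, \ref{Afinitamentedeterminada} and \ref{Proposition-Isom-Semig}, exactly mirroring the proof of Theorem \ref{global}. First I would prove the forward direction: assume $X(S)\subset\mathbb{C}^{2s}$ is an $s$-dimensional toric variety with isolated singularity at the origin and smooth normalization $f:\mathbb{C}^s\to X(S)$. By Proposition \ref{embedding} the embedding dimension satisfies $p\geq 2s$, so here $p=2s$ is the extremal case. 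By Proposition \ref{Afinitamentedeterminada} the germ $\tilde f:(\mathbb{C}^s,0)\to(\mathbb{C}^{2s},0)$ is an $\mathcal{A}$-finite monomial map-germ. Now I invoke the $p=2s$ classification quoted from \cite[Corollary $4.3$]{EleniceCidinha2}: $\tilde f$ restricted to a suitable neighbourhood $V$ is $\mathcal{A}$-equivalent to the normal form
\[
g(x_1,\dots,x_{s-1},y)=(x_1,\dots,x_{s-1},y^{n},y^{m},x_1^{\lambda_1}y,\dots,x_{s-1}^{\lambda_{s-1}}y)
\]
with $\lambda_i,n,m\in\mathbb{N}$, $1<n<m$, $\gcd(n,m)=1$.

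Next I would pass from this map-germ normal form to a semigroup statement. Let $S_g$ be the semigroup generated by the exponent vectors of the components of $g$, namely
\[
\{e_1,\dots,e_{s-1},\,n e_s,\,m e_s,\,\lambda_1 e_1+e_s,\dots,\lambda_{s-1}e_{s-1}+e_s\}\subset\mathbb{Z}^s,
\]
which is precisely the generating set in the statement. Since $e_1,\dots,e_{s-1}\in S_g$ and $\gcd(n,m)=1$, we get $\mathbb{Z}S_g=\mathbb{Z}^s$, and the cone $C(S_g)$ is the first orthant of $\mathbb{R}^s$, so $C(S_g)\cap\mathbb{Z}^s=\mathbb{N}^s$; hence by Proposition \ref{normalization} the normalization map of $X(S_g)$ is exactly the monomial map $F=g:\mathbb{C}^s\to X(S_g)$. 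By Proposition \ref{Proposition-Isom-Semig}, since $\tilde f|_V\underset{\mathcal{A}}{\sim}g$ and both are $\mathcal{A}$-finite monomial map-germs with source dimension strictly smaller than target dimension, the semigroups $S_{\tilde f|_V}$ and $S_g$ are isomorphic; and $S_{\tilde f|_V}=S$ because $f$ is the normalization of $X(S)$ and a monomial map. Using the torus action as in the proof of Theorem \ref{global} — the dense torus orbits $f((\mathbb{C}^*)^s)$ and $F((\mathbb{C}^*)^s)$ agree on $V\cap(\mathbb{C}^*)^s$, hence coincide, forcing $X(S)=X(S_g)$ — we conclude $S$ is isomorphic to $S_g$, i.e.\ $S$ has the asserted generating set.

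For the converse, I would start from a semigroup $S$ generated by the displayed set. One checks $\mathbb{Z}S=\mathbb{Z}^s$ (again using $e_1,\dots,e_{s-1}\in S$ and $\gcd(n,m)=1$) and that $C(S)$ is the first orthant, so by Proposition \ref{normalization} the monomial map $f=g:\mathbb{C}^s\to\mathbb{C}^{2s}$ above is the normalization map of $X(S)=f(\mathbb{C}^s)$, which is in particular smooth. The germ $\tilde f$ at the origin has the normal form in \cite[Corollary $4.3$]{EleniceCidinha2} (when $(\lambda_1,\dots,\lambda_{s-1},n,m)$ is such that $\tilde f$ is actually singular; the case where it happens to be nonsingular is vacuous since we assume $X(S)$ singular or is discarded), hence is $\mathcal{A}$-finite; by the Mather–Gaffney criterion there is a neighbourhood $U$ of $0$ on which $\tilde f$ has an isolated singularity at $0$ and $\tilde f|_{U\setminus\{0\}}$ is an embedding, so $X(S)=f(U)$ has an isolated singularity at the origin, and the torus action promotes "isolated singularity of the germ" to "isolated singularity of $X(S)$" exactly as in Proposition \ref{Afinitamentedeterminada}(2). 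The case $s=2$ is the specialization where there are no $x_i$-coordinates beyond $x_1$ and the generating set becomes $\{(1,0),(\lambda_1,1),(0,n),(0,m)\}$, recovering the claim in the last sentence and matching Theorem \ref{global} item $1$ with $l=p-2-l=$ the $p=4$ extremal count.

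The bulk of the argument is bookkeeping; the one point needing care — and the main obstacle — is the transfer between the two $\mathcal{A}$-equivalence statements and the semigroup isomorphism, i.e.\ making sure Proposition \ref{Proposition-Isom-Semig} genuinely applies (the hypothesis $s<p$ holds since $p=2s>s$, and $\tilde f|_V$ is $\mathcal{A}$-finite) and that the dense-orbit/torus-action argument legitimately upgrades the local equivalence $F|_V=g\underset{\mathcal{A}}{\sim}\tilde f|_V=f|_V$ to the global equality $X(S)=X(S_g)$ of affine toric varieties; once that is in place, reading off the generating set from the normal form of $g$ is immediate, and no continued-fraction or division computation is needed here (that enters only in Section $4$).
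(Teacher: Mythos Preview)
Your proposal is correct and follows essentially the same approach as the paper: the paper's own proof simply states that it is analogous to Theorem \ref{global}, applying \cite[Corollary $4.3$]{EleniceCidinha2} (and \cite[Theorem $2.4$]{EleniceCidinha1} for $s=2$), which is precisely the reduction you carry out via Propositions \ref{embedding}, \ref{Afinitamentedeterminada} and \ref{Proposition-Isom-Semig} together with the torus-action/dense-orbit argument.
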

\begin{proof}
	The proof is analogous to Theorem \ref{global}, applying \cite[Corollary $4.3$]{EleniceCidinha2}. For toric surfaces in $\mathbb{C}^4$, see also \cite[Theorem $2.4$]{EleniceCidinha1}.
\end{proof}

We denote by $\{e_1,\ldots, e_{s-q}, e_{s-q+1}, \ldots, e_s\}$ the canonical basis in $\mathbb{Z}^s$ for some $q \in \{1,\ldots, s\}$.

\begin{proposition}
	\label{Semigroup-Sq}
	For a fixed $q$, with $q \in \{1, \ldots, s\}$, let $S_q \subset \mathbb{Z}^s$ be the semigroup generated by the vectors:
	\begin{enumerate}
		\item $e_1, \ldots, e_{s-q}\ $;
		\item $m_{jl}\cdot e_{s-q+j}, \ $ for $j\in \{1, \ldots, q\}$, $l\in \{1, \ldots, k_j\}$, $gcd(m_{j1},\ldots, m_{jk_j})=1$ for some $k_j \geq 2$ and $m_{jl} >1$;
		\item $\lambda_{ij} \cdot e_i+e_{s-q+j},\ $ for $i\in \{1, \ldots, s-q\}$, $j\in \{1, \ldots, q\}$, and $\lambda_{ij}\geq 1$;
		\item $\mu_{jt} \cdot e_{s-q+j}+e_{s-q+t},\ $ for $j,t\in \{1, \ldots, q\}$ with $t\neq j$, and $\mu_{jt}\geq 1$.
	\end{enumerate}
	Then, $X(S_q)$ is an $s$-dimensional toric variety in $\mathbb{C}^r$ with isolated singularity and smooth normalization, where $r$ is the sum of the number of elements in $(1),(2),(3)$ and $(4)$. Moreover, $r \geq 2s$ and the embedded dimension $r_q$ of $X(S_q)$ satisfy 
	$$s(q+1) - \frac{q(q-1)}{2} \leq r_q \leq r=s(q+1)-2q+\sum_{j=1}^qk_j,$$ where the lower bound occurs when $k_j=2$ and $\mu_{jt}=1$ for all $j,t \in \{1, \ldots, q\}$ with $t\neq j$.
\end{proposition}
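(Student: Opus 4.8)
\emph{Sketch of proof.} The strategy is to verify, in order: that $C(S_q)$ is the positive orthant and $\mathbb{Z}S_q=\mathbb{Z}^s$, which identifies the normalization of $X(S_q)$ and the normalization map; that $r\ge 2s$; that the germ $\tilde f$ is an $\mathcal{A}$-finite monomial map-germ and hence that $X(S_q)$ has an isolated singularity at the origin; and, finally, the two bounds on the embedded dimension $r_q$.

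The generators of type $(1)$ give $e_1,\dots,e_{s-q}\in C(S_q)$ and those of type $(2)$ give $m_{j1}e_{s-q+j}\in C(S_q)$, so $e_{s-q+j}\in C(S_q)$ for every $j$; hence $C(S_q)=\mathbb{R}_{\ge 0}^s$, which is strongly convex of dimension $s$. Also $e_{s-q+j}\in\mathbb{Z}S_q$ — using $gcd(m_{j1},\dots,m_{jk_j})=1$, or, when $s>q$, directly from the type-$(3)$ generators, which have an entry $1$ in position $s-q+j$ — so $\mathbb{Z}S_q=\mathbb{Z}^s$ and $C(S_q)\cap\mathbb{Z}^s=\mathbb{N}^s$. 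By Proposition~\ref{normalization}, the normalization of $X(S_q)=\mathrm{Spec}(\mathbb{C}[S_q])$ is $X(\mathbb{N}^s)=\mathbb{C}^s$ (so it is smooth), $\dim X(S_q)=s$, and the normalization map $f\colon\mathbb{C}^s\to X(S_q)$ is the monomial map associated with the listed generating set (written explicitly below).

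Counting, $r=(s-q)+\sum_{j=1}^q k_j+(s-q)q+q(q-1)=s(q+1)-2q+\sum_{j=1}^q k_j$, and since $k_j\ge 2$ and $q\ge 1$ this gives $r\ge s(q+1)\ge 2s$. In source coordinates $x_1,\dots,x_{s-q}$ (associated with $e_1,\dots,e_{s-q}$) and $y_1,\dots,y_q$ (associated with $e_{s-q+1},\dots,e_s$), after reordering the target coordinates,
\[
\tilde f=\bigl(x_1,\dots,x_{s-q};\ (y_j^{m_{jl}})_{j,l};\ (x_i^{\lambda_{ij}}y_j)_{i,j};\ (y_j^{\mu_{jt}}y_t)_{j\ne t}\bigr),
\]
which is a monomial map-germ of the general form produced in \cite[Theorem~$4.4$]{EleniceCidinha2} — one checks the conditions required there, e.g. that each mixed monomial $x_i^{\lambda_{ij}}y_j$ and $y_j^{\mu_{jt}}y_t$ is not contained in the $\mathbb{C}$-subalgebra generated by the remaining components — so $\tilde f$ is $\mathcal{A}$-finite. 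Together with $r\ge 2s$, the Mather--Gaffney criterion gives a representative with isolated singularity at $0$ whose restriction to $U\setminus\{0\}$ is an embedding, and the torus-action argument used in the proof of Theorem~\ref{global} then shows $\Sigma X(S_q)=\{0\}$ (equivalently, invoke Proposition~\ref{Afinitamentedeterminada} once $\tilde f$ is known to be $\mathcal{A}$-finite, together with the torus action).

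Finally, $r_q=\dim_{\mathbb{C}}\mathfrak{m}/\mathfrak{m}^2$, where $\mathfrak{m}$ is the maximal ideal at the origin; this is the number of irreducible elements of $S_q$ (those $v\ne 0$ with no decomposition $v=a+b$, $a,b\in S_q\setminus\{0\}$), equivalently the least size of a generating set, so $r_q\le r$ because the listed set generates. For the lower bound I would sort the irreducibles by support pattern. A vector with one nonzero entry, equal to $1$, in a position $\le s-q$ must be some $e_i$, and all $s-q$ of these are irreducible. A vector with one nonzero entry, necessarily $>1$, in a position $>s-q$ must be a minimal generator of the numerical semigroup $\langle m_{j1},\dots,m_{jk_j}\rangle$; since its generators exceed $1$ and have $gcd$ equal to $1$, it has at least two minimal generators, so type $(2)$ contributes at least $2q$. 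A vector with two nonzero entries, one in a position $\le s-q$ and one in a position $>s-q$, must be a type-$(3)$ generator $\lambda_{ij}e_i+e_{s-q+j}$, and each of these is irreducible (subtracting any nonzero piece leaves $c\,e_i+e_{s-q+j}\notin S_q$ for $c<\lambda_{ij}$), so type $(3)$ contributes exactly $(s-q)q$. For each unordered pair $\{j,t\}$, at least one of $\mu_{jt}e_{s-q+j}+e_{s-q+t}$ and $\mu_{tj}e_{s-q+t}+e_{s-q+j}$ is irreducible — a nontrivial decomposition of the first forces $\mu_{tj}=1$, and when $\mu_{jt}=\mu_{tj}=1$ the two coincide in the (still irreducible) element $e_{s-q+j}+e_{s-q+t}$ — so type $(4)$ contributes at least $\binom{q}{2}$. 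These four support patterns are mutually exclusive and within each type the irreducibles listed are distinct, so summing yields $r_q\ge (s-q)+2q+(s-q)q+\binom{q}{2}=s(q+1)-\frac{q(q-1)}{2}$. When $k_j=2$ the semigroup $\langle m_{j1},m_{j2}\rangle$ has exactly two minimal generators, and when in addition $\mu_{jt}=1$ for all $j\ne t$ each pair contributes exactly one irreducible and no further relations appear, so equality holds. The main work is this last step: determining precisely which of the listed generators are irreducible and when two of them collapse — in particular the analysis of the type-$(4)$ generators and the identification of the extremal configuration — which is where essentially all the content of the proposition lies.
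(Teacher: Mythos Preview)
Your argument follows the paper's own proof almost verbatim on the main points: verify $\mathbb{Z}S_q=\mathbb{Z}^s$ and $C(S_q)\cap\mathbb{Z}^s=\mathbb{N}^s$, invoke Proposition~\ref{normalization} to identify the monomial normalization map, cite \cite[Theorem~4.4]{EleniceCidinha2} for $\mathcal{A}$-finiteness of $\tilde f$, and then use Mather--Gaffney together with the torus action exactly as in Theorem~\ref{global}. The paper's proof is in fact terser than yours---it simply asserts the first step and does not write out $\tilde f$ or the count giving $r=s(q+1)-2q+\sum k_j$---so on this part you are doing the same thing with more detail.

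Where you genuinely add something is the treatment of the embedding-dimension inequalities: the paper's proof does not address the bounds on $r_q$ at all, leaving them implicit in the statement. Your approach---identifying $r_q$ with the number of irreducible elements of $S_q$ and then counting irreducibles by support pattern---is a correct and natural way to fill this gap. The analysis of the type-$(4)$ generators is the delicate step, and your observation that a nontrivial decomposition of $\mu_{jt}e_{s-q+j}+e_{s-q+t}$ forces $\mu_{tj}=1$ (hence if both generators of an unordered pair were reducible they would coincide in the irreducible element $e_{s-q+j}+e_{s-q+t}$) is exactly what is needed to secure the contribution $\binom{q}{2}$ and pin down the extremal configuration. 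One small caveat: you should note that reducibility of $\mu_{jt}e_{s-q+j}+e_{s-q+t}$ requires, in addition to $\mu_{tj}=1$, that $\mu_{jt}-1$ lie in the numerical semigroup $\langle m_{j1},\dots,m_{jk_j}\rangle$; this does not affect the lower bound (you only need the necessary condition $\mu_{tj}=1$), but it matters if one wants to characterize precisely when equality $r_q=r$ fails.
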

\begin{proof}
	It is easy to see that $\mathbb{Z} S_q = \mathbb{Z}^s$ and
	$C(S_q)\cap \mathbb{Z}^s = \mathbb{N}^s$. By Proposition \ref{normalization}, the monomial map $f: \mathbb{C}^s \to \mathbb{C}^p$, whose exponents of its components are the generators of $S_q$, is the normalization map of $X(S_q)=f(\mathbb{C}^s)$. By \cite[Theorem $4.4$]{EleniceCidinha2} the map-germ of $f$ at the origin $\tilde{f}: (\mathbb{C}^s,0) \to (\mathbb{C}^p,0)$ is $\mathcal{A}$-finite. As in the proof of Theorem \ref{global}, we apply the Mather-Gaffney geometric criterion and properties of torus action to conclude that $X(S_q)$ is an $s$-dimensional toric variety in $\mathbb{C}^r$ with isolated singularity and smooth normalization.
	
	In \cite[Corollary 4.6]{EleniceCidinha2} we prove that $r_q \geq s(q+1) - \frac{q(q-1)}{2}$, where equality holds when $\tilde{f}(x_1,\ldots,x_{s-q},y_1,\ldots,y_q)$ is $\mathcal{A}$-equivalent to $$(I,y_1^{m_{11}},y_1^{m_{12}}, \ldots,y_q^{m_{q1}},y_q^{m_{q2}},x_1^{\lambda_1}y_1,\ldots, x_{s-q}^{\lambda_{s-q}}y_q,y_1y_2,\ldots,y_1y_q,y_2y_3,\ldots,y_2y_q,\ldots,y_{q-1}y_q),$$ where $I$ is the identity map-germ on $(\mathbb{C}^{s-q},0)$. The upper bound of $r_q$ is the cardinality of $S_q$ (see also the general form of $\tilde{f}$ in \cite[Theorem $4.4$]{EleniceCidinha2}).
\end{proof}

\begin{example}
	By the previous result, the $3$-dimensional toric variety in $\mathbb{C}^{10}$, whose semigroup $S_2$ is generated by $$\{e_1, m_{11}e_2,m_{12}e_2, m_{21}e_3,m_{22}e_3,m_{23}e_3,\lambda_{11}e_1+e_2,\lambda_{12}e_1+e_3,\mu_{12}e_2+e_3,\mu_{21}e_3+e_2\}$$ has isolated singularity and smooth normalization map of corank $2$, where $m_{jl}>1$ for $j\in \{1,2\}$ and $l\in \{1,\ldots, k_j\}$ with $k_1=2$ and $k_2=3$, $gcd(m_{11},m_{12})=gcd(m_{21},m_{22},m_{23})=1$, and $\lambda_{11},\lambda_{12},\mu_{12},\mu_{21} \in \mathbb{N}$. If $\mu_{12}=\mu_{21}=1$, then the embedded dimension of $X(S_2)$ is $9$.
\end{example}

\begin{theorem}
	\label{global-s-p}
	Let $X(S)$ be an $s$-dimensional toric variety in $\mathbb{C}^{p}$. Then $X(S)$ has isolated singularity at the origin and smooth normalization map $f: \mathbb{C}^s \to X(S)$ of corank $q$ if and only if $p\geq 2s$ and there exists a semigroup $S_q$ as in Proposition \ref{Semigroup-Sq} such that $S$ is, up to isomorphism, a disjoint union $$S=S_q \cup S_t,$$ where $S_t=\{(a_{i_1},\ldots, a_{i_s}); \ a_{i_j} \in \mathbb{N}^*, \ \mbox{for} \ 1 \leq i \leq t,\ 1 \leq j \leq s,\ \mbox{with}\ p-t=\sharp S_q\}$ or empty.
\end{theorem}
\begin{proof}
	The proof is a generalization of Theorem \ref{global}, using the general form of an $\mathcal{A}$-finite monomial map-germ from $(\mathbb{C}^s,0)$ to $(\mathbb{C}^p,0)$ of corank $q$ for $p \geq 2s$ in \cite[Theorem $4.4$]{EleniceCidinha2}.   
\end{proof}

\section{Toric surfaces in $\mathbb{C}^4$}

Given a class of varieties, describing the equations which define them is, in general, a difficult problem. For normal toric surfaces, Riemenschneider \cite{Rie2} computed
a set of minimal generators for the ideal $I_S$ of the surface $X(S)$, using a process of division by continued fractions. Motivated by this result, we provide a set of generators for the ideal $I_S$ defining $X(S)$, in the case of a family of non-normal toric surfaces in $\mathbb{C}^4$.

In Theorem \ref{global-2s}, we prove that a toric surface $X(S)=\mathcal{V}(I_S)$ in $\mathbb{C}^4$ has isolated singularity and smooth normalization if and only if the semigroup $S$ is generated by $\{(1,0),(\lambda,1),(0,n),(0,m)\}$ with $\lambda,n,m\in \mathbb{N}$, $1<n<m$ and $gcd(n,m)=1$. In the present section, we use this normal form and its combinatorics to explicitly compute a finite set of irreducible binomials generating $I_S$, using the classical process of successive division of $n$ and $m$. In what follows, we denote the coordinates of $\mathbb{C}^4$ by $X,Y,Z$ and $W$.

\begin{proposition}
	\label{Eq-Toric-C4} Let $X(S)=\mathcal{V}(I_S) \subset \mathbb{C}^4$ be a toric surface generated by the semigroup $S = \left\langle (1,0), (\lambda,1), (0,n),(0,m) \right\rangle$ with $\lambda, n,m\in \mathbb{N}$, $1< n< m$ and $gcd(n,m)=1$. Then, $I_S$ is generated by binomials of type
	\begin{equation}
		\label{Eq-Type1}
		Y^aW^b-X^{\lambda a}Z^d,
	\end{equation}
	\begin{equation}
		\label{Eq-Type2}
		Y^aZ^d-X^{\lambda a}W^b
	\end{equation} where $a,b\in \mathbb{N}\cup \{0\}$, $d\in \mathbb{N}$ satisfy $n \cdot d + l \cdot a=m \cdot b$ for $l=\pm 1$, in which $l=-1$ and $l=1$ are associated with the binomials (\ref{Eq-Type1}) and (\ref{Eq-Type2}), respectively.
\end{proposition}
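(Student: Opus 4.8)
The plan is to compute the kernel of the map $\pi_S : \mathbb{Z}^4 \to \mathbb{Z}^2$ sending the standard basis vectors to $(1,0), (\lambda,1), (0,n), (0,m)$, and then to use the structure of $I_S = \langle x^{\alpha_+} - x^{\alpha_-} : \alpha \in \ker(\pi_S)\rangle$ from \eqref{nucleo}. Label the coordinates of $\mathbb{C}^4$ by $X, Y, Z, W$ so that $X \mapsto (1,0)$, $Y \mapsto (\lambda,1)$, $Z \mapsto (0,n)$, $W \mapsto (0,m)$. A relation $\alpha = (\alpha_X, \alpha_Y, \alpha_Z, \alpha_W) \in \ker(\pi_S)$ is exactly a solution of the two linear equations $\alpha_X + \lambda\,\alpha_Y = 0$ and $\alpha_Y + n\,\alpha_Z + m\,\alpha_W = 0$ over $\mathbb{Z}$. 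The second equation forces, on separating positive and negative parts, a balance of the form $n d + l a = m b$ with $l = \pm 1$: indeed $\alpha_Y = a$ (the exponent on $Y$) is constrained by $a = -(n\alpha_Z + m\alpha_W)$, and once $Y$ appears on one side of the binomial the first equation forces $X^{\lambda a}$ on the opposite side (since $\alpha_X = -\lambda \alpha_Y = -\lambda a$), which is exactly the shape of \eqref{Eq-Type1} and \eqref{Eq-Type2}.

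The key steps, in order, are as follows. First, I would show that $\ker(\pi_S)$ is spanned, as a group, by the two vectors $v_1 = (\lambda, -1, n, 0)$ (or rather its negative, whichever has the right signs) and $v_2 = (0, 0, m, -n)$ — equivalently that every integer solution of the two defining equations is a $\mathbb{Z}$-combination of these. This is a straightforward rank computation: the solution lattice is $2$-dimensional (the map $\pi_S$ is surjective onto $\mathbb{Z}^2$ because $(1,0)$ and $(\lambda,1)$ already generate $\mathbb{Z}^2$), and $v_1, v_2$ are visibly independent solutions that are primitive enough to generate. Second, I would invoke a standard fact about lattice-ideal generation (e.g. the Markov-basis / saturation argument behind \cite[Proposition~1.1.11]{Cox}, or a direct Gröbner-type reduction): $I_S$ is generated by the binomials $x^{\alpha_+} - x^{\alpha_-}$ where $\alpha$ ranges not just over a lattice basis but over enough lattice vectors to ``connect'' all fibers; concretely, one reduces an arbitrary binomial $x^{\beta} - x^{\gamma}$ with $\beta - \gamma \in \ker(\pi_S)$ to the listed ones by repeatedly subtracting off the generating binomials, using that every $\alpha \in \ker(\pi_S)$ can be written with its $Y$-exponent and $X$-exponent coupled as above. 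Third, I would translate the condition ``$\alpha_+, \alpha_-$ have the stated monomial shape'' into the arithmetic condition $nd + la = mb$ with $l = -1$ giving type \eqref{Eq-Type1} and $l = +1$ giving type \eqref{Eq-Type2}, simply by reading off which of $Z, W$ sits with $Y$ and which sits opposite, and checking the $X$-balance $\alpha_X = -\lambda\alpha_Y$ in each case.

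The main obstacle I anticipate is not the bookkeeping but the passage from ``a lattice basis of $\ker(\pi_S)$'' to ``a generating set of the \emph{ideal} $I_S$'': it is genuinely false in general that the two binomials coming from a lattice basis generate the toric ideal, so I must be careful to argue that the full family \eqref{Eq-Type1}–\eqref{Eq-Type2} (parametrized by all valid $(a,b,d)$, not just the basis) does the job. The clean way to handle this is to run the standard division/reduction argument: given any $\alpha \in \ker(\pi_S)$, use the coupling $\alpha_X = -\lambda\alpha_Y$ and $\alpha_Y = -(n\alpha_Z + m\alpha_W)$ to express $x^{\alpha_+} - x^{\alpha_-}$ as a $\mathbb{C}[X,Y,Z,W]$-combination of binomials of the two listed types, by induction on, say, $|\alpha_Z| + |\alpha_W|$, peeling off one copy of $Z^m W^{\,} \leftrightarrow (\cdot)$ or $Y Z^? \leftrightarrow X^\lambda W^?$ at each step. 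Once that reduction is in place, primality and the binomial description from \eqref{nucleo} finish the argument; verifying that each produced binomial is irreducible (as claimed in the surrounding text, though the proposition statement itself only asserts generation) is an elementary $\gcd$ check using $\gcd(n,m)=1$.
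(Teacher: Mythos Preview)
Your plan is correct and follows essentially the same route as the paper: write down the two linear equations $\alpha_X+\lambda\alpha_Y=0$ and $\alpha_Y+n\alpha_Z+m\alpha_W=0$ defining $\ker(\pi_S)$, use the first to force the $X^{\lambda a}$--$Y^a$ coupling, and then argue that every binomial $x^{\alpha_+}-x^{\alpha_-}$ lies in the ideal generated by the two displayed types. The paper does the last step by a direct case analysis on the signs of $\alpha_Z,\alpha_W$ (showing that when $Z$ and $W$ land on the same side, or one of them is absent, the resulting binomial factors through $Y^n-X^{\lambda n}Z$ or $Y^{r_1}Z^{h_0}-X^{\lambda r_1}W$), rather than your induction on $|\alpha_Z|+|\alpha_W|$; your lattice-basis detour is unnecessary (and $v_1=(\lambda,-1,n,0)$ is not actually in the kernel---you want $(-\lambda n,n,-1,0)$), but you correctly recognize that the reduction, not the basis, is the content.
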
 
\begin{proof}
	Let $x^{\alpha_{+}} - x^{\alpha_{-}}$ be a binomial in $I_S$ as in (\ref{nucleo}), where $\alpha=\alpha_{+} - \alpha_{-}=(\alpha_1,\alpha_2,\alpha_3,\alpha_4)$ belongs to $Ker(\pi_S)$ with $\alpha_{+},\alpha_{-} \in \mathbb{N}^4$. Then $\alpha_1(1,0)+\alpha_2(\lambda,1)+\alpha_3(0,n)+\alpha_4(0,m)=(0,0)$, that is, \begin{equation}\label{Syst-Diop}
		\left\{\begin{array}{l}
		\alpha_1+\lambda \alpha_2=0  \\
		\alpha_2+n\alpha_3+m\alpha_4=0.  
	\end{array}\right.\end{equation} Thus, $\alpha_1=-\lambda \alpha_2$. If $\alpha_2 \geq 0$, then $\alpha_1 \leq 0$, and vice versa. Without loss of generality, we consider $\alpha_2 \geq 0$. Note that $\alpha_2=0$ if and only if $\alpha_1=0$, and in this case, $n\alpha_3+m\alpha_4=0$. So $\alpha_3$ and $\alpha_4$ must have opposite signs. Consequently, if $\alpha_4 <0$ we get the binomial $Z^{\alpha_3}-W^{-\alpha_4}$ in $I_S$ (see also \cite[Lemma $2.2$]{DanielMartinez}). Similarly, $W^{\alpha_4}-Z^{-\alpha_3}\in I_S$ if $\alpha_3 <0$. 
	
	If $\alpha_1 \cdot \alpha_2 \neq 0$, the relation $n\alpha_3+m\alpha_4=-\alpha_2$ holds. Thus, $\alpha_3$ and $\alpha_4$ are not simultaneously positive, which implies that $Y^{\alpha_2}Z^{\alpha_3}W^{\alpha_4}-X^{\lambda \alpha_2}$ is not a generator of $I_S$. If $\alpha_3$ and $\alpha_4$ have opposite signs, then the binomials $Y^{\alpha_2}W^{\alpha_4}-X^{\lambda \alpha_2}Z^{-\alpha_3}\in I_S$ if $\alpha_3 <0$ and $Y^{\alpha_2}Z^{\alpha_3}-X^{\lambda \alpha_2}W^{-\alpha_4} \in I_S$ if $\alpha_4<0$. In particular, if we write $m=h_0n + r_1$ with $h_0,r_1 \in \mathbb{N}$ and $0 < r_1< n$ (since $1< n < m$), the integers $\alpha_2=r_1,\alpha_3=h_0$ and $\alpha_4=-1$ satisfy the equation $\alpha_2+n\alpha_3=-m\alpha_4$. Thus, the binomial $Y^{r_1}Z^{h_0}-X^{\lambda r_1}W$ is a generator of $I_S$. If $\alpha_3=0$, from the relation $\alpha_2=-m\alpha_4=-(h_0n+r_1)\alpha_4$, we obtain  $\alpha_4 <0$, and $Y^{\alpha_2}-X^{\lambda \alpha_2}W^{-\alpha_4} \in I_S$. However, this binomial is not irreducible, since  
	$$Y^{-\alpha_4h_0n} Y^{-\alpha_4r_1} - X^{-\lambda \alpha_4 h_0n}X^{-\lambda\alpha_4r_1}W^{-\alpha_4}= Y^{-\alpha_4r_1}(Y^{-\alpha_4h_0n}  - X^{-\lambda \alpha_4 h_0n}Z^{-\alpha_4h_0}),$$ where the last equality follows from  $Y^{r_1}Z^{h_0}-X^{\lambda r_1}W=0$ in $\mathcal{V}(I_S)$. Thus, it is redundant in the generators set of $I_S$. If $\alpha_4=0$, then $n\alpha_3=-\alpha_2$ which implies that $\alpha_3 <0$. Therefore, $Y^{\alpha_2}-X^{\lambda\alpha_2}Z^{-\alpha_3} \in I_S$. In particular, for $\alpha_2=n$ and $\alpha_3=-1$ we get $Y^{n}-X^{\lambda n}Z \in I_S$ (see \cite[Lemma $2.2$]{DanielMartinez}). If $\alpha_3,\alpha_4 <0$, from the relation $\alpha_2=-n\alpha_3-m\alpha_4$ we have the binomial $Y^{\alpha_2}-X^{\lambda \alpha_2}Z^{-\alpha_3}W^{-\alpha_4}$, which is not an element of a minimal set of generators of $I_S$, since 
	$$Y^{-n\alpha_3}Y^{-m\alpha_4}-(X^{-\lambda n \alpha_3}Z^{-\alpha_3})(X^{-\lambda m \alpha_4}W^{-\alpha_4})=Y^{-n\alpha_3}(Y^{-m\alpha_4}-X^{-\lambda m \alpha_4}W^{-\alpha_4}),$$ where the last equality follows from the equation $Y^{n}-X^{\lambda n}Z=0$.
\end{proof}

It follows from Proposition \ref{Eq-Toric-C4}, that each generator of $I_S$ corresponds to an equation of the form $$n \cdot d + l \cdot a = m \cdot b,$$ with $l=\pm 1$, where $l=-1$ and $l=1$ are associated with the binomials (\ref{Eq-Type1}) and (\ref{Eq-Type2}), respectively. Reciprocally, for each $d,a,b \in \mathbb{N}$ satisfying the previous equation, we can associate a binomial in $I_S$ of the type (\ref{Eq-Type1}) if $l=-1$ or (\ref{Eq-Type2}) if $l=1$. 

For each fixed $b$ we have a Diophantine equation\footnote{A Diophantine equation in the variables $x$ and $y$ given by $n\cdot x + l \cdot y = m,$ with $n,l,m \in \mathbb{Z}$, admits solution in $\mathbb{Z}$ if and only if $gcd(n,l)$ divides $m$. In particular, if $gcd(n,l)=1$. Furthermore, if $x_0,y_0 \in \mathbb{Z}$ is a particular solution of the previous equation, then $x=x_0-l\cdot t$ and $y=y_0+n \cdot t$, for all $t \in \mathbb{Z}$, are also solutions in $\mathbb{Z}$.} in the variables $d$ and $a$. The next lemma guarantees that this equation has a solution in $\mathbb{N}$ and its proof provides us with a method for determining a set of generators for $I_S$.

\begin{lemma}
	\label{Lema-Sol-Natural}
	Consider the Diophantine equation in the variables $d$ and $a$ given by \begin{equation}
		\label{Dioph-Eq}
		n\cdot d + l \cdot a   =  m \cdot b_0,  
	\end{equation} for fixed $n,m \in \mathbb{N}$ and $b_0 \in \mathbb{N}\cup \{0\}$, $gcd(n,m)=1$, $1< n< m$, where $l= \pm 1$. This equation admits a solution in $\mathbb{N}$.
\end{lemma}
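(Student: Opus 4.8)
The plan is to solve \eqref{Dioph-Eq} explicitly for $a$ as a function of $d$ — which is possible precisely because $l=\pm1$ — and then to pick a value of $d$ making both unknowns land in the range required by Proposition~\ref{Eq-Toric-C4}, namely $d\ge1$ and $a\ge0$. Since $l=\pm1$ we have $\gcd(n,l)=1$, which divides $mb_0$, so \eqref{Dioph-Eq} has integer solutions; concretely, multiplying the equation by $l$ and using $l^{2}=1$ gives the equivalent relation $a=l(mb_0-nd)$. Hence $d$ may be regarded as a free integer parameter, and the whole problem reduces to choosing an integer $d\ge1$ with $l(mb_0-nd)\ge0$.

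I would then split according to the sign of $l$. For $l=-1$ the condition $a=nd-mb_0\ge0$ reads $d\ge mb_0/n$, so the choice $d=\max\{1,\lceil mb_0/n\rceil\}$ works: it is a positive integer and yields $a=nd-mb_0\ge0$ (for $b_0=0$ this gives $(d,a)=(1,n)$, i.e.\ the binomial $Y^{n}-X^{\lambda n}Z$ already singled out in the proof of Proposition~\ref{Eq-Toric-C4}). For $l=1$ the condition $a=mb_0-nd\ge0$ reads $d\le mb_0/n$, and together with $d\ge1$ this is where the hypothesis $1<n<m$ is used: when $b_0\ge1$ we have $mb_0\ge m>n$, so $d=1$ is admissible and produces $a=mb_0-n\ge m-n\ge1$; when $b_0=0$ the equation forces $d=a=0$, so no binomial of Proposition~\ref{Eq-Toric-C4} arises and there is nothing to prove.

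I do not expect a genuine obstacle here — the statement is elementary — the only point needing care being the sign and positivity bookkeeping: $d$ must be a \emph{positive} integer whereas $a$ is only required to be nonnegative, so one must check the explicit $d$ satisfies both constraints simultaneously, and it is exactly the strict inequality $n<m$ that keeps the admissible range of $d$ nonempty in the case $l=1$. Finally, the argument is constructive: running $b_0$ over $\mathbb{N}\cup\{0\}$ and reading off, for each value, the $d$ above together with $a=l(mb_0-nd)$ and $b=b_0$, one obtains the binomials of types \eqref{Eq-Type1} and \eqref{Eq-Type2}, which is precisely the input needed for the explicit generating set of $I_S$ described in the rest of the section.
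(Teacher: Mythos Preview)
Your argument is correct for the bare existence statement: solving for $a$ in terms of $d$ and then bounding $d$ via $\lceil mb_0/n\rceil$ (for $l=-1$) or taking $d=1$ (for $l=1$, $b_0\ge1$) does produce nonnegative solutions, and you correctly handle the degenerate case $b_0=0$.

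However, the paper's proof proceeds differently and with a purpose. It writes $m=h_0n+r_1$, then applies Euclidean division (in the standard or the ``ceiling'' form, according to the sign of $l$) to $r_1b_0$ to obtain $r_1b_0=kn+l\cdot s$ with $0\le s<n$, yielding the solution $d=h_0b_0+k$, $a=s$. The key feature is that this solution automatically satisfies $0\le a<n$; in fact, immediately after the lemma the paper notes that this proof gives existence and uniqueness of the \emph{minimal solution} (Definition~\ref{Minimal-Solution}), and the entire recursive machinery of Theorem~\ref{Teo-Eq-Surf-C4} is built on tracking these minimal solutions. Your choices of $d$ (e.g.\ $d=1$ when $l=1$) typically give $a=mb_0-n$, which can be arbitrarily large, so your construction does not produce the minimal solution and your final sentence---that this is ``precisely the input needed for the explicit generating set''---overstates what the argument delivers. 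For the lemma alone your proof is fine and arguably more elementary; but the paper's Euclidean-division approach is doing double duty, and if you adopt your version you would need a separate argument to extract the minimal solution before proceeding.
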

\begin{proof}
	Since $gcd(n,l)=1$, the Diophantine equation  $n\cdot d + l \cdot a = m \cdot b_0$ admits a solution in $\mathbb{Z}$. Moreover, writing $m=h_0n+r_1$ with $h_0,r_1\in \mathbb{N}$ and $0 < r_1 < n$, we have 
	$$n\cdot d + l \cdot a  =  (h_0b_0) \cdot n + r_1b_0.$$ There exist unique integers $k$ and $s$ such that $r_1b_0=k n +l \cdot s$, with $k \geq 0$ and $0 \leq s < n$, where for $l=-1$ we use the version of the Euclidean division theorem\footnote{A version of the Euclidean division theorem: Given the integers $c$ and $d>0$, there exist unique integers $k$ and $s$ such that $c=kd-s$ with $0 \leq s < d$.} given in the footnote. Thus, the previous equation admits a solution in $\mathbb{N}$ given by $d=h_0b_0+k$ and $a=s$, for $l=\pm 1$.
\end{proof}

\begin{definition}
	\label{Minimal-Solution}
	A solution $d_0,a_0 \in \mathbb{N}\cup \{0\}$ of (\ref{Dioph-Eq}) is called minimal, if $0 \leq a_0 < n$ and if $d_1,a_1\in \mathbb{N}\cup \{0\}$ is also a solution of the same equation, then $a_0 \leq a_1$.
\end{definition}

As consequence of the proof of the previous lemma, the minimal solution $d_0,a_0$ of (\ref{Dioph-Eq}) is unique.

By Proposition \ref{Eq-Toric-C4}, fixed $b=b_0$, if $d_1,a_1\in \mathbb{N}\cup \{0\}$ is a particular solution of the Diophantine equation (\ref{Dioph-Eq}), then by (\ref{nucleo}) we have the following correspondences: 
\begin{equation}
	\label{Corresp-Diop-Binomial}
	\begin{array}{ccccc}
		n\cdot d_1 - a_1  =  m \cdot b_0   & \leftrightarrow & Y^{a_1}W^{b_0}-X^{\lambda a_1}Z^{d_1} \in I_S & \leftrightarrow & \alpha_{a_1}\in Ker(\pi_S)\\
		n\cdot d_1 + a_1  =  m \cdot b_0   & \leftrightarrow & Y^{a_1}Z^{d_1}-X^{\lambda a_1}W^{b_0}\in I_S& \leftrightarrow & \beta_{a_1}\in Ker(\pi_S),\\
	\end{array}
\end{equation} where $\alpha_{a_1}=(-\lambda a_1,a_1,-d_1,b_0)$, and $\beta_{a_1}=(-\lambda a_1, a_1, d_1,-b_0)$.

\begin{remark}
	\label{Remark-Relev-binomials}
	The minimal solution (or any other solution) of the Diophantine equation (\ref{Dioph-Eq}) does not necessarily provide a relevant binomial in $I_S$, in the sense that it might not belong to the minimal set of generators of the ideal, since its associated pair of vectors in (\ref{Corresp-Diop-Binomial}) could be written as a linear combination of other vectors.   
\end{remark}

\vspace{0.2cm}
For $n,m\in \mathbb{N}$ co-primes, consider the successive division method 
\begin{equation}
	\label{Succ-Div-Meth}
	\begin{array}{rcl}
		m & = & h_0n+r_1 \\
		n & = & h_1r_1+r_2  \\
		r_1 & = & h_2r_2+r_3 \\
		& \vdots &\\
		r_{q-2}& = & h_{q-1}r_{q-1}+r_q \\
		r_{q-1}& = & h_qr_q,
	\end{array}
\end{equation}
where $h_0,h_{i},r_i \in \mathbb{N}$, for $i=1,\ldots, q$ with $r_0:=n$, $\ 0 < r_i < r_{i-1}$, and $r_q=gcd(n,m)=1$.

\begin{theorem}
	\label{Teo-Eq-Surf-C4}
	Let $X(S) =\mathcal{V}(I_S)$ be a toric surface in $\mathbb{C}^{4}$ generated by the semigroup $S = \left\langle (1,0), (\lambda,1), (0,n),(0,m) \right\rangle$ with $\lambda,n,m\in \mathbb{N}^*$, $1< n< m$, and $gcd(n,m)=1$. According to the notations in (\ref{Succ-Div-Meth}), the set ${\mathcal G}$ of the following binomials provides a generating set for $I_S$: $Y^n-X^{\lambda n}Z$, $Y^{r_1}Z^{h_0}-X^{\lambda r_1}W$, and  
	\begin{eqnarray}
		& Y^{a_{k,j}}W^{b_{k,j}}-X^{\lambda a_{k,j}}Z^{d_{k,j}}, & \mbox{for} \ \  k \ \ \mbox{odd},  \vspace{0.2cm} \label{Teo-eq-Type1}\\ & Y^{a_{k,j}}Z^{d_{k,j}}-X^{\lambda a_{k,j}}W^{b_{k,j}}, \label{Teo-eq-Type2} & \mbox{for} \ \  k \ \ \mbox{even},  \end{eqnarray} where for each fixed $k\in \{1, \ldots, q\}$ the exponents are given by $$\left\{\begin{array}{l}
		a_{k,j}=r_{k-1} - r_{k} \cdot j, \vspace{0.1cm}\\
		b_{k,j}=b_{k-1,h_{k-1}}\cdot j+b_{k-2,h_{k-2}} , \vspace{0.1cm}\\
		d_{k,j}= d_{k-1,h_{k-1}}\cdot j+d_{k-2,h_{k-2}},\\
	\end{array}\right.$$ for $j\in \{1, \ldots, h_k\}$, with $b_{-1,h_{-1}}:=0,b_{0,h_0}:=1$, $d_{-1,h_{-1}}:=1$ and $d_{0,h_0}=h_0$. 
\end{theorem}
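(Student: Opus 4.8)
The plan is to show that every irreducible binomial in a minimal generating set $\mathcal{G}$ of $I_S$ must appear in the list, by combining the structural result of Proposition \ref{Eq-Toric-C4} with the combinatorics of the successive division algorithm \eqref{Succ-Div-Meth}. By Proposition \ref{Eq-Toric-C4}, every generator of $I_S$ is (up to sign of $l$) a binomial of type \eqref{Eq-Type1} or \eqref{Eq-Type2} attached to a solution $(d,a,b)$ of $n d + l a = m b$ with $l = \pm 1$, and conversely Lemma \ref{Lema-Sol-Natural} says each fixed $b$ yields solutions in $\mathbb{N}$. So the core of the proof is: for each $b$, which solutions $(d,a)$ actually contribute \emph{irreducible} (hence potentially minimal) binomials, and show these are exactly enumerated by the pairs $(k,j)$ in the statement. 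The two initial binomials $Y^n - X^{\lambda n}Z$ and $Y^{r_1}Z^{h_0} - X^{\lambda r_1}W$ correspond to the base cases $b=0$ (equation $nd = ma$, whose primitive solution is $d = m/\gcd = m$, $a = n$... more precisely the irreducible instance is $d=1$, $a=n$ after reduction via $m = h_0 n + r_1$) and $b = 1$ (equation $n d + l a = m$, i.e. $a = m - h_0 n = r_1$, $d = h_0$, $l = 1$), exactly as already extracted inside the proof of Proposition \ref{Eq-Toric-C4}.

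The main step is an induction on the level $k$ of the Euclidean algorithm. I would set up the recursion by recording, for each $k$, the binomial $B_{k,h_k}$ with exponents $(a_{k,h_k}, b_{k,h_k}, d_{k,h_k})$; observe that $a_{k,h_k} = r_{k-1} - r_k h_k = r_{k+1}$ by \eqref{Succ-Div-Meth} (so consecutive levels interlock), and that the defining relation $n\, d_{k,j} + (-1)^k a_{k,j} = m\, b_{k,j}$ holds for all $j \in \{1,\dots,h_k\}$ — this is a straightforward check by induction using $m = h_0 n + r_1$, $r_{k-2} = h_{k-1} r_{k-1} + r_k$ and the linearity of the formulas in $j$. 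The sign alternation $l = (-1)^k$ is forced because each division step flips which of the two "residual" variables ($Z$ versus $W$) plays the role with positive exponent, matching the dichotomy \eqref{Teo-eq-Type1}--\eqref{Teo-eq-Type2}. Then I would argue, exactly as in the redundancy computations already performed at the end of the proof of Proposition \ref{Eq-Toric-C4} (factoring $Y^{a'}(\,\cdots) $ using a lower-level relation that vanishes on $\mathcal{V}(I_S)$), that any binomial whose $(d,a,b)$ is \emph{not} one of the listed minimal-type solutions — in particular any solution with $a \geq r_{k-1}$ at the $k$-th stage, or with $b$ outside the range produced by $j \in \{1,\dots,h_k\}$ — is reducible modulo the lower binomials, hence omittable from $\mathcal{G}$. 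Completeness (that no \emph{needed} generator is missed) follows because the listed binomials already generate an ideal whose zero set is $X(S)$: one checks $\mathcal{V}(\text{listed binomials}) = X(S)$ by exhibiting the monomial parametrization and using that the successive-division data exhausts all the ``primitive'' kernel vectors $\alpha_a, \beta_a$ of \eqref{Corresp-Diop-Binomial}.

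I expect the main obstacle to be the reducibility bookkeeping: proving precisely that every solution $(d,a,b)$ of $nd + la = mb$ outside the enumerated family factors through one of the binomials of strictly lower level. This requires a careful ordering argument — essentially that minimal solutions at level $k$ have $a$-component in the interval $(r_k, r_{k-1}]$, and that the Euclidean descent $r_{k-2} = h_{k-1} r_{k-1} + r_k$ partitions the possible $a$-values into exactly the blocks $\{r_{k-1} - r_k j : 1 \le j \le h_k\}$, so that a solution landing in block $k$ but not equal to one of these endpoints (or with a larger-than-necessary $b$) admits a monomial common factor $Y^{r_k}$ (or a power thereof) splitting it via the level-$(k-1)$ relation. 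Making this rigorous amounts to a clean induction statement of the form ``every irreducible binomial of $I_S$ with $Y$-exponent $a$ satisfies $r_k \le a < r_{k-1}$ for a unique $k$, and then $a = a_{k,j}$, $b = b_{k,j}$, $d = d_{k,j}$ for some $j$''; once phrased this way the verifications are the routine Euclidean-algorithm identities, and the sign of $l$ and the choice between \eqref{Teo-eq-Type1} and \eqref{Teo-eq-Type2} drop out from the parity of $k$. The finiteness of $\mathcal{G}$ is then immediate since $k$ ranges over $\{1,\dots,q\}$ and $j$ over $\{1,\dots,h_k\}$.
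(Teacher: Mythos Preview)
Your overall plan matches the paper's: reduce via Proposition~\ref{Eq-Toric-C4} to solutions $(d,a,b)$ of $nd+la=mb$, organize them by the Euclidean level $k$, verify the relation $n\,d_{k,j}+(-1)^k a_{k,j}=m\,b_{k,j}$ by induction, and show that every binomial outside the list is redundant. However, two of your steps do not go through as written.

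First, the redundancy mechanism you propose---that a non-listed binomial ``admits a monomial common factor $Y^{r_k}$ (or a power thereof) splitting it via the level-$(k-1)$ relation''---is too crude. The paper works instead at the level of kernel vectors (Propositions~\ref{Prop-No-Generators} and~\ref{Prop-No-Generators-2}): for $b_0=b_{k,j}$ with the wrong sign of $l$, or for any $b_0\neq b_{k,j}$, the associated vector $\alpha_a$ or $\beta_a$ in $\mathrm{Ker}(\pi_S)$ is written as a \emph{positive integer combination of several} listed vectors, for instance
\[
\beta_a=\sum_{i=0}^{(k-3)/2} h_{2i+1}\,\beta_{r_{2i+1}}+j\,\beta_{r_k}\qquad(k\ \text{odd},\ l=+1),
\]
and these decompositions rest on auxiliary identities that must be proved separately: Proposition~\ref{Rel-r1b_kh_k} giving $r_1 b_{k,h_k}=n\,c_{k,h_k}\pm r_{k+1}$, and telescoping formulas expressing $n-r_{k-1}$, $b_{k-2,h_{k-2}}$ and $d_{k-2,h_{k-2}}\pm 1$ as sums over all lower levels of a fixed parity. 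A single monomial factor cannot produce such multi-term combinations; the reduction at level $k$ typically calls on \emph{every} earlier level of the same parity, not just level $k-1$. Your sketch does not supply these identities, and the partition ``$r_k\le a<r_{k-1}$'' you suggest for the $a$-values, while correct for the minimal solutions of the listed $b_{k,j}$, does not by itself handle the case $b_0\neq b_{k,j}$---there one writes $b_0=b_{k,j}+\theta$ and decomposes the vector using an extra summand $\theta\cdot\alpha_{n-r_1}$ or $\theta\cdot\beta_{r_1}$.

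Second, your completeness step---deducing that the listed binomials generate $I_S$ from $\mathcal V(\text{listed})=X(S)$---is not valid: equality of zero sets gives only $\sqrt{\langle\text{listed}\rangle}=I_S$, not equality of ideals. The paper never argues this way. Once the kernel-vector decompositions above are established for every $(b_0,l)$ not on the list (together with Proposition~\ref{Binom-General-Solution} reducing any non-minimal solution to the minimal one via $\alpha_n$), every binomial generator of $I_S$ in the sense of Proposition~\ref{Eq-Toric-C4} lies in the ideal generated by the listed binomials, and this \emph{is} the containment $\mathcal G\subseteq\text{list}$; no separate completeness argument is needed.
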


Before proving this result, let us remark that by Proposition \ref{Eq-Toric-C4}, we need to determine a solution for the Diophantine equation in the variables $d$ and $a$ given by $n \cdot d + l \cdot a = m \cdot b$ for any fixed $b \in \mathbb{N}\cup \{0\}$ and $l=\pm 1$. For the proof of Theorem \ref{Teo-Eq-Surf-C4}, we determine the minimal solution of the previous equation for $b=b_{k,j}$ defined by a recursive relation, according to the parity of $k$. For the other values of $b \in \mathbb{N}$, we prove in Proposition  \ref{Prop-No-Generators} and Proposition \ref{Prop-No-Generators-2} that the binomials, obtained by the solution of its Diophantine equation, do not belong to the minimal generating set ${\mathcal G}$ of $I_S$. The complete proof of Theorem \ref{Teo-Eq-Surf-C4} depends on Propositions \ref{Rel-r1b_kh_k}, \ref{Binom-General-Solution}, \ref{Prop-No-Generators} and \ref{Prop-No-Generators-2} in Subsection \ref{Subsection-Complem}.

\begin{proof} 
	If $b=0$, then the equation $n \cdot d +l \cdot a=0$ has no solution in $\mathbb{N}$ for $l=1$ and for $l=-1$, we have $a=n\cdot d$. Thus $d=1$ and $a=n$ is the minimal solution for this equation. Consequently, the binomial $Y^n-X^{\lambda n}Z$ belongs to $I_S$ (see also the proof of Proposition \ref{Eq-Toric-C4}).
	
	For $b \geq 1$, take $b=b_{k,j}=b_{k-1,h_{k-1}}j+b_{k-2,h_{k-2}}$, with fixed $k \in \{1, \ldots, q\}$. We want to determine the values of the parameter $j\geq 1$ and the exponents $a_{k,j},d_{k,j} \in \mathbb{N}$ such that $Y^{a_{k,j}}W^{b_{k,j}}-X^{\lambda a_{k,j}}Z^{d_{k,j}}$ or $Y^{a_{k,j}}Z^{d_{k,j}}-X^{\lambda a_{k,j}}W^{b_{k,j}}$ are generators of $I_S$. By (\ref{Corresp-Diop-Binomial}), we need to obtain a solution of the Diophantine equation \begin{equation}
		\label{Diop-Eq-Type-1-2}
		n \cdot d_{k,j} +l \cdot a_{k,j} = m \cdot b_{k,j},
	\end{equation} for $l=\pm 1$. More precisely, we will obtain the minimal solution of this equation.
	
	The proof is by induction in $k$.
	
	\vspace{0.2cm}
	\noindent $\bullet \ $ For $k=1$ consider $b_{1,j}=b_{0,h_{0}}j+b_{-1,h_{-1}}=j,$ with $j \in \mathbb{N}$.
	
	\vspace{0.2cm}
	If $l=-1$, by (\ref{Succ-Div-Meth}) we can write (\ref{Diop-Eq-Type-1-2}) in the form
	$n\cdot d_{1,j}-a_{1,j} =(h_0n+r_1)\cdot j.$ Thus, $\ a_{1,j}=n(d_{1,j}-h_0j)-r_1 j$. Since $a_{1,j}\geq 0$ we have $d_{1,j}\neq h_0j$ and $n(d_{1,j}-h_0j) \geq r_1j >0$. Then, $d_{1,j}\geq h_0j+1$ and a solution for the previous equation is $$d_{1,j}=h_0j+1\ \ \ \mbox{and} \ \ \ a_{1,j}=n-r_1j,$$ where $a_{1,j}$ is non-negative for $1 \leq j \leq \left[\frac{n}{r_1}\right]=h_1$, and $[x]$ denotes the integral part of $x \in \mathbb{R}$. Moreover, $d_{1,j},a_{1,j}\in \mathbb{N}$ is the minimal solution (Definition \ref{Minimal-Solution}) of the Diophantine equation, since $r_1j$ is uniquely written as $r_1j=n-(n-r_1j)$ with $0 < s:=n-r_1j < n$ for $j\in \{1, \ldots, h_1\}$ (see the proof of Lemma \ref{Lema-Sol-Natural}). By hypothesis, $d_{0,h_0}=h_0$ and $d_{-1,h_{-1}}=1$, thus $d_{1,j}=h_0j+1=d_{0,h_0}j+d_{-1,h_{-1}}$. In this way, we get $h_1$ binomials in $I_S$ of type (\ref{Teo-eq-Type1}) and their corresponding vectors, as in (\ref{Corresp-Diop-Binomial}), given by
	\begin{equation}
		\label{Eq-Type1-k=1}
		Y^{n-r_1j}W^j-X^{\lambda(n-r_1j)}Z^{h_0j+1} \in I_S \ \leftrightarrow \ \alpha_{n-r_1j}=(-\lambda (n-r_1j),n-r_1j,-(h_0j+1),j)
	\end{equation}
	for $j\in \{1, \ldots, h_1\}$.
	
	For $l=1$ we have the equation $n\cdot d_{1,j} +a_{1,j}=(h_0n+r_1)\cdot j$. If $j=1$, then $d_{1,1}=h_0$ and $a_{1,1}=r_1$ is the minimal solution of the equation. Thus the binomial $Y^{r_1}Z^{h_0}-X^{\lambda r_1}W \in I_S$, and its corresponding vector is $\beta_{r_1}=(-\lambda r_1,r_1,h_0,-1)$. With the same previous argument for $l=-1$, we can write $a_{1,j}=n(h_0j-d_{1,j})+r_1 j$, for $1 < j \leq h_1$. In this case, the minimal solution is $d_{1,j}=h_0j$ and $a_{1,j}=r_1j$. However, the binomial $Y^{r_1j}Z^{h_0j}-X^{\lambda r_1j}W^j \in I_S \setminus {\mathcal G}$, since its corresponding vector satisfy $$(-\lambda r_1j,r_1j,h_0j,-j)=j \cdot (-\lambda r_1,r_1,h_0,-1)=j \cdot \beta_{r_1}$$
	see  Remark \ref{Remark-Relev-binomials}.
	
	\vspace{0.2cm}
	Thus, in addition to $Y^{n}-X^{\lambda n}Z$ and $Y^{r_1}Z^{h_0}-X^{\lambda r_1}W$, we obtained $h_1$ binomials of type (\ref{Teo-eq-Type1}), in which the last value of the parameter $b_{1,j}$ was for $j=h_1$. We want to determine for which values of $b_{2,j}$ greater than $b_{1,h_1}=h_1$, we have a solution for the Diophantine equation. 
	
	\vspace{0.2cm}
	\noindent $\bullet \ $ For $k=2$ consider $b_{2,j}=b_{1,h_1}j+b_{0,h_0}=h_1j+1$, with $j \geq 1$.
	
	\vspace{0.2cm}
	In this case, we start by analysing the equation (\ref{Diop-Eq-Type-1-2}) for $l=1$. In fact,
	$$\begin{array}{rcl}
		n\cdot d_{2,j}+a_{2,j}& = &(h_0n+r_1)\cdot (h_1j+1) \vspace{0.2cm}\\
		& = & h_0(h_1j+1)n +r_1h_1j +r_1 \vspace{0.2cm}\\
		& = & h_0(h_1j+1)n +(n-r_2)j +r_1 \vspace{0.2cm}\\
		& = & ((h_0h_1+1)j+h_0)n+r_1-r_2j,
	\end{array}$$ where the third equality follows from the second step of (\ref{Succ-Div-Meth}).
	
	Since $r_1-r_2j < n-r_2j < n$, we get $d_{2,j}=(h_0h_1+1)j+h_0$ and $a_{2,j}=r_1-r_2j$ as the minimal solution for the previous equation, where $a_{2,j}\geq 0$ if $1 \leq j\leq  \left[\frac{r_1}{r_2}\right]=h_2$. Moreover, $d_{2,j}=d_{1,h_1}j+d_{0,h_0}$. Therefore, we obtain $h_2$ binomials in $I_S$ of the type (\ref{Teo-eq-Type2}) given by
	$$Y^{r_1-r_2j}Z^{(h_0h_1+1)j+h_0}-X^{\lambda(r_1-r_2j)}W^{h_1j+1}, \ \ \mbox{for} \ j=1, \ldots, h_2.$$ 
	
	\vspace{0.2cm} For $l=-1$ in (\ref{Diop-Eq-Type-1-2}), as before we use the Euclidean division theorem to write $r_1-r_2j=n-(n-(r_1-r_2j))$, for $j \in \{1,\ldots, h_2\}$. Thus the minimal solution of this equation is given by $d_{2,j}=(h_0h_1+1)j+h_0+1$ and $a_{2,j}=n-(r_1-r_2j)$, for $j \in \{1,\ldots, h_2\}$. However, in this case, $Y^{a_{2,j}}W^{b_{2,j}}-X^{\lambda a_{2,j}}Z^{d_{2,j}}\in I_S \setminus {\mathcal G}$, since its corresponding vector as in (\ref{Corresp-Diop-Binomial}) satisfy
	$$\begin{array}{rcl}
		& &(-\lambda(n-(r_1-r_2j)),n-(r_1-r_2j),-[(h_0h_1+1)j+h_0+1],h_1j+1)\vspace{0.2cm}\\
		&=& (-\lambda(n-r_1),n-r_1,-(h_0+1),1)+j \cdot (-\lambda r_2,r_2,-(h_0h_1+1),h_1)\vspace{0.2cm}\\
		&=& \alpha_{n-r_1}+j \cdot  \alpha_{r_2} \ = \ \alpha_{n-r_1}+j \cdot \alpha_{n-r_1h_1}
	\end{array}$$ that is, it is a linear combination of two vectors in (\ref{Eq-Type1-k=1}) for $j=1$ and $j=h_1$, where $n-r_1h_1=r_2$ by (\ref{Succ-Div-Meth}). 
	Thus, we do not have in ${\mathcal G}$ binomials of type (\ref{Teo-eq-Type1}) for $b_{2,j}=h_1j+1$.
	
	\vspace{0.1cm} Suppose that the result is valid for any natural number $p$ with $1 \leq p \leq k$. 
	
	\vspace{0.2cm}
	\noindent $\bullet \ $ For fixed $k \in \mathbb{N}$ even and $j\geq 1$, consider $b_{k,j}=b_{k-1,h_{k-1}}\cdot j + b_{k-2,h_{k-2}}$.
	
	\vspace{0.2cm}
	Let us determine the minimal solution of the Diophantine equation
	\[\begin{array}{rcl}
		n \cdot d_{k,j}+a_{k,j}   &=&  (h_0n+r_1) \cdot b_{k,j} \\
		&= & h_0 b_{k,j}n+r_1b_{k-1,h_{k-1}}j+r_1b_{k-2,h_{k-2}}\ .
	\end{array}\] By the proof of Lemma \ref{Lema-Sol-Natural}, we need to express the term $r_1b_{k,h_k}$ as in the Euclidean division theorem. In Proposition \ref{Rel-r1b_kh_k}, we will prove for $k\geq 1$ that  \begin{equation}
		\label{Exp-r1b_k,h_k}
		r_1b_{k,h_k}=\left\{\begin{array}{l}
			n \cdot c_{k,h_k}+r_{k+1}, \ \mbox{if}\ k\ \mbox{is even}\\
			n \cdot c_{k,h_k}-r_{k+1}, \ \mbox{if}\ k\ \mbox{is odd},\\
		\end{array}\right.
	\end{equation} where $c_{k,h_k}=c_{k-1,h_{k-1}}\cdot h_k+c_{k-2,h_{k-2}}$, with $c_{-1,h_{-1}}:=1$ and $c_{0,h_0}:=0$. The elements $c_{k,h_k}$ and $r_{k+1}$ are unique, with $0 \leq r_{k+1}< n$ given in (\ref{Succ-Div-Meth}).
	
	By (\ref{Exp-r1b_k,h_k}) and the parity of $k-1$ and $k-2$, the previous Diophantine equation can be written of the form
	\begin{equation}
		\label{Exp-Step-k}\begin{array}{rcl}
			n \cdot d_{k,j}+a_{k,j}   &=& h_0b_{k,j}n+r_1b_{k-1,h_{k-1}}j+r_1b_{k-2,h_{k-2}}  \vspace{0.2cm}\\
			&= & h_0b_{k,j}n + (n \cdot c_{k-1,h_{k-1}}-r_{k})\cdot j+n c_{k-2,h_{k-2}}+r_{k-1} \vspace{0.2cm}\\
			&=& n \cdot (h_0b_{k,j}+c_{k-1,h_{k-1}} j+c_{k-2,h_{k-2}})+r_{k-1}-r_kj,
	\end{array}\end{equation}
	where $r_{k-1}-r_kj < n-r_kj< n$. Thus the minimal solution of this equation is \begin{equation}
		\label{Solution-k-even}d_{k,j}=h_0b_{k,j}+c_{k-1,h_{k-1}}j+c_{k-2,h_{k-2}} \ \ \mbox{and} \ \ a_{k,j}=r_{k-1}-r_kj\end{equation} in which $a_{k,j}\geq 0$ for $1\leq j \leq \left[\frac{r_{k-1}}{r_k}\right]=h_k$. In particular, \begin{equation}
		\label{Ind-Step-Equation}
		d_{k,h_k}=h_0b_{k,h_k}+c_{k-1,h_{k-1}}h_k+c_{k-2,h_{k-2}}=h_0b_{k,h_k}+c_{k,h_k}.   
	\end{equation}
	
	In Proposition \ref{Prop-No-Generators}, we will prove that for $k$ even, the binomial obtained by the solution of the equation $n \cdot d_{k,j}-a_{k,j} =b_{k,j}$ belongs to $I_S\setminus {\mathcal G}$, that is, we do not have in ${\mathcal G}$ binomials of type (\ref{Teo-eq-Type1}) for this parity of $k$.
	
	In the next step, we use the induction hypothesis to determine the solution of the Diophantine equation for $b=b_{k+1,j}$.
	
	\vspace{0.2cm}
	\noindent $\bullet \ $ For $b_{k+1,j}=b_{k,h_{k}}\cdot j + b_{k-1,h_{k-1}}$ for fixed $k \in \mathbb{N}$, where $k+1$ is odd and $j\geq 1$.
	
	\vspace{0.2cm} Our purpose is to obtain the minimal solution for the Diophantine equation $n \cdot d_{k+1,j}-a_{k+1,j}=m \cdot b_{k+1,j}$. Similarly to (\ref{Exp-Step-k}), we have 
	$$\begin{array}{rcl}
		n \cdot d_{k+1,j}-a_{k+1,j}   &=& h_0b_{k+1,j}n+r_1b_{k,h_{k}}j+r_1b_{k-1,h_{k-1}} \vspace{0.2cm}\\
		&=& n \cdot (h_0b_{k+1,j}+c_{k,h_{k}} j+c_{k-1,h_{k-1}})-(r_{k}-r_{k+1} \cdot j).
	\end{array}$$
	The condition $r_{k}-r_{k+1}\cdot j< n-r_{k+1}\cdot j < n$ implies that the minimal solution of the previous equation is
	\begin{equation}\label{Ex-d_k+1,j}
		d_{k+1,j}=h_0b_{k+1,j}+c_{k,h_{k}}j+c_{k-1,h_{k-1}} \ \ \mbox{and} \ \ a_{k+1,j}=r_{k}-r_{k+1}\cdot j,\end{equation} for $1\leq j \leq \left[\frac{r_{k}}{r_{k+1}}\right]=h_{k+1}$. By induction hypothesis, more precisely (\ref{Ind-Step-Equation}), we have
	\begin{equation}
		\label{Exp-d_k,j}
		\begin{array}{rcl}
			d_{k+1,j}  &=&  h_0b_{k+1,j}+c_{k,h_{k}} j+c_{k-1,h_{k-1}}\vspace{0.2cm}\\
			&=& h_0(b_{k,h_{k}}\cdot j + b_{k-1,h_{k-1}})+c_{k,h_{k}} j+c_{k-1,h_{k-1}}\vspace{0.2cm}\\
			&=& (h_0b_{k,h_{k}}+c_{k,h_{k}})\cdot j + h_0 b_{k-1,h_{k-1}}+c_{k-1,h_{k-1}}\vspace{0.2cm}\\
			&=& d_{k,h_k}\cdot j + d_{k-1,h_{k-1}}
	\end{array}\end{equation} which guarantees the recurrence relation for $d_{k+1,j}$.
	
	Similarly, by Proposition \ref{Prop-No-Generators}, we do not have in ${\mathcal G}$ binomials of type (\ref{Teo-eq-Type2}) for $k+1$ odd. Moreover, we will prove in Proposition \ref{Prop-No-Generators-2}  that the binomials (\ref{Eq-Type1}) and (\ref{Eq-Type2}) are not in ${\mathcal G}$, for any $b \in \mathbb{N}$ with $b \neq b_{k,j}$ for $k \in \{1, \ldots, q\}$ and $j \in \{1, \ldots, h_k\}$.
	
	Finally, note that this recursive process to obtain a set of generators of $I_S$ stops, since the successive division method is finite. 
\end{proof}

\begin{corollary} \label{Cor-No-Equations}
	Let $X(S)=\mathcal{V}(I_S) \subset \mathbb{C}^4$ be a toric surface generated by the semigroup $S = \langle (1,0), (\lambda,1), (0,n),(0,m) \rangle$ with $n,m\in \mathbb{N}$, $1< n< m$, and $gcd(n,m)=1$. The ideal $I_S$ has $\sum_{k=1}^{q}h_k+2$ generators, given by $Y^n-X^{\lambda n}Z$ and $Y^{r_1}Z^{h_0}-X^{\lambda r_1}W$  plus the binomials of the successive division method (\ref{Succ-Div-Meth}) as follows $$h_{k} \ \ \mbox{binomials of} \ \ \left\{\begin{array}{ll}
		\mbox{type} \  (\ref{Teo-eq-Type1}) & \mbox{for} \ \  k \ \ \mbox{odd} \\
		\mbox{type} \  (\ref{Teo-eq-Type2}) & \mbox{for} \ \  k \ \ \mbox{even},
	\end{array}\right.$$
	with $k=1,\ldots,  q$. 
\end{corollary}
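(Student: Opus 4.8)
The plan is to read off the size of the explicit list of binomials produced by Theorem~\ref{Teo-Eq-Surf-C4} and then to check that none of its members can be discarded. First I would count the list $\mathcal L$ occurring in Theorem~\ref{Teo-Eq-Surf-C4}: it consists of the two distinguished binomials $Y^{n}-X^{\lambda n}Z$ and $Y^{r_1}Z^{h_0}-X^{\lambda r_1}W$, together with, for each fixed $k\in\{1,\dots,q\}$, exactly one binomial of type~(\ref{Teo-eq-Type1}) when $k$ is odd, respectively of type~(\ref{Teo-eq-Type2}) when $k$ is even, for each value $j\in\{1,\dots,h_k\}$ of the parameter. Hence $\#\mathcal L=2+\sum_{k=1}^{q}h_k$, and since Theorem~\ref{Teo-Eq-Surf-C4} gives $\mathcal G\subseteq\mathcal L$ we already obtain $\#\mathcal G\le 2+\sum_{k=1}^{q}h_k$. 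So it remains to prove the equality $\mathcal G=\mathcal L$, i.e.\ that every binomial listed in Theorem~\ref{Teo-Eq-Surf-C4} lies in any minimal generating set.

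For this I would use that $I_S$ is homogeneous for the grading of $\mathbb C[X,Y,Z,W]$ by $S$ in which $\deg X=(1,0)$, $\deg Y=(\lambda,1)$, $\deg Z=(0,n)$ and $\deg W=(0,m)$. By the graded version of Nakayama's lemma the number of generators needed in each $S$-degree is intrinsic, and a binomial $x^{u}-x^{v}$ with $\gcd(x^u,x^v)=1$ is indispensable --- it belongs, up to a scalar, to every minimal binomial generating set --- as soon as the $S$-fibre $\pi_S^{-1}(\mathbf w)\cap\mathbb N^{4}$ of its degree $\mathbf w=\pi_S(u)=\pi_S(v)$ consists of exactly the two points $u$ and $v$: in that case $(I_S)_{\mathbf w}$ is one-dimensional, spanned by $x^{u}-x^{v}$, and no product of a variable with a binomial of strictly smaller degree can land in $(I_S)_{\mathbf w}$, since such a product would exhibit a third lattice point of the fibre or force $\gcd(x^u,x^v)\neq 1$. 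Thus the corollary reduces to verifying this two-point fibre property for each binomial of $\mathcal L$.

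That verification is precisely the content of the reductions carried out in the proof of Proposition~\ref{Eq-Toric-C4} and completed in Propositions~\ref{Binom-General-Solution}, \ref{Prop-No-Generators} and \ref{Prop-No-Generators-2}. Concretely: for $Y^{n}-X^{\lambda n}Z$ one has $\mathbf w=(\lambda n,n^{2})$, and the two equations $\alpha_1+\lambda\alpha_2=\lambda n$, $\alpha_2+n\alpha_3+m\alpha_4=n^{2}$ cutting out $Ker(\pi_S)$ admit, because $n<m$ and $\alpha_i\ge 0$, only the solutions $(\lambda n,0,1,0)$ and $(0,n,0,0)$; the binomial $Y^{r_1}Z^{h_0}-X^{\lambda r_1}W$ is handled in the same way. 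For the generic $(k,j)$-binomial $Y^{a_{k,j}}W^{b_{k,j}}-X^{\lambda a_{k,j}}Z^{d_{k,j}}$ (and its $Z\leftrightarrow W$ counterpart) the crucial input is that $(d_{k,j},a_{k,j})$ is the \emph{minimal} solution of the Diophantine equation~(\ref{Dioph-Eq}) in the sense of Definition~\ref{Minimal-Solution}, so that $0\le a_{k,j}<n$; together with the bound $1\le j\le h_k$ coming from the successive-division scheme~(\ref{Succ-Div-Meth}) this keeps $b_{k,j}$ and the remaining exponents small enough that no third monomial occurs in the fibre and no common factor can be split off. Finally, the exponents $d_{k,j}$ are strictly increasing along the list, so all binomials of $\mathcal L$ sit in pairwise distinct $S$-degrees and therefore contribute $2+\sum_{k=1}^{q}h_k$ genuinely independent minimal generators; combined with $\mathcal G\subseteq\mathcal L$ this forces $\#\mathcal G=\sum_{k=1}^{q}h_k+2$.

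The main obstacle I expect is the fibre computation of the third paragraph: proving that for each binomial of $\mathcal L$ there is no \emph{intermediate} monomial in its $S$-fibre. This is exactly where the minimality of the Diophantine solutions, the ranges $1\le j\le h_k$, and the exhaustiveness of the case analysis of Proposition~\ref{Eq-Toric-C4} have to be used in full strength; everything else is bookkeeping with the $S$-grading.
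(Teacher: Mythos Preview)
Your first paragraph already proves the corollary as the paper states it. The paper gives no separate proof: Theorem~\ref{Teo-Eq-Surf-C4} asserts $\mathcal G\subseteq\mathcal L$ with $\mathcal G$ a minimal generating set, so $\mathcal L$ generates $I_S$, and counting gives $\#\mathcal L=2+\sum_{k=1}^{q}h_k$. The corollary only claims that $I_S$ \emph{has} a generating set of this size and lists it; it does not assert that $\mathcal L$ is minimal.

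Everything from your second paragraph on is an attempt to prove the stronger statement $\mathcal G=\mathcal L$, which the paper neither states nor proves. Your strategy via the $S$-grading and graded Nakayama is reasonable for that stronger goal, but the key step is not supported by the references you cite. Propositions~\ref{Binom-General-Solution}, \ref{Prop-No-Generators} and \ref{Prop-No-Generators-2} show that the vectors attached to \emph{other} binomials (those with $b=b_{k,j}$ but the wrong sign $l$, or with $b\neq b_{k,j}$, or with non-minimal $(d,a)$) decompose as $\mathbb Z$-combinations of vectors from $\mathcal L$. That is a redundancy statement about binomials \emph{outside} $\mathcal L$; it does not establish that the $S$-fibre over the degree of a binomial \emph{in} $\mathcal L$ contains exactly two lattice points. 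The latter would require, for each $(k,j)$, ruling out any $(\alpha_1,\alpha_2,\alpha_3,\alpha_4)\in\mathbb N^4$ with $0<\alpha_2<a_{k,j}$ in the fibre, and nothing in Section~\ref{Subsection-Complem} addresses that directly. (Incidentally, for $Y^n-X^{\lambda n}Z$ the degree is $\mathbf w=(\lambda n,n)$, not $(\lambda n,n^2)$.)

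So: for the corollary itself, stop after your count. If you want to pursue minimality, treat it as a separate claim and supply a genuine fibre argument rather than invoking the complementary propositions, which point the other way.
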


\begin{example}
	We apply the previous theorem for some classes of toric surfaces generated by the following semigroups:
	
	\vspace{0.2cm}
	\noindent $(1)\ $ $ \ S=\langle (1,0),(3,1),(0,225),(0,328)\rangle$.
	
	\vspace{0.2cm}
	
	By Corollary \ref{Cor-No-Equations}, the ideal $I_S$ has $\sum_{k=1}^6h_k+2=16$ generators. From Theorem \ref{Teo-Eq-Surf-C4} the elements in $I_S$ are given by $Y^n-X^{\lambda n}Z=Y^{225}-X^{675}Z$, $Y^{r_1}Z^{h_0}-X^{\lambda r_1}W=Y^{103}Z-X^{309}W,$ $Y^{a_{k,j}}W^{b_{k,j}}-X^{\lambda a_{k,j}}Z^{d_{k,j}}$, and $Y^{a_{k,j}}Z^{d_{k,j}}-X^{\lambda a_{k,j}}W^{b_{k,j}}$ in which the exponents $a_{k,j}, b_{k,j}$ and $d_{k,j}$ are described in the following table, with $b_{0,h_0}=1=d_{-1,h_{-1}}$, $b_{-1,h_{-1}}=0$, and $d_{0,h_0}=h_0=1$.
	
	\begin{figure}[h]
		\hspace{1cm}
		\begin{minipage}[c]{0.6cm}
			\scriptsize{$$\begin{array}{rl}
					328=& \hspace{-0.3cm} 1\cdot 225 + 103\\
					225 = & \hspace{-0.3cm}  2 \cdot 103 + 19\\
					103 = & \hspace{-0.3cm}  5 \cdot 19 + 8\\
					19 = & \hspace{-0.3cm} 2 \cdot 8 + 3\\
					8 = & \hspace{-0.3cm} 2 \cdot 3 + 2\\
					3 = & \hspace{-0.3cm} 1 \cdot 2+ 1\\
					2 =& \hspace{-0.3cm} 2 \cdot 1
				\end{array}$$
				
				$$\begin{array}{l}
					h_0=1,\ r_1=103,\\
					h_1=2,\ r_2=19,\\
					h_2=5,\ r_3=8,\\
					h_3=2,\ r_4=3, \\
					h_4=2,\ r_5=2, \\ 
					h_5=1,\ r_6=1,\\ 
					h_6=2 
				\end{array}$$}
		\end{minipage}
		\hspace{2.1cm}
		\begin{minipage}[c]{4cm}
			\scriptsize{\[
				\begin{array}{l|l}
					\hspace{1cm}a_{k,j}, \ b_{k,j},\ d_{k,j} & \mbox{Binomials in}\ I_S  \\ \hline
					a_{1,j}=n-r_1j=225-103j, \ \ j=1,2=h_1 & \\
					b_{1,j}=b_{0,h_0}j+b_{-1,h_{-1}}=j & Y^{122}W-X^{366}Z^2 \\
					d_{1,j}=d_{0,h_0}j+d_{-1,h_{-1}}=h_0j+1=j+1 & Y^{19}W^2-X^{57}Z^3 \\ \hline
					&  Y^{84}Z^4-X^{252}W^3\\
					a_{2,j}=r_1-r_2j=103-19j,  \ \ j=1,\ldots,5=h_2 & Y^{65}Z^7-X^{195}W^5 \\
					b_{2,j}=b_{1,h_1}j+b_{0,h_0}=h_1j+1=2j+1 &  Y^{46}Z^{10}-X^{138}W^7 \\
					d_{2,j}=d_{1,h_1}j+d_{0,h_0}=(h_0h_1+1)j+h_0=3j+1 & Y^{27}Z^{13}-X^{81}W^9 \\
					& Y^{8}Z^{16}-X^{24}W^{11}\\ \hline
					a_{3,j}=r_2-r_3j=19-8j, \ \ j=1,2=h_3 &  \\
					b_{3,j}=b_{2,h_2}j+b_{1,h_1}=(2h_2+1)j+h_1=11j+2 & Y^{11}W^{13}-X^{33}Z^{19}\\
					d_{3,j}=d_{2,h_2}j+d_{1,h_1}=(3h_2+1)j+h_1+1=16j+3 & Y^{3}W^{24}-X^{9}Z^{35} \\ \hline
					a_{4,j}=r_3-r_4j=8-3j, \ \  j=1,2=h_4 & \\
					b_{4,j}=b_{3,h_3}j+b_{2,h_2}=(11h_3+2)j+2h_2+1=24j+11 & Y^5Z^{51}-X^{15}W^{35}\\
					d_{4,j}=d_{3,h_3}j+d_{2,h_2}=(16h_3+3)j+3h_2+1=35j+16 & Y^2Z^{86}-X^{6}W^{59} \\ \hline
					a_{5,j}=r_4-r_5j=3-2j, \ \ j=1=h_5 &\\
					b_{5,j}=b_{4,h_4}j+b_{3,h_3}=(24h_4+11)j+11h_3+2=59j+24& YW^{83}-X^{3}Z^{121} \\
					d_{5,j}=d_{4,h_4}j+d_{3,h_3}=(35h_4+16)j+16h_3+3=86j+35 & \\ \hline
					a_{6,j}=r_5-r_6j=2-j, \ \ j=1,2=h_6 &\\
					b_{6,j}=b_{5,h_5}j+b_{4,h_4}=(59h_5+24)j+24h_4+11=83j+59& YZ^{207}-X^{3}W^{142} \\
					d_{6,j}=d_{5,h_5}j+d_{4,h_4}=(86h_5+35)j+35h_4+16=121j+86 & Z^{328}-W^{225} \\ \hline
				\end{array}\]}
		\end{minipage}
	\end{figure}

	\vspace{0.2cm}
	\noindent $(2)\ $ $ \ S=\langle(1,0),(\lambda,1),(0,n),(0,m)\rangle$, with $\lambda,n,m \in \mathbb{N}$ and $1< n< m$.
	
	\vspace{0.2cm}
	\begin{enumerate}
		\item[(a)] If $m=h_0 n +1$ for some $h_0 \in \mathbb{N}$.
		
		\vspace{0.2cm} By the successive division method, we have 
		$$\begin{array}{l}
			m=h_0\cdot n + 1\\
			\ n= n \cdot 1,
		\end{array}$$
		that is, $r_1=1$ and $h_1=n$. Thus, the Corollary \ref{Cor-No-Equations} guarantees that  $I_S$ has $h_1+2=n+2$ generators, described by Theorem \ref{Teo-Eq-Surf-C4} of the form
		$$I_S=\langle Y^n-X^{\lambda n} Z, \ YZ^{h_0}-X^{\lambda}W, \ Y^{n-j}W^j-X^{\lambda(n-j)}Z^{h_0j+1}, \ \mbox{for}\ j=1, \ldots, h_1\rangle.$$
		In this case, the only binomial of type (\ref{Teo-eq-Type2}) is  $YZ^{h_0}-X^{\lambda}W$.
		
		\vspace{0.2cm}
		\item[(b)] If $m=h_0n+n-1$ for some $h_0 \in \mathbb{N}$.
		
		\vspace{0.2cm} For this class of semigroups, we have by (\ref{Succ-Div-Meth}) 
		$$\begin{array}{rcl}
			m&=&h_0\cdot n + n-1\\
			n&=& 1 \cdot (n-1)+1 \\
			n-1&=&(n-1)\cdot 1,
		\end{array}$$
		that is, $r_1=n-1$, $h_1=1$, $r_2=1$, and $h_2=n-1$. Thus, the ideal $I_S$ has $h_1+h_2+2=n+2$ generators. By Corollary \ref{Cor-No-Equations}, except the binomials $Y^n-X^{\lambda n} Z$ and $Y^{n-1}Z^{h_0}-X^{\lambda(n-1)}W$ we have $h_1=1$ binomial of type (\ref{Teo-eq-Type1}) and $h_2=n-1$ of type (\ref{Teo-eq-Type2}). More precisely, by Theorem \ref{Teo-Eq-Surf-C4}  we obtain $$\begin{array}{rcl}
			I_S & = & \langle Y^{n}-X^{\lambda n}Z, \ Y^{n-1}Z^{h_0}-X^{\lambda(n-1)}W, \ YW-X^{\lambda}Z^{h_0+1}, \vspace{0.2cm}\\
			&& Y^{n-1-j}Z^{(h_0+1)j+h_0}-X^{\lambda(n-1-j)}W^{j+1}; \ \mbox{for}\ j=1, \ldots, h_2 \rangle .
		\end{array}$$
	\end{enumerate}
\end{example}

\subsection{Complementary results} \label{Subsection-Complem}


\begin{proposition}
	\label{Rel-r1b_kh_k}
	Given $k \in \{1,\ldots, q\}$, consider $b_{k,h_{k}}=b_{k-1,h_{k-1}}\cdot h_k+b_{k-2,h_{k-2}}$, with $b_{0,h_0}=1$ and $b_{-1,h_{-1}}=0$. Using the notations of (\ref{Succ-Div-Meth}), we have  $$r_1b_{k,h_{k}}=\left\{\begin{array}{l}
		n \cdot c_{k,h_k}+r_{k+1}, \ \mbox{if}\ k\ \mbox{is even}\\
		n \cdot c_{k,h_k}-r_{k+1}, \ \mbox{if}\ k\ \mbox{is odd},\\
	\end{array}\right.$$ where $0 < r_q < r_{q-1}< \ldots < r_2 < r_1 < n$, and $c_{k,h_k}$ satisfy the recursive relation $c_{k,h_k}=c_{k-1,h_{k-1}}\cdot h_k+c_{k-2,h_{k-2}}$, with $c_{0,h_0}:=0$ and $c_{-1,h_{-1}}:=1$. Moreover, $c_{k,h_k}$ and $r_{k+1}$ are unique.
\end{proposition}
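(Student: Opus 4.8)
The plan is to prove the formula by induction on $k$, using the successive division relations in (\ref{Succ-Div-Meth}) together with the recursion $b_{k,h_k}=b_{k-1,h_{k-1}}h_k+b_{k-2,h_{k-2}}$. First I would check the base cases. For $k=1$, we have $b_{1,h_1}=b_{0,h_0}h_1+b_{-1,h_{-1}}=h_1$, so $r_1b_{1,h_1}=r_1h_1$; since $n=h_1r_1+r_2$, this equals $n-r_2$, which is of the form $n\cdot c_{1,h_1}-r_2$ with $c_{1,h_1}=1=c_{0,h_0}h_1+c_{-1,h_{-1}}$, matching the odd case. For $k=2$, $b_{2,h_2}=b_{1,h_1}h_2+b_{0,h_0}=h_1h_2+1$, so $r_1b_{2,h_2}=r_1h_1h_2+r_1=(n-r_2)h_2+r_1$; using $r_1=h_2r_2+r_3$ this becomes $nh_2-r_2h_2+h_2r_2+r_3=nh_2+r_3$, i.e.\ $n\cdot c_{2,h_2}+r_3$ with $c_{2,h_2}=h_2=c_{1,h_1}h_2+c_{0,h_0}$, matching the even case.

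For the inductive step, assume the formula holds for $k-1$ and $k-2$ (so I set up a two-step induction). Multiplying the $b$-recursion by $r_1$ gives
$$
r_1b_{k,h_k}=r_1b_{k-1,h_{k-1}}\cdot h_k+r_1b_{k-2,h_{k-2}}.
$$
Suppose $k$ is even; then $k-1$ is odd and $k-2$ is even, so by the induction hypothesis $r_1b_{k-1,h_{k-1}}=n\,c_{k-1,h_{k-1}}-r_k$ and $r_1b_{k-2,h_{k-2}}=n\,c_{k-2,h_{k-2}}+r_{k-1}$. Substituting,
$$
r_1b_{k,h_k}=n\bigl(c_{k-1,h_{k-1}}h_k+c_{k-2,h_{k-2}}\bigr)-r_kh_k+r_{k-1}
=n\,c_{k,h_k}+\bigl(r_{k-1}-h_kr_k\bigr),
$$
and the division relation $r_{k-1}=h_kr_k+r_{k+1}$ (read off from (\ref{Succ-Div-Meth}) with the appropriate index shift) gives $r_{k-1}-h_kr_k=r_{k+1}$, so $r_1b_{k,h_k}=n\,c_{k,h_k}+r_{k+1}$, with $c_{k,h_k}=c_{k-1,h_{k-1}}h_k+c_{k-2,h_{k-2}}$ exactly as claimed. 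The case $k$ odd is entirely symmetric: now $k-1$ is even and $k-2$ is odd, the two signs flip, and one obtains $r_1b_{k,h_k}=n\,c_{k,h_k}-r_{k+1}$. This also confirms the recursion $c_{k,h_k}=c_{k-1,h_{k-1}}h_k+c_{k-2,h_{k-2}}$ in all cases.

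Finally, uniqueness: since $0\le r_{k+1}<r_k<\cdots<r_1<n=r_0$ by construction in (\ref{Succ-Div-Meth}), in the even case $r_1b_{k,h_k}=n\,c_{k,h_k}+r_{k+1}$ is precisely the Euclidean division of $r_1b_{k,h_k}$ by $n$ with remainder $r_{k+1}\in[0,n)$, hence $c_{k,h_k}$ (the quotient) and $r_{k+1}$ are uniquely determined; in the odd case, $r_1b_{k,h_k}=n\,c_{k,h_k}-r_{k+1}$ with $0\le r_{k+1}<n$ is the version of Euclidean division recalled in the footnote of Lemma \ref{Lema-Sol-Natural} (writing $c=kd-s$ with $0\le s<d$), which again has a unique solution. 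The only point requiring a little care is keeping the index bookkeeping in (\ref{Succ-Div-Meth}) straight — that the relation producing $r_{k+1}$ from $r_{k-1}$ and $r_k$ is indeed the $k$-th line $r_{k-1}=h_kr_k+r_{k+1}$ — but this is immediate once one fixes $r_0:=n$, and it is the main (minor) obstacle; everything else is a routine substitution driven by the parity alternation of the sign.
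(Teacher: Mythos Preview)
Your proof is correct and follows essentially the same approach as the paper: both argue by two-step induction on $k$, verify the same base cases $k=1,2$, and carry out the inductive step by substituting the recursions for $b_{k,h_k}$ and (\ref{Succ-Div-Meth}) while tracking the parity-dependent sign; your uniqueness argument via Euclidean division (and its variant from the footnote to Lemma~\ref{Lema-Sol-Natural}) is also exactly what the paper invokes.
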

\begin{proof}
	The proof is by induction in $k$. By (\ref{Succ-Div-Meth}), we obtain for $k=1$ and $k=2$, respectively, that $r_1b_{1,h_1}=r_1h_1=n-r_2=nc_{1,h_1}-r_2$ and $r_1b_{2,h_2}=r_1(b_{1,h_1}\cdot h_2+b_{0,h_0})=r_1(h_1h_2+1)=(n-r_2)h_2+r_1=nh_2-(r_1-r_3)+r_1=nh_2+r_3=nc_{2,h_2}+r_3$, since $c_{2,h_2}=c_{1,h_1}\cdot h_2+c_{0,h_0}=h_2$. 
	
	Suppose that the result is valid for any natural number $p$ with $1 \leq p \leq k$. If $k+1$ is odd, by induction hypothesis, we have 
	$$\begin{array}{rcl}
		r_1b_{k+1,h_{k+1}} &= & r_1(b_{k,h_k}\cdot h_{k+1}+b_{k-1,h_{k-1}}) \vspace{0.1cm} \\
		&= &(n \cdot c_{k,h_k}+r_{k+1})h_{k+1}+n \cdot c_{k-1,h_{k-1}}-r_{k} \vspace{0.1cm}\\
		&= &n \cdot (c_{k,h_k}\cdot h_{k+1}+c_{k-1,h_{k-1}})+h_{k+1}r_{k+1}-r_{k} \vspace{0.1cm}\\
		&= &n \cdot c_{k+1,h_{k+1}}-r_{k+2},
	\end{array}$$ since by (\ref{Succ-Div-Meth}) we write $h_{k+1}r_{k+1}=r_{k}-r_{k+2}$. Similarly for $k+1$ even. The uniqueness of $c_{k,h_k}$ and $r_{k+1}$ follows from the Euclidean division theorem.
\end{proof}

\quad
In the next result, we prove that for each fixed $b$, if $d_1,a_1$ and $d_2,a_2$ are two solutions in $\mathbb{N}\cup \{0\}$ of the same Diophantine equation, then there is a relation between the corresponding vectors associated with these solutions (see (\ref{Corresp-Diop-Binomial})). 

Let $d_1,a_1 \in \mathbb{N}\cup\{0\}$ be any solution of (\ref{Dioph-Eq}). It is well known that the other solutions of the same equation can be given by 
$$d=d_1-l \cdot t  \hspace{0.3cm}\mbox{and}\hspace{0.3cm} a=a_1+n \cdot t $$ for all $t \in \mathbb{Z}$ with $l=\pm 1$.

\begin{proposition}
	\label{Binom-General-Solution}
	Let $d_1,a_1$ and $d_2,a_2$ in $\mathbb{N}\cup \{0\}$ be two solutions of the Diophantine equation $ n \cdot d + l \cdot a = m \cdot b_0$, for some fixed $b_0 \in \mathbb{N}$ and $l=\pm 1$. If $d_2=d_1-l \cdot t_0$ and $a_2=a_1+n \cdot t_0$, for some $t_0 \in \mathbb{N}$, then its corresponding vector, as in (\ref{Corresp-Diop-Binomial}), satisfy the following relation:
	\begin{enumerate}
		\item[(i)] $\alpha_{a_2}=\alpha_{a_1}+t_0 \cdot \alpha_n$, for $l=-1$; \vspace{0.2cm}
		\item[(ii)] $\beta_{a_2}=\beta_{a_1}+t_0 \cdot \alpha_n$, for $l=1$,
	\end{enumerate} where $\alpha_n = (-\lambda n,n,-1,0)$ is associated with the binomial $Y^n-X^{\lambda n}Z$ in Theorem \ref{Teo-Eq-Surf-C4}.
\end{proposition}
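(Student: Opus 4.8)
The plan is to prove both identities by a direct substitution, exploiting the parametrization of the solution set of the Diophantine equation recalled immediately before the statement: every solution $(d,a)$ of $n\cdot d + l\cdot a = m\cdot b_0$ has the form $d = d_1 - l\cdot t$, $a = a_1 + n\cdot t$ for a unique $t\in\mathbb{Z}$. Thus the hypothesis $d_2 = d_1 - l\cdot t_0$, $a_2 = a_1 + n\cdot t_0$ only says that $(d_2,a_2)$ is the solution corresponding to $t=t_0$. I would then plug these expressions into the explicit formulas for the vectors attached to the solutions through the correspondence (\ref{Corresp-Diop-Binomial}), and compare with the right-hand sides $\alpha_{a_1} + t_0\cdot\alpha_n$ and $\beta_{a_1} + t_0\cdot\alpha_n$ coordinate by coordinate, using that $\alpha_n = (-\lambda n, n, -1, 0)$.

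For part (i) we have $l=-1$, so $d_2 = d_1 + t_0$ and $\alpha_{a_2} = (-\lambda a_2, a_2, -d_2, b_0) = (-\lambda(a_1+nt_0), a_1+nt_0, -(d_1+t_0), b_0)$; adding $t_0\cdot\alpha_n = (-\lambda n t_0, n t_0, -t_0, 0)$ to $\alpha_{a_1} = (-\lambda a_1, a_1, -d_1, b_0)$ produces exactly this vector, the first two coordinates matching because $a_2 = a_1 + nt_0$, the third because $d_2 = d_1 + t_0$, and the fourth being unchanged. For part (ii) we have $l=1$, so $d_2 = d_1 - t_0$ and $\beta_{a_2} = (-\lambda a_2, a_2, d_2, -b_0) = (-\lambda(a_1+nt_0), a_1+nt_0, d_1-t_0, -b_0)$, which is precisely $\beta_{a_1} + t_0\cdot\alpha_n$; here the sign bookkeeping is the only point, since the third coordinate of $\alpha_n$ is $-1$ and passing from $d_1$ to $d_2 = d_1 - t_0$ indeed decreases the third coordinate.

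There is essentially no obstacle beyond keeping track of the sign of $l$ in the parametrization $d = d_1 - l\cdot t$, which dictates whether the third coordinate grows or shrinks. The content of the statement is the bookkeeping fact that, inside $Ker(\pi_S)\subset\mathbb{Z}^4$, moving between two solutions of the same Diophantine equation amounts to adding a multiple of the vector $\alpha_n$ attached to the binomial $Y^n - X^{\lambda n}Z$; on the level of binomials this reflects that one of $Y^{a_1}W^{b_0} - X^{\lambda a_1}Z^{d_1}$ or $Y^{a_1}Z^{d_1} - X^{\lambda a_1}W^{b_0}$ is obtained from the other by multiplying through by suitable powers of $Y^n$ and $X^{\lambda n}Z$ and using the relation $Y^n - X^{\lambda n}Z = 0$ in $\mathcal{V}(I_S)$. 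I would close by noting that this is precisely the mechanism behind Remark \ref{Remark-Relev-binomials}: binomials coming from non-minimal solutions (those with $t_0\geq 1$ relative to a minimal solution) are then redundant in a generating set of $I_S$.
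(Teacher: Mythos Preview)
Your proof is correct and follows essentially the same approach as the paper: a direct coordinate-by-coordinate verification, substituting $d_2=d_1-l\cdot t_0$ and $a_2=a_1+n\cdot t_0$ into the explicit formulas for $\alpha_{a_2}$ and $\beta_{a_2}$ from (\ref{Corresp-Diop-Binomial}) and comparing with $\alpha_{a_1}+t_0\cdot\alpha_n$ and $\beta_{a_1}+t_0\cdot\alpha_n$. Your closing remarks on the interpretation in terms of redundancy of non-minimal solutions are accurate and match the paper's intended use of the proposition.
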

\begin{proof}
	\noindent (i) For $l=-1$, we have $d_2=d_1+t_0$ and $a_2=a_1+n \cdot t_0$, for some $t_0 \in \mathbb{N}$. By (\ref{Corresp-Diop-Binomial}), the corresponding vectors associated with $Y^{a_1}W^{b_0}-X^{\lambda a_1}Z^{d_1}$ and $Y^{a_2}W^{b_0}-X^{\lambda a_2}Z^{d_2}$ satisfy $$\begin{array}{rcl}
		\alpha_{a_2} & = & (-\lambda a_2,a_2,-d_2,b_0)= (-\lambda(a_1+n\cdot t_0),a_1+n\cdot t_0,-(d_1+t_0),b_0)\\
		&=& (-\lambda a_1,a_1,-d_1,b_0) +t_0 \cdot (-\lambda n,n,-1,0) \\
		& = & \alpha_{a_1}+t_0\cdot \alpha_n.
	\end{array}$$
	
	\noindent (ii) For $l=1$, we have $d_2=d_1-t_0$ and $a_2=a_1+n \cdot t_0$, for some $t_0 \in \mathbb{N}$. Thus, the vectors associated with $Y^{a_1}Z^{d_1}-X^{\lambda a_1}W^{b_0}$ and $Y^{a_2}Z^{d_2}-X^{\lambda a_2}W^{b_0}$ satisfy
	$$\begin{array}{rcl}
		\beta_{a_2} & = & (-\lambda a_2,a_2,d_2,-b_0) =  (-\lambda(a_1+n\cdot t_0),a_1+n\cdot t_0,d_1-t_0,-b_0)  \\
		&=& (-\lambda a_1,a_1,d_1,-b_0) +t_0 \cdot (-\lambda n,n,-1,0)  \\
		&=& \beta_{a_1}+t_0 \cdot \alpha_n.
	\end{array}$$
\end{proof}

Let us recall that ${\mathcal G}$ denotes the set $${\mathcal G} = \{Y^{a}W^{b}-X^{\lambda a}Z^{d}, Y^{a}Z^{d}-X^{\lambda a}W^{b} \ \mbox{as in Theorem \ref{Teo-Eq-Surf-C4}}\}.$$

In Theorem \ref{Teo-Eq-Surf-C4}, for $b_0=b_{k,j}$ in (\ref{Dioph-Eq}) we determine binomials of type (\ref{Teo-eq-Type1}) for $k$ odd, and binomials of type (\ref{Teo-eq-Type2}) for $k$ even. Let us prove that only these binomials belong to ${\mathcal G}$.  

For the next results, we need to take the vectors associated with the binomials $Y^n-X^{\lambda n}Z$, $Y^{a_{k,j}}W^{b_{k,j}}-X^{\lambda a_{k,j}}Z^{d_{k,j}}$, $Y^{r_1}Z^{h_0}-X^{\lambda r_1}W$, and $Y^{a_{k,j}}Z^{d_{k,j}}-X^{\lambda a_{k,j}}W^{b_{k,j}}$ in Theorem \ref{Teo-Eq-Surf-C4}, given respectively by
\begin{equation}
	\label{Vector-a_{k,j}}
	\begin{array}{c}
		\alpha_n=(-\lambda n, n,-1,0), \     \alpha_{a_{k,j}}=(-\lambda a_{k,j},a_{k,j},-d_{k,j},b_{k,j}),  \ \mbox{for}\ k \ \mbox{odd} \vspace{0.2cm} \\
		\beta_{r_1}=(-\lambda r_1,r_1,h_0,-1), \ \beta_{a_{k,j}}=(-\lambda a_{k,j},a_{k,j},d_{k,j},-b_{k,j}),  \ \mbox{for}\ k \ \mbox{even}, \\
	\end{array}
\end{equation} where $a_{k,j}=r_{k-1}-r_kj$, for $k \in \{1, \ldots, q\}$ and $j \in \{1, \ldots, h_k\}$. In particular, for $j=h_k$ it follows from (\ref{Succ-Div-Meth}) that $r_{k-1}=h_kr_k+r_{k+1}$. Thus
\begin{equation}
	\label{Vector-a_{k,h_k}}
	\begin{array}{l}
		\alpha_{a_{k,h_k}}=(-\lambda r_{k+1},r_{k+1},-d_{k,h_k},b_{k,h_k}):=\alpha_{r_{k+1}} \vspace{0.2cm} \\
		\beta_{a_{k,h_k}}=(-\lambda r_{k+1},r_{k+1},d_{k,h_k},-b_{k,h_k}):=\beta_{r_{k+1}}.
	\end{array}
\end{equation}

\begin{proposition}
	\label{Prop-No-Generators}
	Consider in (\ref{Dioph-Eq}) the integer $b_0=b_{k,j}=b_{k-1,h_{k-1}}\cdot j+b_{k-2,h_{k-2}}$, with $b_{0,h_0}=1$ and $b_{-1,h_{-1}}=0$, for some $k \in \{1, \ldots, q\}$ and $j \in \{1, \ldots, h_k\}$ as in Theorem \ref{Teo-Eq-Surf-C4}. Let $d,a \in \mathbb{N}\cup \{0\}$ be the minimal solution of the Diophantine equation $n \cdot d+l \cdot a = m\cdot b_{k,j}$, with $l=\pm 1$. 
	\begin{enumerate}
		\item[(i)] If $k$ is odd, then $Y^aZ^d-X^{\lambda a}W^{b_{k,j}} \in I_S \setminus {\mathcal G}$, except for $k=j=1$; \vspace{0.1cm}
		\item[(ii)] If $k$ is even, then $Y^aW^{b_{k,j}}-X^{\lambda a}Z^d \in I_S \setminus {\mathcal G}$.
	\end{enumerate}
\end{proposition}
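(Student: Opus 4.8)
The plan is to prove that each of the two wrong‑sign binomials in the statement lies in $\mathfrak m\,I_S$, where $\mathfrak m=\langle X,Y,Z,W\rangle$; by the (multi)graded Nakayama lemma (the $\mathbb Z^2$‑grading of $I_S$ has degree‑zero part $\mathbb C$ and irrelevant ideal $\mathfrak m$), a binomial of $I_S$ lying in $\mathfrak m\,I_S$ is not part of any minimal generating set, hence belongs to $I_S\setminus{\mathcal G}$. The only algebraic tool is the elementary identity: if $\gamma,\delta\in Ker(\pi_S)$ share a sign pattern (so $(\gamma+\delta)_{\pm}=\gamma_{\pm}+\delta_{\pm}$), then
$$
x^{(\gamma+\delta)_+}-x^{(\gamma+\delta)_-}=x^{\delta_+}\bigl(x^{\gamma_+}-x^{\gamma_-}\bigr)+x^{\gamma_-}\bigl(x^{\delta_+}-x^{\delta_-}\bigr).
$$
Since the generators of $S$ span $\mathbb R^2$, every nonzero $\gamma\in Ker(\pi_S)$ has $\gamma_+\neq 0$ and $\gamma_-\neq 0$, so whenever $\gamma,\delta\neq 0$ both monomial factors are nonconstant and the left‑hand binomial is in $\mathfrak m\,I_S$; moreover $x^{\gamma_\pm},x^{\delta_\pm}$ are binomials in $I_S$ by (\ref{nucleo}). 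Thus it suffices to split the $Ker(\pi_S)$‑vector of each wrong‑sign binomial as a sum $\gamma+\delta$ of two \emph{nonzero} vectors of $Ker(\pi_S)$ sharing its sign pattern.

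First I would record the minimal solution of the wrong‑sign equation $n\cdot d+l\cdot a=m\cdot b_{k,j}$. Arguing as in the proof of Theorem \ref{Teo-Eq-Surf-C4} — expanding $m\,b_{k,j}=(h_0n+r_1)b_{k,j}$ and replacing $r_1b_{k-1,h_{k-1}}$, $r_1b_{k-2,h_{k-2}}$ via Proposition \ref{Rel-r1b_kh_k} — one gets $m\,b_{k,j}=n\bigl(h_0b_{k,j}+c_{k-1,h_{k-1}}j+c_{k-2,h_{k-2}}\bigr)+\varepsilon\,(r_{k-1}-r_kj)$ with $\varepsilon=+1$ for $k$ even and $\varepsilon=-1$ for $k$ odd, and $0<r_{k-1}-r_kj<n$. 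Hence for $k$ even ($l=-1$) the minimal solution is $a=n-(r_{k-1}-r_kj)$, $d=h_0b_{k,j}+c_{k-1,h_{k-1}}j+c_{k-2,h_{k-2}}+1$, with binomial $Y^aW^{b_{k,j}}-X^{\lambda a}Z^d$ and vector $\gamma=(-\lambda a,a,-d,b_{k,j})$; and for $k$ odd, $(k,j)\neq(1,1)$ ($l=1$), it is $a=n-(r_{k-1}-r_kj)$, $d=h_0b_{k,j}+c_{k-1,h_{k-1}}j+c_{k-2,h_{k-2}}-1$, with binomial $Y^aZ^d-X^{\lambda a}W^{b_{k,j}}$ and vector $\beta=(-\lambda a,a,d,-b_{k,j})$. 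For $(k,j)=(1,1)$ one gets $\beta=\beta_{r_1}$, the generator $Y^{r_1}Z^{h_0}-X^{\lambda r_1}W$, which admits no such splitting — the excluded case; for $k=1$, $j>1$, one gets $\beta=j\,\beta_{r_1}=\beta_{r_1}+(j-1)\beta_{r_1}$, a splitting into two nonzero conformal vectors.

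Next I would exhibit the splitting in the remaining cases. For $k\geq 2$ even I take $\delta=j\,\alpha_{a_{k-1,h_{k-1}}}$, where $\alpha_{a_{k-1,h_{k-1}}}=\alpha_{r_k}$ is the $Ker(\pi_S)$‑vector in (\ref{Vector-a_{k,h_k}}) of the type‑(\ref{Teo-eq-Type1}) binomial produced by Theorem \ref{Teo-Eq-Surf-C4} at the odd index $k-1$; using $a_{k,j}=r_{k-1}-r_kj$, $b_{k,j}=b_{k-1,h_{k-1}}j+b_{k-2,h_{k-2}}$ and $d_{k-1,h_{k-1}}=h_0b_{k-1,h_{k-1}}+c_{k-1,h_{k-1}}$, a direct computation gives
$$
\gamma-\delta=\bigl(-\lambda(n-r_{k-1}),\,n-r_{k-1},\,-(h_0b_{k-2,h_{k-2}}+c_{k-2,h_{k-2}}+1),\,b_{k-2,h_{k-2}}\bigr),
$$
and a one‑line check using Proposition \ref{Rel-r1b_kh_k} for the even index $k-2$ shows $\gamma-\delta\in Ker(\pi_S)$ (its relation is $a+b\,m=d\,n$); since $n-r_{k-1}>0$ and $b_{k-2,h_{k-2}}\geq 1$, $\gamma-\delta\neq 0$ and both $\gamma-\delta$ and $\delta$ have $\gamma$'s sign pattern (positive entries in the $Y,W$ slots); for $k=2$ this $\gamma-\delta$ is exactly $\alpha_{a_{1,1}}$ of (\ref{Eq-Type1-k=1}). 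Dually, for $k\geq 3$ odd I take $\delta=j\,\beta_{a_{k-1,h_{k-1}}}$ with $\beta_{a_{k-1,h_{k-1}}}=\beta_{r_k}$ as in (\ref{Vector-a_{k,h_k}}) at the even index $k-1$, and the same computation gives
$$
\beta-\delta=\bigl(-\lambda(n-r_{k-1}),\,n-r_{k-1},\,h_0b_{k-2,h_{k-2}}+c_{k-2,h_{k-2}}-1,\,-b_{k-2,h_{k-2}}\bigr)\in Ker(\pi_S),
$$
nonzero and with $\beta$'s sign pattern (positive entries in the $Y,Z$ slots), by Proposition \ref{Rel-r1b_kh_k} for the odd index $k-2$. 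In every case the elementary identity then expresses the wrong‑sign binomial $x^{\theta_+}-x^{\theta_-}$ ($\theta=\gamma$, resp.\ $\beta$) as $x^{\delta_+}g_1+x^{(\theta-\delta)_-}g_2$ with $g_1,g_2\in I_S$ and both monomial factors nonconstant, hence it lies in $\mathfrak m\,I_S$, which finishes the proof.

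The hard part is purely the bookkeeping: isolating the $\pm 1$ shift in $d$ between the two parities, verifying that the proposed summand $\gamma-\delta$ (resp.\ $\beta-\delta$) genuinely lies in $Ker(\pi_S)$ — which is exactly where the two cases of Proposition \ref{Rel-r1b_kh_k} and the recurrences for $b_{k,h_k},c_{k,h_k},d_{k,h_k}$ are used — and handling the extremal values $j=h_k$ (where $r_{k-1}-r_kj=r_{k+1}$ and $a=n-r_{k+1}$, degenerating to $a=n$ when $k=q$) together with the smallest indices $k=1,2,3$, all without circular appeal to Corollary \ref{Cor-No-Equations}.
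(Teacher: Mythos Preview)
Your argument is correct and takes a cleaner route than the paper's. Both proofs compute the same minimal solution $a=n-(r_{k-1}-r_kj)$, $d=d_{k,j}\pm 1$ for the wrong-sign equation, and both rewrite the associated kernel vector as a positive conformal sum. The difference is in how far the decomposition is pushed. The paper unfolds the recursion completely: it proves two auxiliary identities (Claims~1 and~2), namely $b_{k-2,h_{k-2}}=\sum h_{2i+1}b_{2i,h_{2i}}$ and $d_{k-2,h_{k-2}}\mp 1=\sum h_{2i+1}d_{2i,h_{2i}}$ (and the analogous even-index versions), and uses them to express the wrong-sign vector explicitly as a positive combination of the vectors $\beta_{r_{2i+1}}$ (resp.\ $\alpha_{n-r_1},\alpha_{r_{2i}}$) attached to the Theorem~\ref{Teo-Eq-Surf-C4} generators. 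You instead split off only the single summand $\delta=j\cdot\alpha_{r_k}$ (resp.\ $j\cdot\beta_{r_k}$), observe that the remainder $\gamma-\delta$ is automatically in $Ker(\pi_S)$ with the same sign pattern, and invoke graded Nakayama plus the binomial identity to conclude directly that the wrong-sign binomial lies in $\mathfrak m I_S$. This bypasses Claims~1 and~2 entirely. In fact your remainder $\gamma-\delta$ is precisely the wrong-sign vector at level $(k-2,h_{k-2})$, so iterating your split recovers the paper's full decomposition; but as you correctly note, a single step already suffices for the $\mathfrak m I_S$ conclusion.

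Two small points worth tightening. First, the check that $\gamma-\delta\in Ker(\pi_S)$ is automatic since $Ker(\pi_S)$ is a group and both $\gamma,\delta$ lie in it; Proposition~\ref{Rel-r1b_kh_k} is really only needed to compute the explicit coordinates (via $d_{k-1,h_{k-1}}=h_0b_{k-1,h_{k-1}}+c_{k-1,h_{k-1}}$) and to confirm the sign of the third entry of the remainder. Second, the degenerate endpoint $(k,j)=(q,h_q)$ that you flag, where $r_{k-1}-r_kj=0$ and the wrong-sign minimal solution collapses to $a=0$, is not resolved by your splitting (nor by the paper's); there the wrong-sign binomial coincides up to sign with the Theorem~\ref{Teo-Eq-Surf-C4} binomial $Z^{d_{q,h_q}}-W^{b_{q,h_q}}$, so it contributes nothing new, and the statement is vacuous in the intended sense. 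You might say this explicitly rather than leaving it in the closing list of caveats.
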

\begin{proof}
	In the proof of Theorem \ref{Teo-Eq-Surf-C4}, we get for $k=l=1$ that the binomial $Y^aZ^d-X^{\lambda a}W^{b_{1,j}} \in I_S \setminus {\mathcal G}$ for each $j\in \{2, \ldots, h_1\}$, and for $k=2$ and $l=-1$ that $Y^aW^{b_{2,j}}-X^{\lambda a}Z^d \in I_S \setminus {\mathcal G}$ for each $j\in \{1, \ldots, h_2\}$.
	
	\vspace{0.1cm}
	\noindent (i) Our purpose is to determine, for $k$ an odd integer, the minimal solution to the equation $n \cdot d + a = m \cdot b_{k,j}$, for some $j\in \{1, \ldots, h_k\}$.  With a similar computation as in (\ref{Exp-Step-k}) and by Proposition \ref{Rel-r1b_kh_k}, we have 
	$$\begin{array}{rcl}
		n \cdot d + a & =  & h_0b_{k,j}n+r_1b_{k-1,h_{k-1}}\cdot j + r_1b_{k-2,h_{k-2}}\vspace{0.2cm} \\
		&=& h_0b_{k,j}n+(nc_{k-1,h_{k-1}}+r_k)j + (nc_{k-2,h_{k-2}}-r_{k-1})\vspace{0.2cm} \\
		&= & n \cdot (h_0b_{k,j}+c_{k-1,h_{k-1}}\cdot j + c_{k-2,h_{k-2}}) -n+(n - (r_{k-1}-r_kj))\vspace{0.2cm} \\
		&= & n \cdot (d_{k,j}-1) +(n- (r_{k-1}-r_kj)),
	\end{array}$$ where, in the third equality, we use the Euclidean division theorem which guarantees the uniqueness of the expression $-(r_{k-1}-r_kj)=(-1)n+(n-(r_{k-1}-r_kj))$, with $0 \leq n-(r_{k-1}-r_kj) < n$, since $0 \leq r_{k-1}-r_kj < n$ for $j \in \{1,\ldots, h_k\}$, and in the fourth equality we use the expression (\ref{Solution-k-even}). Thus, the minimal solution of the previous equation is
	$$d=d_{k,j}-1 \hspace{0.3cm}\mbox{and} \hspace{0.3cm}a=n-(r_{k-1}-r_kj),$$ where $d_{k,j}=d_{k-1,h_{k-1}}\cdot j + d_{k-2,h_{k-2}}$. Thus, $Y^aZ^d-X^{\lambda a}W^{b_{k,j}} \in I_S$ for $k$ odd.
	
	The vector $\beta_a$ associated with the previous binomial can be obtained as a linear combination of vectors associated with the binomials given in Theorem \ref{Teo-Eq-Surf-C4}. In fact, 
	\begin{equation}
		\label{PairVectors-Sol-1}
		\begin{array}{c}
			\beta_a=(-\lambda a,a,d,-b_{k,j})= (-\lambda(n-r_{k-1}),n-r_{k-1},d_{k-2,h_{k-2}}-1,-b_{k-2,h_{k-2}})\vspace{0.2cm}\\
			\hspace{2.3cm}+ j \cdot (-\lambda r_{k},r_{k},d_{k-1,h_{k-1}},-b_{k-1,h_{k-1}}).\end{array}\end{equation}
	
	Note that we can write \begin{equation}
		\label{Rel-n-r_k-1} 
		\begin{array}{rcl}
			n-r_{k-1} & = & (n-r_2)+(r_2-r_4)+\ldots + (r_{k-5}-r_{k-3})+(r_{k-3}-r_{k-1})  \\
			& = & \displaystyle\sum_{i=0}^{\frac{k-3}{2}}(r_{2i}-r_{2i+2})= \displaystyle\sum_{i=0}^{\frac{k-3}{2}}h_{2i+1}r_{2i+1}, 
		\end{array}
	\end{equation} where the last equality follows from (\ref{Succ-Div-Meth}).
	
	We are going to prove the following:
	
	\vspace{0.2cm}
	\noindent {\sc Claim 1:} If $k$ is an odd integer, with $k \geq 3$, then
	\begin{equation}
		\label{Rel-b_k-2,d_k-2}
		b_{k-2,h_{k-2}}=\displaystyle\sum_{i=0}^{\frac{k-3}{2}}h_{2i+1}\cdot b_{2i,h_{2i}} \hspace{0.2cm}\mbox{and} \hspace{0.2cm} d_{k-2,h_{k-2}}-1=\displaystyle\sum_{i=0}^{\frac{k-3}{2}}h_{2i+1}\cdot d_{2i,h_{2i}}.
	\end{equation}
	
	The proof is by induction on $k$.
	
	Note that  the first relation  in (\ref{Rel-b_k-2,d_k-2}) is verified for $k=3$, since $h_1b_{0,h_0}=h_1 \cdot 1=h_1=b_{1,h_1}$. Suppose that the equality is valid for $k$. In step $k+2$ (which is an odd integer), by induction hypothesis, we get
	$$h_1b_{0,h_0}+h_3b_{2,h_2}+\ldots + h_{k-2}b_{k-3,h_{k-3}}+h_kb_{k-1,h_{k-1}}=b_{k-2,h_{k-2}}+h_kb_{k-1,h_{k-1}}=b_{k,h_{k}}.$$
	
	For the other relation, we have for $k=3$ that $h_1d_{0,h_0}=h_1 \cdot h_0=h_0h_1+1-1=d_{1,h_1}-1$. Then, 
	$$\begin{array}{l}
		h_1d_{0,h_0}+h_3d_{2,h_2}+\ldots + h_{k-2}d_{k-3,h_{k-3}}+h_kd_{k-1,h_{k-1}} \vspace{0.2cm}\\
		=d_{k-2,h_{k-2}}-1+h_kd_{k-1,h_{k-1}}=d_{k-1,h_{k-1}}\cdot h_k+d_{k-2,h_{k-2}}-1=d_{k,h_{k}}-1.\end{array}$$
	\vspace{0.2cm}
	
	Consider all even indices less than $k$ given of the form $u=2i$ for $i \in \left\{0,\ldots, \frac{k-1}{2}\right\}$. By (\ref{Vector-a_{k,h_k}}), for each $u$ and $j=h_u$ we have 
	\begin{equation}
		\label{Binom-Even-Indices}
		\beta_{a_{u,h_u}}=(-\lambda r_{2i+1},r_{2i+1},d_{2i,h_{2i}},-b_{2i,h_{2i}}):= \beta_{r_{2i+1}}.
	\end{equation} 
	
	Therefore, by (\ref{Rel-n-r_k-1}), (\ref{Rel-b_k-2,d_k-2}), and (\ref{Binom-Even-Indices}), we can write (\ref{PairVectors-Sol-1}) of the form
	{\scriptsize $$
		\begin{array}{rcl}
			\beta_a &=& \left(-\lambda \left(\displaystyle\sum_{i=0}^{\frac{k-3}{2}}h_{2i+1} r_{2i+1}\right),\displaystyle\sum_{i=0}^{\frac{k-3}{2}}h_{2i+1} r_{2i+1},\displaystyle\sum_{i=0}^{\frac{k-3}{2}}h_{2i+1} d_{2i,h_{2i}},-\displaystyle\sum_{i=0}^{\frac{k-3}{2}}h_{2i+1} b_{2i,h_{2i}}\right)+ j \cdot \beta_{r_k}\vspace{0.2cm}\\
			&=& \displaystyle\sum_{i=0}^{\frac{k-3}{2}} h_{2i+1}\cdot (-\lambda r_{2i+1},r_{2i+1},d_{2i,h_{2i}},-b_{2i,h_{2i}}) + j \cdot \beta_{r_k}.\\
		\end{array}$$}
	
	Thus, the vector $\beta_a$ associated with the minimal solution of $n \cdot d + a = m \cdot b_{k,j}$, for $k$ an odd integer, satisfy
	\begin{equation}
		\label{Exp-beta_a-k-odd}
		\beta_a= \displaystyle\sum_{i=0}^{\frac{k-3}{2}} h_{2i+1}\cdot \beta_{r_{2i+1}}+ j \cdot \beta_{r_k},   
	\end{equation}
	that is, it is written as a combination of vectors of binomials given in Theorem \ref{Teo-Eq-Surf-C4}. Thus, $Y^aZ^d-X^{\lambda a}W^{b_{k,j}} \in I_S \setminus {\mathcal G}$.
	
	\vspace{0.2cm}
	\noindent (ii) For $k$ an even integer, the equation $n \cdot d - a = m \cdot b_{k,j}$ for some $j\in \{1, \ldots, h_k\}$, can be written, similarly to the previous item, as
	$$\begin{array}{rcl}
		n \cdot d - a & =  & h_0b_{k,j}n+r_1b_{k-1,h_{k-1}}\cdot j + r_1b_{k-2,h_{k-2}}\vspace{0.2cm} \\
		&=& h_0b_{k,j}n+(nc_{k-1,h_{k-1}}-r_k)j + (nc_{k-2,h_{k-2}}+r_{k-1})\vspace{0.2cm} \\
		&= & n \cdot (h_0b_{k,j}+c_{k-1,h_{k-1}}\cdot j + c_{k-2,h_{k-2}}) +n-(n - (r_{k-1}-r_kj))\vspace{0.2cm} \\
		&= & n \cdot (d_{k,j}+1) -(n- (r_{k-1}-r_kj)),
	\end{array}$$
	where in the last equality we use (\ref{Solution-k-even}). Thus, the minimal solution of the previous equation is 
	\begin{equation}
		\label{Irrel-Solution-k-even}
		d=d_{k,j}+1 \hspace{0.3cm}\mbox{and} \hspace{0.3cm}a=n-(r_{k-1}-r_kj),  
	\end{equation} with $d_{k,j}=d_{k-1,h_{k-1}}\cdot j + d_{k-2,h_{k-2}}$, for $j\in \{1, \ldots, h_k\}$. In this case, 
	\begin{equation}
		\label{Rel-n-r_k-2} \begin{array}{rcl}
			n-r_{k-1} & = & (n-r_1)+(r_1-r_3)+\ldots + (r_{k-3}-r_{k-1})  \\
			& = & (n-r_1)+\displaystyle\sum_{i=1}^{\frac{k-2}{2}}(r_{2i-1}-r_{2i+1})= n-r_1+\displaystyle\sum_{i=1}^{\frac{k-2}{2}}h_{2i}r_{2i}.
	\end{array}\end{equation}
	
	The proof of the Claim $2$ follows similarly to Claim $1$.
	
	\vspace{0.2cm}
	\noindent {\sc Claim 2:} If $k$ is an even integer, with $k \geq 2$, then
	$$
	b_{k-2,h_{k-2}}=\displaystyle\sum_{i=1}^{\frac{k-2}{2}}h_{2i}\cdot b_{2i-1,h_{2i-1}} +1 \hspace{0.2cm}\mbox{and} \hspace{0.2cm} d_{k-2,h_{k-2}}+1= h_0+1+\displaystyle\sum_{i=1}^{\frac{k-2}{2}}h_{2i}\cdot d_{2i-1,h_{2i-1}}.$$
	
	Consider all odd indices $u$ less than $k$, that is, $u=2i-1$ with $i \in \left\{1,\ldots, \frac{k}{2}\right\}$. In (\ref{Vector-a_{k,j}}) and (\ref{Vector-a_{k,h_k}}), for each $u$ and $j \in \{1, \ldots, h_u\}$ take the vectors \begin{equation}
		\label{Rel-PairVectors-keven}
		\begin{array}{l}
			\alpha_{a_{1,1}}=(-\lambda (n-r_1),n-r_1,-(h_0+1),1):=\alpha_{n-r_1}\ \ \mbox{and}\vspace{0.2cm}\\ \alpha_{a_{u,h_u}}=(-\lambda r_{u+1},r_{u+1},-d_{2i-1,h_{2i-1}},b_{2i-1,h_{2i-1}}) =\alpha_{r_{2i}},
		\end{array}
	\end{equation} 
	where in the second vector we consider $u \neq 1$, and $r_{u+1}=r_{2i}$.
	By the Claim 2, (\ref{Rel-n-r_k-2}),  (\ref{Rel-PairVectors-keven}), and $d$, $a$ given in (\ref{Irrel-Solution-k-even}), we can express the vector associated with the binomial $Y^aW^{b_{k,j}}-X^{\lambda a}Z^d$ in the form
	{\small $$
		\begin{array}{rcl}
			\alpha_a &= & (-\lambda a, a , -d,b_{k,j}) \vspace{0.1cm}\\
			& = & (-\lambda(n-r_{k-1}),n-r_{k-1},-(d_{k-2,h_{k-2}}+1),b_{k-2,h_{k-2}}) +j\cdot (-\lambda r_k,r_k,-d_{k-1,h_{k-1}},b_{k-1,h_{k-1}})\vspace{0.2cm}\\ 
			&=& (-\lambda (n-r_1),n-r_1,-(h_0+1),1)+\displaystyle\sum_{i=1}^{\frac{k-2}{2}}h_{2i} \cdot (-\lambda r_{2i},r_{2i},-d_{2i-1,h_{2i-1}},b_{2i-1,h_{2i-1}})+ j \cdot \alpha_{r_k}.\\ \end{array}$$}
	Thus, $\alpha_a$ is written as a combination of the following vectors
	\begin{equation}
		\label{Exp-alpha-k-even}
		\alpha_a= \alpha_{n-r_1}+\displaystyle\sum_{i=1}^{\frac{k-2}{2}} h_{2i}\cdot \alpha_{r_{2i}}+ j \cdot \alpha_{r_k},
	\end{equation}
	which means, it is given in terms of vectors associated with binomials in Theorem \ref{Teo-Eq-Surf-C4}. Therefore, $Y^aW^{b_{k,j}}-X^{\lambda a}Z^d \in I_S \setminus {\mathcal G}$.
\end{proof}

\begin{proposition}
	\label{Prop-No-Generators-2}
	Consider $b_0 \in \mathbb{N}$ fixed, with $b_0 \neq b_{k,j}$, and $b_{k,j}$ as in Theorem \ref{Teo-Eq-Surf-C4}, for any $k \in \{1, \ldots, q\}$ and $j \in \{1, \ldots, h_k\}$. Let $d,a \in \mathbb{N}\cup \{0\}$ be any solution of the Diophantine equation $n \cdot d+l \cdot a = m\cdot b_0$, with $l=\pm 1$. Then $$Y^aZ^d-X^{\lambda a}W^{b_0}, \  Y^aW^{b_0}-X^{\lambda a}Z^d \in I_S \setminus {\mathcal G}.$$
\end{proposition}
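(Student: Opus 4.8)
The plan is to perform two reductions and then a combinatorial decomposition modelled on Proposition \ref{Prop-No-Generators}. First, by Proposition \ref{Binom-General-Solution} every solution of $n\cdot d+l\cdot a=m\cdot b_0$ differs from the minimal one by $t_0\cdot\alpha_n$, $t_0\in\mathbb{N}$, where $\alpha_n$ is the vector of the generator $Y^n-X^{\lambda n}Z$; hence it is enough to treat the minimal solution $d_0,a_0$ (so $0\le a_0<n$) and to exhibit both $Y^{a_0}W^{b_0}-X^{\lambda a_0}Z^{d_0}$ and $Y^{a_0}Z^{d_0}-X^{\lambda a_0}W^{b_0}$ inside the ideal generated by the \emph{other} binomials of Theorem \ref{Teo-Eq-Surf-C4}.

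For the second reduction, observe that (\ref{Succ-Div-Meth}) is the Euclidean algorithm for $(m,n)$, so $P_q:=b_{q,h_q}$ equals the denominator of $m/n$ in lowest terms, i.e.\ $P_q=n$; moreover the last line of (\ref{Succ-Div-Meth}) gives $a_{q,h_q}=r_{q-1}-h_qr_q=0$, whence $d_{q,h_q}=m$, so the binomial of Theorem \ref{Teo-Eq-Surf-C4} indexed by $k=q$, $j=h_q$ is $W^n-Z^m$ (up to sign). Writing $b_0=\kappa n+b_0'$ with $0\le b_0'<n$, the relation $n\cdot d+l\cdot a=m\cdot b_0$ shows that the minimal solution for $b_0$ is the one for $b_0'$ with $d_0$ increased by $\kappa m$; equivalently, the vector attached to $b_0$ is the vector attached to $b_0'$ plus $\kappa$ times that of $W^n-Z^m$. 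A telescoping as in the proofs of Theorem \ref{Teo-Eq-Surf-C4} and Proposition \ref{Prop-No-Generators} then reduces the claim for $b_0$ to the claim for $b_0'$ (the case $b_0'=0$ being immediate, since $b_0=\kappa n\ne b_{k,j}$ forces $\kappa\ge 2$ and $W^{\kappa n}-Z^{\kappa m}$ is divisible by $W^n-Z^m$). So from now on assume $1\le b_0<n$.

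For $b_0$ in this range I would expand it, following the successive division, as a non-negative integer combination of the $W$-exponents of the binomials of the appropriate type — for $l=-1$ of $\{b_{k,j}:k\text{ odd}\}$ together with $b=0$, and for $l=+1$ of $\{b_{k,j}:k\text{ even}\}$ together with $b=1$ — running in parallel the recursions for $a_{k,j}$ and $d_{k,j}$ and invoking Proposition \ref{Rel-r1b_kh_k} and the identities (\ref{Solution-k-even}), (\ref{Ind-Step-Equation}), exactly in the style of Claims 1 and 2 inside Proposition \ref{Prop-No-Generators}. The resulting sum $\sum c_{k,j}\alpha_{a_{k,j}}$ (resp.\ $\sum c_{k,j}\beta_{a_{k,j}}$), possibly corrected by a multiple of $\alpha_n$, should have $W$-exponent $b_0$ and $Y$-coordinate $a_0$; the $Z$-coordinate then follows from $n\cdot d+l\cdot a=m\cdot b_0$, so the sum is the minimal-solution vector. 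Since the $c_{k,j}$ are non-negative and the summands can be given the sign pattern of the target (a binomial and its negative generate the same ideal), the positive and negative parts add without cancellation, and the telescoping identities of Theorem \ref{Teo-Eq-Surf-C4} and Proposition \ref{Prop-No-Generators} — factoring $\prod_i p_i-\prod_i q_i$ with $p_i-q_i$ the relevant generators and using $Y^n=X^{\lambda n}Z$ modulo $I_S$ to absorb the $\alpha_n$-term — place both binomials in the ideal generated by the remaining binomials of Theorem \ref{Teo-Eq-Surf-C4}, i.e.\ in $I_S\setminus{\mathcal G}$. The crux, as in Proposition \ref{Prop-No-Generators}, is precisely this combinatorial step: showing that, for $1\le b_0<n$, the expansion can be chosen with non-negative coefficients satisfying $\sum c_{k,j}a_{k,j}=a_0<n$ (so that the vector sum genuinely equals the minimal-solution vector and the telescoping stays polynomial), that the induction is well founded, and that it covers the degenerate configurations (small $q$, or some $h_i=1$); I expect this to reduce, as there, to careful use of $0<r_i<r_{i-1}$ and of the recursions for $b_{k,h_k}$ and $c_{k,h_k}$ in Proposition \ref{Rel-r1b_kh_k}.
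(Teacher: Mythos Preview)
Your first two reductions are sound and align with the paper: passing to the minimal solution via Proposition~\ref{Binom-General-Solution}, and peeling off multiples of $b_{q,h_q}=n$ using the generator $Z^m-W^n$, are both legitimate. The gap is your third step. You propose to expand $b_0$ (now with $1\le b_0<n$, $b_0\ne b_{k,j}$) as an unspecified nonnegative combination $\sum c_{k,j}b_{k,j}$ subject to the side condition $\sum c_{k,j}a_{k,j}=a_0<n$, and you yourself flag this as ``the crux'', writing only ``I would expand\ldots'' and ``I expect this to reduce\ldots''. That is a plan, not a proof: the obvious expansion $b_0=b_0\cdot b_{1,1}$ already violates the side condition, you give no algorithm that produces a valid one, and you do not verify that the resulting vector identity actually lifts to a polynomial membership in the ideal generated by the remaining binomials.

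The paper bypasses this combinatorics with a much simpler and fully explicit decomposition. Since $b_0\ne b_{k,j}$ forces $b_0>h_1$, one locates $k\ge 2$ and $j$ with $b_{k,j}<b_0<b_{k,j+1}$ (with the conventions $b_{k,h_k+1}:=b_{k+1,1}$, $b_{q,h_q+1}:=\infty$) and writes $b_0=b_{k,j}+\theta$, $1\le\theta\le b_{k-1,h_{k-1}}-1$ (or $\theta\in\mathbb{N}$ if $b_0>b_{q,h_q}$). A direct computation from (\ref{Exp-r1b_k,h_k}) and (\ref{Solution-k-even}) then yields two-term identities such as $\alpha_a=\alpha_{a_{k,j}}+\theta\,\alpha_{n-r_1}$ for $k$ odd, $l=-1$, and $\beta_a=\beta_{a_{k,j}}+\theta\,\beta_{r_1}$ for $k$ even, $l=1$; the cross-parity cases combine this with the expansions (\ref{Exp-beta_a-k-odd}) and (\ref{Exp-alpha-k-even}) already obtained in Proposition~\ref{Prop-No-Generators}. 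So rather than a multi-term expansion with a delicate constraint, one only needs the single nearest $b_{k,j}$ below $b_0$, with the remainder $\theta$ absorbed by copies of the basic vectors $\alpha_{n-r_1}=\alpha_{a_{1,1}}$ or $\beta_{r_1}$.
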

\begin{proof}
	Our goal is to prove that for $b_0\neq b_{k,j}$, the vectors associated with the binomials $Y^aZ^d-X^{\lambda a}W^{b_0}$ and $Y^aW^{b_0}-X^{\lambda a}Z^d$ are obtained as a combination of vectors associated with the binomials given in Theorem \ref{Teo-Eq-Surf-C4}.
	
	If $b_0 \neq b_{k,j}\in \mathbb{N}$, then $b_0 > h_1=b_{1,h_1}$ since $b_{1,j}=j$ for $j \in \{1,\ldots, h_1\}$ were considered in Theorem \ref{Teo-Eq-Surf-C4}. Moreover, there exist $k \in \{2,\ldots, q\}$ and $j\in \{1,\ldots, h_k\}$ such that $b_{k,j} < b_0 < b_{k,j+1}$, where we define $b_{k,h_k+1}:=b_{k+1,1}$ and $b_{q,h_{q+1}}=\infty$ (see (\ref{Succ-Div-Meth})). If $b_{k,j}<  b_0 < b_{k,j+1}$ for some $k\in \{2, \ldots , q\}$ and $j \in \{1, \ldots, h_k\}$ with $j \neq h_q$, we can write $b_0=b_{k,j}+\theta$ for some $\theta \in \{1,\ldots, b_{k-1,h_{k-1}}-1\}$, since $b_{k,j}=b_{k-1,h_{k-1}}j+b_{k-2,h_{k-2}}$ and $$b_{k,j}+b_{k-1,h_{k-1}} = b_{k-1,h_{k-1}}(j+1)+b_{k-2,h_{k-2}}= b_{k,j+1}.$$ If $b_0 > b_{q,h_q}$, then we write $b_0 = b_{q,h_q}+ \theta$, for some $\theta \in \mathbb{N}$. In order to determine a solution for $n \cdot d + l \cdot a=m \cdot b_0$ with $l=\pm 1$ and $b_0=b_{k,j}+\theta$, note that  
	\begin{equation}
		\label{Sol-Dioph-No-generators}
		\begin{array}{rcl}
			n\cdot d+l\cdot a & = & h_0(b_{k,j}+\theta)n+r_1b_{k,j}+r_1\theta \vspace{0.2cm}\\
			&=&  (h_0b_{k,j}+h_0\theta)n+r_1(b_{k-1,h_{k-1}}j+b_{k-2,h_{k-2}})+r_1\theta .\\
	\end{array}\end{equation}
	
	If $k$ is an odd integer and $l=-1$, by (\ref{Exp-r1b_k,h_k}) and (\ref{Solution-k-even}), we have 
	$$\begin{array}{rcl}
		n\cdot d- a & = &  (h_0b_{k,j}+h_0\theta+c_{k-1,h_{k-1}}j+c_{k-2,h_{k-2}})n+r_{k}j-r_{k-1}+r_1\theta\vspace{0.2cm}\\
		&=&  (d_{k,j}+h_0\theta)n-(r_{k-1}-r_{k}j)+r_1\theta\vspace{0.2cm}\\
		&=&  (d_{k,j}+h_0\theta)n-(r_{k-1}-r_{k}j)-(n-r_1)\theta+n\theta\vspace{0.2cm}\\
		&=&  (d_{k,j}+h_0\theta+\theta)n-(a_{k,j}+(n-r_1)\theta).
	\end{array}$$
	Thus, a solution in $\mathbb{N}$ for this equation is $d=d_{k,j}+(h_0+1)\theta \ $ and $ \ a=a_{k,j}+(n-r_1)\theta.$
	In this case, the vector $\alpha_a$ is written in terms of $\alpha_{a_{k,j}}$ in (\ref{Vector-a_{k,j}}) and $\alpha_{a_{1,1}}=\alpha_{n-r_1}$ in (\ref{Rel-PairVectors-keven}), that is,
	$$\begin{array}{rcl}(-\lambda a,a,-d,b_0) &=&
		(-\lambda a_{k,j},a_{k,j},-d_{k,j},b_{k,j}) + \theta (-\lambda(n-r_1),n-r_1,-(h_0+1),1)\vspace{0.1cm}\\
		&=& \alpha_{a_{k,j}}+\theta \cdot \alpha_{n-r_1}.
	\end{array}$$ 
	
	Similarly to the previous case, if $l=1$ it follows from the proof of item $(i)$ in Proposition \ref{Prop-No-Generators} that
	$$\begin{array}{rcl}
		n\cdot d+a & = &  (d_{k,j}+h_0\theta)n-(r_{k-1}-r_{k}j)+r_1\theta\vspace{0.2cm}\\
		&=&  (d_{k,j}+h_0\theta)n-n+n-(r_{k-1}-r_{k}j)+r_1\theta\vspace{0.2cm}\\
		&=&  (d_{k,j}+h_0\theta-1)n+(n-r_{k-1})+r_{k}j+r_1\theta.
	\end{array}$$
	As consequence, a solution of the previous equation is $d=d_{k,j}+h_0\theta-1=d_{k-1,h_{k-1}}j+d_{k-2,h_{k-2}}-1+h_0\theta$ and $a=(n-r_{k-1})+r_{k}j+r_1\theta$.
	
	\vspace{0.1cm}
	By (\ref{PairVectors-Sol-1}), (\ref{Exp-beta_a-k-odd}), and (\ref{Vector-a_{k,j}}) the vector associated with this solution is written as
	$$\small{\begin{array}{rcl}(-\lambda a,a,d,-b_0) &=&
			(-\lambda (n-r_{k-1}),n-r_{k-1},d_{k-2,h_{k-2}}-1,-b_{k-2,h_{k-2}}) \vspace{0.2cm}\\
			&&+ j(-\lambda r_k,r_k,d_{k-1,h_{k-1}},-b_{k-1,h_{k-1}} )+ \theta (-\lambda r_1,r_1,h_0,-1)\vspace{0.2cm}\\
			&=& \displaystyle\sum_{i=0}^{\frac{k-3}{2}}h_{2i+1}\beta_{r_{2i+1}}+j \cdot \beta_{r_k}+\theta \cdot \beta_{r_1}.
	\end{array}}$$
	
	If $k$ is an even integer and $l=1$, then by (\ref{Exp-r1b_k,h_k}) and (\ref{Solution-k-even}) we can write the equation (\ref{Sol-Dioph-No-generators}) of the form
	$$\begin{array}{rcl}
		n\cdot d+ a & = &  (h_0b_{k,j}+h_0\theta)n+(c_{k-1,h_{k-1}}n-r_{k})j+nc_{k-2,h_{k-2}}+r_{k-1}+r_1\theta\vspace{0.2cm}\\
		&=&  (h_0b_{k,j}+h_0\theta+c_{k-1,h_{k-1}}j+c_{k-2,h_{k-2}})n-r_{k}j+r_{k-1}+r_1\theta.\vspace{0.2cm}\\
		&=&  (d_{k,j}+h_0\theta)n+(r_{k-1}-r_{k}j)+r_1\theta.\\
	\end{array}$$
	Thus, a solution in $\mathbb{N}$ for this equation is $d=d_{k,j}+h_0\theta \ $ and $ \ a=a_{k,j}+r_1\theta.$
	
	In this case, by (\ref{Vector-a_{k,j}}) we get
	$$\begin{array}{rcl}
		\beta_a= (-\lambda a,a,d,-b)&=&
		(-\lambda a_{k,j},a_{k,j},d_{k,j},-b_{k,j})+ \theta \cdot (-\lambda r_1,r_1,h_0,-1)\vspace{0.1cm}\\
		&=& \beta_{a_{k,j}}+ \theta \cdot \beta_{r_1}.
	\end{array}$$
	
	If $l=-1$, similarly to the proof of item $(ii)$ in Proposition \ref{Prop-No-Generators}, we have
	$$\begin{array}{rcl}
		n\cdot d- a & = &  (d_{k,j}+h_0\theta)n+(r_{k-1}-r_{k}j)+r_1\theta\vspace{0.2cm}\\
		&=&  (d_{k,j}+h_0\theta)n+n-(n-(r_{k-1}-r_{k}j))-(n-r_1)\theta+n\theta \vspace{0.2cm}\\
		&=& (d_{k,j}+h_0\theta+1+\theta)n-(n-r_{k-1}+r_{k}j+(n-r_1)\theta).
	\end{array}$$
	In this way, a solution for the previous equation is $d=d_{k,j}+1+(h_0+1)\theta$ and $a=n-r_{k-1}+r_{k}j+(n-r_1)\theta$.
	
	By (\ref{Exp-alpha-k-even}), we write
	$$\small{\begin{array}{rcl}(-\lambda a,a,-d,b_0) &=&
			(-\lambda (n-r_{k-1}),n-r_{k-1},-(d_{k-2,h_{k-2}}+1),b_{k-2,h_{k-2}}) \vspace{0.2cm}\\
			&&+ j(-\lambda r_k,r_k,-d_{k-1,h_{k-1}},b_{k-1,h_{k-1}} ) \vspace{0.2cm}\\
			&&+ \theta (-\lambda (n-r_1),n-r_1,-(h_0+1),1)\vspace{0.2cm}\\
			&=& \alpha_{n-r_1}+\displaystyle\sum_{i=0}^{\frac{k-2}{2}}h_{2i}\alpha_{r_{2i}}+j \cdot \alpha_{r_k}+\theta \cdot \alpha_{n-r_1}.
	\end{array}}$$
	
	In all the cases analysed, we express the vector of a solution (not necessarily the minimal solution) of the Diophantine equation as a combination of vectors associated with the binomials given in Theorem \ref{Teo-Eq-Surf-C4}. The non-minimality of the previous solutions is not a problem, since according to Proposition \ref{Binom-General-Solution}, if one vector is expressed as a combination of vectors associated with the binomials in Theorem \ref{Teo-Eq-Surf-C4}, then  any vector associated with other solution of the same equation (including the minimal solution) can also be expressed in terms of vectors of this theorem.  
\end{proof}

Let us remember that a finitely generated semigroup $S$ induces a homomorphism of groups $\pi_{S} : \mathbb{Z}^p \to \mathbb{Z}^s$, whose kernel determines the ideal $I_S$ of the toric variety (see (\ref{nucleo})). Consider $B$ the set of vectors associated with the binomials in $\mathcal{G}$, that is, $$B=\{(-\lambda n, n, -1,0), (-\lambda r_1,r_1, h_0,-1), u_{k,j}, \ v_{k,j}; \ k=1, \ldots, q, \ j=1,\ldots,h_k\}$$ in which $u_{k,j}=(-\lambda a_{k,j},a_{k,j}, -d_{k,j},b_{k,j} )$ and $v_{k,j}=(-\lambda a_{k,j},a_{k,j}, d_{k,j},-b_{k,j} )$.

\vspace{0.2cm}
A vector $v=(v_1,v_2,v_3,v_4) \in \mathbb{Z}^4$ is called primitive if its components are coprime. In the following result, we show that the vectors in $B$ are primitive, this implies that the corresponding binomials in $\mathcal{G}$ are irreducible in $\mathbb{C}[X,Y,Z,W]$.
\begin{corollary}
	The set $B$ generates $Ker(\pi_S)$ as a $\mathbb{Z}$-module. Moreover, all vectors in $B$ are primitive.
\end{corollary}
\begin{proof}
	It follows from the proofs of Theorem \ref{Teo-Eq-Surf-C4}, Proposition \ref{Prop-No-Generators} and Proposition \ref{Prop-No-Generators-2} that $B$ generates $Ker(\pi_S)$ as a $\mathbb{Z}$-module.
	
	For the second part, note that the first two vectors in $B$ are clearly primitive. We now show that $d_{k,j}$ and $b_{k,j}$ are coprime for all $k,j$. To simplify the notation, we denote $d_{k,h_k}$ and $b_{k,h_k}$ by $d_k$ and $b_k$, respectively. Thus, $d_{k,j}=d_{k-1}\cdot j + d_{k-2}$ and $b_{k,j}=b_{k-1}\cdot j + b_{k-2}$. Recalling the continued fraction expansion $\frac{m}{n}=[h_0;h_1,\ldots,h_q]$ (see (\ref{Succ-Div-Meth})), observe that for each $k$, with $j=h_k$, the convergents of $\frac{m}{n}$ are given by  $\frac{d_0}{b_0}=h_0$, $\ \frac{d_1}{b_1}=\frac{h_0h_1+1}{h_1}=h_0+\frac{1}{h_1}=[h_0;h_1]$, $$\frac{d_2}{b_2}=\frac{(h_0h_1+1)h_2+h_0}{h_1h_2+1}=h_0+\frac{1}{h_1+\frac{1}{h_2}}=[h_0;h_1,h_2],$$ and so on, the $k$-th convergent is $\frac{d_{k}}{b_{k}}=[h_0;h_1,\ldots, h_k].$  By the properties of continued fractions convergents, we have that $d_k\cdot b_{k-1} - d_{k-1} \cdot b_k=(-1)^{k-1}$ which implies that $gcd(d_k,b_k)=1$. Furthermore,  $d_{k,j}$ and $b_{k,j}$ are also coprime, since $gcd(d_{k,j},b_{k,j})$ must divide any linear combination of them. In particular, it divides the relation $$\begin{array}{rcl}
		b_{k-1}d_{k,j}-d_{k-1}b_{k,j}& = & b_{k-1}(d_{k-1} \cdot j+d_{k-2})- d_{k-1}(b_{k-1} \cdot j+b_{k-2})\\
		&=& -(d_{k-1} \cdot b_{k-2}- d_{k-2} \cdot b_{k-1})=(-1)^{k-1}.
	\end{array}$$ Then, $gcd(d_{k,j},b_{k,j})=1$. Therefore $u_{k,j}$ and $v_{k,j}$ are primitive.
\end{proof}

\section{Conflit of interests}

The authors declare that they have no known competing financial interests or personal relationships that could have appeared to influence the work reported in this paper.

\begin{center}
\begin{tabular}{c}
T. M. Dalbelo \\
{\it thaisdalbelo@ufscar.br}\\
Departamento de Matem\'atica, 
Universidade Federal de S\~ao Carlos - UFSCar,
Brazil \vspace{0.2cm}\\
Rodrigues Hernandes, M. E. \\
{\it merhernandes$@$uem.br}\\
Departamento de Matem\'atica, Universidade Estadual de Maringá - UEM,
Brazil \vspace{0.2cm}\\
M. A. S. Ruas\\
maasruas@icmc.usp.br\\
Instituto de Ci\^encias Matem\'aticas e de
Computa\c c\~ao, Universidade de S\~ao Paulo - USP,
Brazil
\end{tabular}
\vspace{0.5cm}

\end{center}


\begin{thebibliography}{99}
	
\bibitem{BSR} A. M. Bigatti, R. Scala, and L. Robbiano, Computing toric ideals. J. Symbolic Computation 27, 351--365 (1999).

\bibitem{RGBT} J. R. Borges-Zampiva, B. Oréfice-Okamoto, G. Peñafort-Sanchis, J. N. Tomazella, Double points and image of reflection maps, arXiv:2312.06792v2, (2024).

\bibitem{DanielMartinez} E. Chávez Martínez and D. Duarte, On the zero locus of ideals defining the Nash blowup of toric surfaces. Comm. Algebra, 46 (3), 1048--1059 (2017). 

\bibitem{Cox} D. A. Cox, J. B. Little, and H. K. Schenck, Toric varieties, Graduate Studies in Mathematics 124, AMS, (2011). 

\bibitem{ArturoDaniel} D. Duarte, and A. E. Giles Flores, On the Lipschitz saturation of toric singularities, arXiv: 2310.03216, (2024). 

\bibitem{ES} D. Eisenbud, and  B. Sturmfels, Binomial ideals, Duke Math. J., 84 (1), 1--45 (1996). 

\bibitem{F}  W. Fulton, Introduction to toric varieties, Annals of Mathematics Studies, vol. 131, Princeton University Press, Princeton (1993).

\bibitem{ThesisGaffney}  T. J. Gaffney, Properties of finitely determined germs, Ph. D. Thesis, Brandeis University, 107 (1975).

\bibitem{BallesterosMond} D. Mond, and J. J. Nu\~{n}o-Ballesteros, Singularities of mappings - the local behaviour of smooth and complex analytic mappings, Grundlehren der mathematischen Wissenschaften 357. A series of comprehensive studies in Mathematics,
Springer (2020).

\bibitem{O}  T. Oda, Convex bodies and algebraic geometry - An introduction to the theory of toric varieties, A series of Modern Surveys in Mathematics, Springer-Verlag, Berlin Heidelberg (2012).

\bibitem{Rie2}  O. Riemenschneider, Zweidimensionale Quotientensingularitäten: Gleichungen und Syzygien, Arch. Math. 37, 406--417 (1981). 

\bibitem{EleniceCidinha1}  M. E. Rodrigues Hernandes and M. A. S. Ruas, Parametrized monomial surfaces in $4$-space, Quart. J. Math. 70, no.2, 473--485 (2019). 

\bibitem{EleniceCidinha2}  M. E. Rodrigues Hernandes and M. A. S. Ruas, Join operation and $\mathcal{A}$-finite map-germs, Quart. J. Math., 74 (4), 1415--1434 (2023).

\bibitem{Wall} C. T. C. Wall, Finite determinacy of smooth map-germs. Bull. London Math. Soc.,
13 (6), 481--539 (1981).

\end{thebibliography}
\end{document}